\documentclass[12pt]{article}

 \usepackage{geometry}
\geometry{a4paper,left=2.5cm,right=2.5cm,top=2cm,bottom=2cm}
%
\usepackage{mathrsfs}
\usepackage{graphicx, amsmath,latexsym,amsfonts,titlesec,amscd,epsfig,cite,color,
                                                                                   }
\usepackage{amssymb}
 \usepackage{autobreak}       
 \allowdisplaybreaks[4]   

\usepackage{indentfirst} 
\usepackage[amsmath,thmmarks]{ntheorem} 

\usepackage{extarrows}  
 \usepackage[sort&compress,numbers]{natbib}  
 \usepackage[bookmarks=true]{hyperref}
 \hypersetup{
    colorlinks=true,      
    citecolor=magenta,    
}
  \usepackage[capitalise]{cleveref}

 \usepackage[shortlabels]{enumitem}  
 \usepackage{lineno}          
\modulolinenumbers[5]  
%
%
\theoremstyle{definition} 
\theoremheaderfont{\normalfont\rmfamily\bf}
\theorembodyfont{\normalfont\rm } \theoremindent0em
\newtheorem{theorem}{\indent
                  Theorem}[section]
    \newtheorem{lemma}{\indent  Lemma} [section]
\newtheorem{corollary}{\indent Corollary} [section]

    \newtheorem{definition}{\indent  Definition} [section]
        \newtheorem{remark}{\indent  Remark}  

%

\theoremstyle{nonumberplain} 
\theoremheaderfont{\bf\rmfamily} \theorembodyfont{\normalfont \rm }
\theoremindent0em 
\theoremsymbol{\hfill $\Box$} 
\newtheorem{proof}{\indent Proof}


\def\loc{{\mathrm{loc}}}
\DeclareMathOperator*{\essinf}{ess\, inf}
\DeclareMathOperator*{\esssup}{ess\, sup}
\def\dint{\displaystyle\int}

\DeclareMathOperator*{\supp}{supp}


\numberwithin{equation}{section} 

\newcommand{\mathe}{\mathrm{e}}

\DeclareMathOperator*{\dini}{Dini}
\def\corrauth{\footnote{Corresponding author.
 }
 \stepcounter{footnote}
 }

\def\affil#1{${}^{#1}$}
\def\comma{$^{\textrm{,}}$}
\makeatletter
\renewcommand*{\author}[2][?]{
     \gdef\shortauthor{?}
     \gdef\@author{#2}
   \ifthenelse{\equal{#1}{?}}
     { \gdef\shortauthor{\let\comma=\empty \let\corrauth=\empty \renewcommand{\affil}[1]{} #2} }
     { \gdef\shortauthor{#1}}
}
\def\@setauthor{\begin{center}
  \sc  \@author
    \end{center}%
}
\makeatother

\title{\bf\large   Multilinear  fractional Calder\'{o}n-Zygmund operators with\\ Dini type kernel}

\author[J. WU and P. Zhang] {Jianglong Wu  and Pu Zhang\corrauth 
\\ 
{ 
\footnotesize\it Department of Mathematics, Mudanjiang Normal University, Mudanjiang, 157011, China}
}

\date{}

\begin{document}


\maketitle


\vspace{-4em}
\renewcommand\abstractname{}
\begin{abstract}
\setlength{\parindent}{0pt}\setlength{\parskip}{1.5explus0.5exminus0.2ex}%
\noindent
{\textbf{Abstract}:} In this paper, the main purpose   is to consider    a number of results concerning boundedness of multilinear  fractional Calder\'{o}n-Zygmund operators    with  kernels of mild regularity.
Let  $T_{\alpha}$ be a  multilinear fractional Calder\'{o}n-Zygmund operators of type $\omega(t)$ with  $\omega$ being nondecreasing and $\omega \in \dini(1)$.
The end-point weak-type estimates for multilinear operator $T_{\alpha}$ are obtained. Moreover, some boundedness properties of the multilinear  fractional  operators are also established on variable exponent Lebesgue spaces.

\smallskip
 {\bf Keywords:}  \    Calder\'{o}n-Zygmund operators; multilinear fractional  integral; variable exponent Lebesgue space

 { \bf AMS(2020) Subject Classification:}  \  42B20; 42B35   \ \
\end{abstract}

\section{Introduction and main results}

The multilinear Calder\'{o}n-Zygmund theory was first studied by Coifman and Meyer in \cite{coifman1975commutators,coifman1978commutateurs}. This theory was then further investigated by many authors in the last few decades,  see for example \cite{grafakos2002multilinear,lerner2009new,grafakos2014multilinear}, for the theory of multilinear Calder\'{o}n-Zygmund  operators with kernels satisfying the standard estimates. 
And multilinear fractional integral operators were first studied by Grafakos \cite{grafakos1992multilinear}, followed by Kenig and
Stein \cite{kenig1999multilinear} et al.
The importance of fractional integral operators is due to the fact that they 
have been widely used in various areas, such as potential analysis, harmonic analysis, and partial differential equations and so on\cite{wang2019multilinear}.
 The fractional Calder\'{o}n-Zygmund operators and related problems have been studied by a number of authors, see, for instance,  Lin and Lu \cite{lin2007boundedness},
Cruz-Uribe, Moen  and Van Nguyen \cite{cruz2019multilinear}, Wang and  Xu \cite{wang2019multilinear}, et al.


In 2009, Moen \cite{moen2009weighted} presented a weighted theory for multilinear fractional integral operators and maximal functions.
In 2014, Lu and Zhang \cite{lu2014multilinear} established  some boundedness results of  multilinear Calder\'{o}n-Zygmund operator  of type $\omega(t)$  and its commutators in variable Lebesgue spaces.
And recently,
Dalmasso et al. \cite{dalmasso2018effect}  proved boundedness results for integral operators of fractional type and their higher order commutators between weighted spaces, and the  kernels of such operators satisfy certain size condition and a Lipschitz type regularity, and the symbol of the commutator belongs to a Lipschitz class. They  also deal with commutators of fractional type operators with less regular kernels satisfying a H\"{o}rmander's type inequality.

Inspired by the works above, the main  goal of this paper is to consider  a number of results concerning boundedness of multilinear  fractional Calder\'{o}n-Zygmund operators $T_{\alpha}$   with  kernels of mild regularity. 
In particular, the corresponding conclusions can be found  in \cite{lu2014multilinear} when  $\alpha= 0$.

In what follows, let   $\mathbb{R}^{n}$ be an $n$-dimensional Euclidean space and $(\mathbb{R}^{n})^{m}= \mathbb{R}^{n} \times \cdots \times \mathbb{R}^{n}$ be an $m$-fold product space ($m\in  \mathbb{N}$).
We denote by $\mathscr{S}(\mathbb{R}^{n})$ the space of all Schwartz functions on  $\mathbb{R}^{n}$ and by $\mathscr{S}'(\mathbb{R}^{n})$ its dual space, the set of all tempered distributions on  $\mathbb{R}^{n}$.
And let $ C_{c}^{\infty}(\mathbb{R}^{n})$  denote the set of smooth functions with compact support in $\mathbb{R}^{n}$.


In \cite{yabuta1985generalizations}, in order to facilitate the study of  certain classes of pseudodifferential operators, Yabuta introduced certain  $\omega$-type Calder\'{o}n-Zygmund operators.
Let $\omega(t): [0,\infty) \to [0,\infty)$ be a non-negative and non-decreasing function with $0<\omega(1)<\infty$. For $a>0$,
\begin{enumerate}[leftmargin=2em,label=(\arabic*),itemindent=1.5em]  
\item  $\omega$ is said to satisfy the $\dini(a)$ condition, i.e. $\omega \in \dini(a)$,  if
\begin{align*}\label{equ:dini-condition-a}
|\omega|_{\dini(a)} &\triangleq \int_{0}^{1}  \omega^{a}(t)   \frac{\mathrm{d}t}{t} <\infty.
\end{align*}
\item      $\omega$ is said to satisfy the $\log^{m}-\dini(a)$ condition   if the following inequality holds
\begin{align*}\label{equ:dini-condition-am}
 \int_{0}^{1}  \omega^{a}(t) (1+\log t^{-1})^{m}   \frac{\mathrm{d}t}{t} <\infty,
\end{align*}
where $m\in \mathbb{Z}^{+}$.
\end{enumerate}

It is easy to check that the $\log^{m}-\dini(a)$ condition is stronger than the $\dini(a)$ condition,   and if $0 < a_{1} < a_{2} < \infty$, then $\dini(a_{1}) \subset \dini(a_{2})$.

In particular,  if   $\omega \in \dini(1)$, then
\begin{align*}
 \sum_{j=0}^{\infty} \omega(2^{-j}) &\approx \int_{0}^{1}  \omega(t)   \frac{\mathrm{d}t}{t} <\infty.
\end{align*}
And if $\omega\in \log-\dini(1)$,  that is
\begin{align*}\label{equ:dini-condition-m}
\dint_{0}^{1}\omega(t) (1+\log t^{-1})  \frac{\mathrm{d}t}{t} &< \infty,
\end{align*}
then $\omega \in \dini(1)$ and
\begin{align*}
 \sum_{k=1}^{\infty} k \omega(2^{-k}) &\approx \int_{0}^{1}  \omega(t) (1+\log t^{-1})  \frac{\mathrm{d}t}{t} <\infty.
\end{align*}


The following gives the definition of the  multilinear  fractional Calder\'{o}n-Zygmund  operators  of type $\omega(t)$.

 \begin{definition}\label{def:w-CZK}  
 Let   $0 \le \alpha < mn$.
 A locally integrable function $K_{\alpha}(x, y_{1},\dots , y_{m})$, defined away from the diagonal $x= y_{1} = \cdots = y_{m} $ in
$(\mathbb{R}^{n})^{m+1}$, is called an $m$-linear  Calder\'{o}n-Zygmund kernel  of type $\omega(t)$, if there exists a constant $A > 0$ such that the following conditions are satisfied.
\begin{enumerate}[leftmargin=2em,label=(\arabic*),itemindent=1.5em]  
\item  Size estimate:
 \begin{align} \label{equ:w-CZK-frac-size-estimate}
| K_{\alpha}(x,\vec{y})| &\le \dfrac{A}{(|x-y_{1}|+\cdots+|x-y_{m}|)^{mn-\alpha}}
\end{align}
for  all  $(x,y_{1},\dots , y_{m})\in (\mathbb{R}^{n})^{m+1}$ with $x\neq y_{j}$ for some $j\in\{1,2,\dots,m\}$.
\item    Smoothness estimate:  assume that  for each $j\in\{1,2,\dots,m\}$, there are regularity conditions
 \begin{align} \label{equ:w-CZK-frac-regularity-1}
|K_{\alpha}(x,\vec{y}) -K_{\alpha}(x',\vec{y}) |  &\le \dfrac{A }{\Big (\sum\limits_{j=1}^{m}|x-y_{j}| \Big)^{mn-\alpha}} \omega \Big( \frac{|x-x'|}{ |x-y_{1}|+\cdots+|x-y_{m}|} \Big)
\end{align}
whenever $|x-x'| \le \dfrac{1}{2}   \max\limits_{1\le j\le m} |x- y_{j}|$.
And  for each fixed $j$ with $1\le j \le m$,
 \begin{align} \label{equ:w-CZK-frac-regularity-2}
\begin{aligned}
|K_{\alpha}(x,y_{1},\dots,y_{j}, \dots , y_{m}) &-K_{\alpha}(x,y_{1},\dots,y_{j}', \dots , y_{m}) |  \\
 & \le \dfrac{A }{\Big (\sum\limits_{j=1}^{m}|x-y_{j}| \Big)^{mn-\alpha}} \omega \Big( \frac{|y_{j}-y_{j}'|}{ |x-y_{1}|+\cdots+|x-y_{m}|} \Big)
\end{aligned}
 \end{align}
whenever $|y_{j}-y_{j}'| \le \dfrac{1}{2}  \max\limits_{1\le j\le m} |x- y_{j}|$.
\end{enumerate}
\end{definition}

We say  $T_{\alpha}:\mathscr{S}(\mathbb{R}^{n})\times\cdots \times\mathscr{S}(\mathbb{R}^{n}) \to \mathscr{S}'(\mathbb{R}^{n})$
is an  $m$-linear fractional singular integral operator with an  $m$-linear fractional Calder\'{o}n-Zygmund kernel  of type $\omega(t)$, $K_{\alpha}(x,y_{1},\dots , y_{m})$, if
\begin{align} \label{equ:wm-frac-CZO}
T_{\alpha} (\vec{f})(x)  &=
 \dint_{(\mathbb{R}^{n})^{m}} K_{\alpha}(x,y_{1},\dots , y_{m}) \prod_{i=1}^{m}f_{i}(y_{i}) d\vec{y}
\end{align}
whenever $x\notin \bigcap_{j=1}^{m} \supp f_{j}$ and each $f_{j} \in C_{c}^{\infty}(\mathbb{R}^{n}), j=1,\dots,m$.

 \begin{definition}\label{def:wm-frac-CZO}
 Let $0<\alpha<mn $ and $T_{\alpha} $   be an $m$-linear fractional singular integral operator defined by \labelcref{equ:wm-frac-CZO}. Suppose that $1\le p_{_{1}},\dots, p_{_{m}} < \infty$ such that $1/p =  1/p_{_{1}}  + 1/p_{_{2}} +\cdots+1/p_{_{m}}$ and $1/q =  1/p  - \alpha/n >0$. Then $T_{\alpha} $ is called as an  $m$-linear  fractional Calder\'{o}n-Zygmund operator  of type $\omega$ (abbreviated to $m$-linear $\omega_{\alpha}$-CZO)   if the following conditions are satisfied:
\begin{enumerate}[leftmargin=2em,label=(\arabic*),itemindent=1.5em]  
\item   For some given numbers  $1<p_1,\dots,p_m<\infty$, 
    $T_{\alpha}$  maps  $L^{p_1}(\mathbb{R}^{n}) \times  \cdots\times L^{p_m}(\mathbb{R}^{n}) $ into $L^{q}(\mathbb{R}^{n})$.
\item     For some given numbers  $1\le p_1,\dots,p_m<\infty$ and $\min\limits_{1\le j\le m} \{p_{j}\}=1$, 
    $T_{\alpha}$  maps  $L^{p_1}(\mathbb{R}^{n}) \times  \cdots\times L^{p_m}(\mathbb{R}^{n}) $ into $L^{q,\infty}(\mathbb{R}^{n})$.
\end{enumerate}
\end{definition}


\begin{remark}
\begin{enumerate}[leftmargin=2em,label=(\alph*),itemindent=1.5em]  
\item  Obviously, the $m$-linear $\omega_{\alpha}$-CZO is exactly the multilinear  Calder\'{o}n-Zygmund operator studied by Grafakos and Torres in \cite{grafakos2002multilinear} when $\alpha=0$ and $\omega(t)=t^{\varepsilon}$ for some $\varepsilon >0$.
\item    When $\alpha=0$, the $m$-linear $\omega_{\alpha}$-CZO is exactly the multilinear  Calder\'{o}n-Zygmund operator studied by Lu and Zhang in \cite{lu2014multilinear}.
\item     Using size estimate \eqref{equ:w-CZK-frac-size-estimate} with $\alpha=0$,  from the  proof of Lemma 2 in   \cite{grafakos2002multilinear}, the following condition can be obtained
\begin{equation}\label{inequ:(m-1)-CZK}
 \dint_{\mathbb{R}^{n}} \dfrac{1}{(|x-y_{1}|+\cdots+|x-y_{m}|)^{nm}} d y_{m}\le \frac{C}{(|x-y_{1}|+\cdots+|x-y_{m-1}|)^{n(m-1)}}.
\end{equation}
By symmetry, it is also true if we freeze any other variable in $K$ instead of $y_{m}$.
\end{enumerate}
\end{remark}

%

Throughout this paper, the letter $C$  always stands for a constant  independent of the main parameters involved and whose value may differ from line to line.
A cube $Q \subset\mathbb{R}^{n} $ always means a cube whose sides are parallel to the coordinate axes and denote its side length by $l(Q)$.
For some $t>0$, the  notation  $tQ$ stands   for  the cube with the same center as  $Q$ and with side length  $l(tQ)=t l(Q)$.
 Denote by $|S|$ the Lebesgue measure  and  by $\chi_{_{\scriptstyle S}}$ the characteristic function  for a measurable set  $S\subset\mathbb{R}^{n}$. $B(x,r)$ means the ball centered at $x$ and of radius $r$, and $B_{0}=B(0,1)$.
 $X \approx Y$ means there is a constant $C>0$ such that $C^{-1}Y \le X \le C Y$.
For any index $1< q(x)< \infty$, we denote by $q'(x)$ its conjugate index,
namely, $q'(x)=\frac{q(x)}{q(x)-1}$. And we will occasionally use the notational $\vec{f}=(f_{1},\dots , f_{m})$, $T(\vec{f})=T(f_{1},\dots , f_{m})$, $d\vec{y}=dy_{1}\cdots  dy_{m}$ and $(x,\vec{y})=(x,y_{1},\dots , y_{m})$ for convenience.
For a set $E$ and a positive integer $m$, we will use the notation $(E)^{m}=\underbrace{E\times \cdots \times E}_{m}$ sometimes.

\subsection{Boundedness of $m$-linear $\omega_{\alpha}$-CZO}

The first result for multilinear fractional operators $T_{\alpha}$ with multilinear fractional Calder\'{o}n-Zygmund kernel of type $\omega$  is the following end-point weak-type estimates on the product of Lebesgue spaces. And
the Calder\'{o}n-Zygmund decomposition is the key tool used  in obtaining endpoint weak type results for the $m$-linear $\omega_{\alpha}$-CZO.
\begin{theorem} \label{thm:dini-multi-fract-endpoint}
Let  $0<\alpha<mn $ and $T_{\alpha}$ be an   $m$-linear $\omega_{\alpha}$-CZO with $\omega \in \dini(1)$. Suppose that for some $1\le p_{_{1}},p_{_{2}},\dots,p_{_{m}} \le \infty$ and some $0<p,q<\infty$ with
\begin{align*}
 1/p =  1/p_{_{1}}  + 1/p_{_{2}} +\cdots+1/p_{_{m}},  &\qquad 1/q =  1/p  - \alpha/n,
\end{align*}
$T_{\alpha}$  maps $L^{p_1}(\mathbb{R}^{n}) \times  \cdots\times L^{p_m}(\mathbb{R}^{n}) $ into  $L^{q,\infty}(\mathbb{R}^{n})$.
Then  $T_{\alpha}$  can be extended to a bounded operator from the $m$-fold product  $L^{1}(\mathbb{R}^{n}) \times  \cdots\times L^{1}(\mathbb{R}^{n}) $ into  $L^{\frac{n}{mn-\alpha},\infty}(\mathbb{R}^{n})$.
Moreover, there is a constant $C _{m, n,|\omega|_{\dini(1)}}$ (that depends only on the parameters indicated) such that
\begin{align*}
 \|T_{\alpha}\|_{L^{1}\times  \cdots\times L^{1} \to L^{\frac{n}{mn-\alpha},\infty}}&\le  C _{m, n,|\omega|_{\dini(1)}} (A+ \|T_{\alpha}\|_{L^{p_1} \times  \cdots\times L^{p_m} \to L^{q,\infty}}),
\end{align*}
where $A$ is the constant appearing in \labelcref{equ:w-CZK-frac-size-estimate,equ:w-CZK-frac-regularity-1,equ:w-CZK-frac-regularity-2}.
\end{theorem}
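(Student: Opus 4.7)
My plan is to combine a multilinear Calder\'on--Zygmund decomposition with the Dini regularity of the kernel, in the spirit of the strategy Grafakos and Torres used for the non-fractional setting. By the multilinearity of $T_\alpha$ and homogeneity, we may assume $\|f_j\|_{L^1(\mathbb{R}^n)}=1$ for every $j$, and the task reduces to proving
\[
|\{x\in\mathbb R^n:|T_\alpha(\vec f)(x)|>\lambda\}|\le C\lambda^{-n/(mn-\alpha)}
\]
for every $\lambda>0$. Setting $\gamma:=\lambda^{n/(mn-\alpha)}$, I would begin by applying the Calder\'on--Zygmund decomposition at height $\gamma$ to each $f_j$, producing $f_j=g_j+b_j$ with $b_j=\sum_k b_{j,k}$, $\mathrm{supp}\,b_{j,k}\subset Q_{j,k}$, together with the standard bounds $\int b_{j,k}=0$, $\|b_{j,k}\|_{L^1}\le C\gamma|Q_{j,k}|$, $\sum_k|Q_{j,k}|\le C\gamma^{-1}$, $\|g_j\|_{L^\infty}\le C\gamma$, and $\|g_j\|_{L^1}\le C$. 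Defining the exceptional set $E^*:=\bigcup_{j,k} 8\sqrt n\,Q_{j,k}$, we see $|E^*|\le C\gamma^{-1}=C\lambda^{-n/(mn-\alpha)}$, so the problem is reduced to estimating the measure of $\{x\in(E^*)^c:|T_\alpha(\vec f)(x)|>\lambda\}$.

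Next I would expand $T_\alpha(\vec f)$ by multilinearity into $2^m$ pieces corresponding to choosing $g_j$ or $b_j$ in each slot. For the ``all-good'' term $T_\alpha(g_1,\dots,g_m)$ the hypothesized $L^{p_1}\times\cdots\times L^{p_m}\to L^{q,\infty}$ bound applies, and from $\|g_j\|_{L^{p_j}}^{p_j}\le\|g_j\|_{L^\infty}^{p_j-1}\|g_j\|_{L^1}\le C\gamma^{p_j-1}$ one obtains $\prod_j\|g_j\|_{L^{p_j}}\le C\gamma^{m-1/p}$; using $1/q=1/p-\alpha/n$ together with the choice of $\gamma$, a direct calculation shows that the resulting level set has measure at most $C\lambda^{-n/(mn-\alpha)}$.

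For each of the remaining $2^m-1$ pieces, at least one slot $j$ carries $b_j$. Exploiting the cancellation $\int b_{j,k}=0$, I would rewrite the corresponding piece at $x\in(E^*)^c$ as
\[
\int_{(\mathbb{R}^n)^m}\bigl[K_\alpha(x,\vec y)-K_\alpha(x,\vec y\,^{(k)})\bigr]\,b_{j,k}(y_j)\prod_{i\ne j}h_i(y_i)\,d\vec y,
\]
where $\vec y\,^{(k)}$ denotes $\vec y$ with $y_j$ replaced by the center $c_{j,k}$ of $Q_{j,k}$ and each $h_i$ is either $g_i$ or $b_i$. Since $x$ lies outside $8\sqrt n\,Q_{j,k}$, the separation $|x-c_{j,k}|\ge c\, l(Q_{j,k})$ allows one to apply the smoothness hypothesis \labelcref{equ:w-CZK-frac-regularity-2}, bounding the kernel difference by
\[
\frac{A\,\omega\!\bigl(l(Q_{j,k})/(|x-c_{j,k}|+\sum_{i\ne j}|x-y_i|)\bigr)}{(|x-c_{j,k}|+\sum_{i\ne j}|x-y_i|)^{mn-\alpha}}.
\]

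To conclude, I would integrate in $x$ over $(E^*)^c$ and in the remaining $y_i$ (collapsing the $m-1$ non-distinguished integrals with the aid of \labelcref{inequ:(m-1)-CZK}, or repeating the cancellation trick when several slots are bad), which reduces the matter to a dyadic sum of the form $\sum_{s\ge0}\omega(2^{-s})$. This series converges precisely because $\omega\in\dini(1)$, so after summing over $k$ using $\sum_k|Q_{j,k}|\le C\gamma^{-1}$, a Chebyshev-type argument yields the required bound $C\lambda^{-n/(mn-\alpha)}$ on $(E^*)^c$ for each of the mixed terms. I expect this last bookkeeping step to be the main technical obstacle: one must track how the fractional exponent $mn-\alpha$ is distributed between the distinguished variable $x-c_{j,k}$ and the remaining $m-1$ arguments, so that every one of the $2^m-1$ mixed terms produces the same power of $\lambda$ irrespective of how many ``bad'' slots it contains, and so that the interaction between the Dini regularity and the fractional size bound \labelcref{equ:w-CZK-frac-size-estimate} remains compatible with the scaling $\gamma=\lambda^{n/(mn-\alpha)}$.
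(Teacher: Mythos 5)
Your overall plan coincides with the paper's proof: Calder\'on--Zygmund decomposition of each $f_j$, multilinear expansion into $2^m$ terms, the weak-type hypothesis for the all-good term, and for the mixed terms the mean-zero property of $b_{j,k}$ combined with the regularity estimate \eqref{equ:w-CZK-frac-regularity-2}, dyadic shells, the collapse inequality \eqref{inequ:(m-1)-CZK}, and the convergent Dini series $\sum_s\omega(2^{-s})$. So the route is essentially identical.

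There is one genuine gap, though it is a matter of bookkeeping rather than of ideas: by fixing the CZ height as $\gamma=\lambda^{n/(mn-\alpha)}$ from the outset, your estimates for the three types of contributions — exceptional set, all-good term, and mixed terms — come out respectively as $C\lambda^{-n/(mn-\alpha)}$, $CB^{q}\lambda^{-n/(mn-\alpha)}$, and $CA\lambda^{-n/(mn-\alpha)}$, where $B=\|T_{\alpha}\|_{L^{p_1}\times\cdots\times L^{p_m}\to L^{q,\infty}}$. Summing gives $|\{|T_\alpha(\vec f)|>\lambda\}|\le C(1+A+B^{q})\lambda^{-n/(mn-\alpha)}$, so the resulting operator norm scales like $(1+A+B^{q})^{(mn-\alpha)/n}$, which is \emph{not} the linear form $C_{m,n,|\omega|_{\dini(1)}}(A+B)$ asserted in the ``Moreover'' clause. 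The paper avoids this by keeping the CZ height as $(\lambda\gamma)^{n/(mn-\alpha)}$ with $\gamma$ a free parameter, running the whole argument with the three parts scaling differently in $\gamma$, and only at the very end choosing $\gamma=(A+B)^{-1}$ so that every term is dominated by $C\bigl((A+B)/\lambda\bigr)^{n/(mn-\alpha)}$. Equivalently, you could rescale at the start, replacing $T_\alpha$ by $(A+B)^{-1}T_\alpha$ (which has kernel constant and weak-type constant both $\le 1$), run your argument to get a bound with absolute constant, and scale back. As written, your argument proves the qualitative extension to $L^{1}\times\cdots\times L^{1}\to L^{n/(mn-\alpha),\infty}$ but not the stated dependence of the operator norm on $A+B$.
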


\begin{remark}
\begin{enumerate}[leftmargin=2em,label=(\roman*),itemindent=1.5em]  
\item  When $\alpha=0$,  \cref{thm:dini-multi-fract-endpoint}  was proved in \cite{lu2014multilinear}.
\item   When $\alpha=0$ and $\omega(t) = t^{\varepsilon}$ for some $\varepsilon>0$,  \cref{thm:dini-multi-fract-endpoint}  was proved in \cite{grafakos2002multilinear}.
\item   For the bilinear case, \cref{thm:dini-multi-fract-endpoint} was proved in \cite{maldonado2009weighted} when $\alpha=0$, $\omega$ is concave and $\omega \in \dini(1/2)$.
\end{enumerate}
\end{remark}

To state the weighted norm inequalities for the multilinear  fractional Calder\'{o}n-Zygmund  operators of type $\omega$, we first recall some notation and definition on weights.
The class of $A_{\vec{P}}$ can be found in \cite{lerner2009new}, and the class of $A_{\vec{P},q}$ can be found  in \cite{moen2009weighted,chen2010weighted}.
\begin{definition}[Multiple weights]    \
For $m$ exponents $ p_{j}$,  let $\vec{P} = (p_{1},\dots ,p_{m})$ and $\frac{1}{p}= \frac{1}{p_{1}}+ \cdots + \frac{1}{p_{m}}$ with $1\le p_{j} < \infty~ (j = 1,\dots,m)$.  Given  $\vec{w} = (w_{1},\dots ,w_{m})$ with nonnegative function $w_{1},\dots ,w_{m} \in \mathbb{R}^{n}$.
\begin{enumerate}[ label=(\arabic*),itemindent=1em] 
\item  (Class of $A_{\vec{P}}$)  We say that  $\vec{w}$ satisfies the  $A_{\vec{P}}$ condition, i.e. $\vec{w} \in A_{\vec{P}}$, if
\begin{align*}
\sup_{Q}   \Big(\frac{1}{|Q|} \int_{Q}  u_{\vec{w}}(x)  dx  \Big)^{1/p}  \prod_{j=1}^{m} \Big(\frac{1}{|Q|} \int_{Q}  \big( w_{j} (x) \big)^{1-p'_{j}}   dx \Big)^{ 1/p'_{j}} &<\infty,
\end{align*}
where the supremum is taken over all cubes $Q \subset \mathbb{R}^{n}$,  $u_{\vec{w}} = \prod_{j=1}^{m}  w_{j}^{p/p_{j}}$,   and
$ \Big(\frac{1}{|Q|} \int_{Q}  \big( w_{j} (x) \big)^{1-p'_{j}}   dx \Big)^{ 1/p'_{j}}$
in the case $p_{j} = 1$   is understood as $(\inf_{Q} w_{j})^{-1}$.
\item  (Class of $A_{\vec{P},q}$)\ Let $q$  be a number $1/m < p \le q < \infty$. We say that  $\vec{w}$ satisfies the  $A_{\vec{P},q}$ condition if
\begin{align*}
\sup_{Q}   \Big(\frac{1}{|Q|} \int_{Q} \big(v_{\vec{w}}(x)\big)^{q} dx  \Big)^{1/q}  \prod_{j=1}^{m} \Big(\frac{1}{|Q|} \int_{Q}  \big( w_{j} (x)  \big)^{-p'_{j}}   dx \Big)^{ 1/p'_{j}} &<\infty,
\end{align*}
where the supremum is taken over all cubes $Q$ in $\mathbb{R}^{n}$, $q>0$, $v_{\vec{w}} = \prod_{j=1}^{m}  w_{j}$,  and  $\Big(\frac{1}{|Q|} \int_{Q}  \big( w_{j} (x)  \big)^{-p'_{j}}   dx \Big)^{ 1/p'_{j}}$    is understood as $(\inf_{Q} w_{j})^{-1}$ when $p_{j} = 1$.
\end{enumerate}
\end{definition}

In addition, For $0 < p < \infty$ and $w \in A_{\infty}$, denote by $L^{p} (w)= L^{p} (\mathbb{R}^{n},w)$ the collection of all functions $f$ satisfying
\begin{align*}
\|f\|_{L^{p} (w)} &= \Big( \dint_{\mathbb{R}^{n}} |f(x)|^{p} w(x) \mathrm{d}x \Big)^{1/p} < \infty.
\end{align*}
And, denote by $L^{p,\infty} (w)= L^{p,\infty} (\mathbb{R}^{n},w)$  the weak space with norm
\begin{align*}
\|f\|_{L^{p,\infty} (w)} &= \sup_{t>0} t w\big(\{x\in\mathbb{R}^{n}: |f(x)|>t\} \big)^{1/p},
\end{align*}
where $w(E)=\int_{E} w(x) \mathrm{d}x$  for a measurable set $E \subset \mathbb{R}^{n}$.

Now, we state the multiple-weighted norm inequalities and weak-type estimates for the multilinear  fractional Calder\'{o}n-Zygmund  operators of type $\omega$.

\begin{theorem} \label{thm:dini-multi-fract-czo-weight}
Let  $0<\alpha<mn $ and $T_{\alpha}$ be an   $m$-linear $\omega_{\alpha}$-CZO with $\omega \in \dini(1)$. Given  $\vec{P} = (p_{1},\dots ,p_{m})$ and $\frac{1}{p}= \frac{1}{p_{1}}+ \cdots + \frac{1}{p_{m}}$ with $1\le p_{j} < \infty~ (j = 1,\dots,m)$.
Suppose that 
$1/q =  1/p  - \alpha/n >0$ and $\vec{w} \in A_{\vec{P},q}$.
\begin{enumerate}[leftmargin=2em,label=(\roman*),itemindent=1.5em]  
\item  If   $1< p_{_{j}} < \infty$ for all $j=1,\dots,m$, then
\begin{align*}
  \|T_{\alpha}(\vec{f})\|_{L^{q}(v_{\vec{w}}^{q})}  &\le C  \prod_{j=1}^{m}\|f_{j}\|_{L^{p_{j}}(w_{j}^{p_{j}})}.
\end{align*}
\item  If  $1 \le p_{_{j}} < \infty$ for all $j=1,\dots,m$, and at least one $p_{_{j}} =1$ for some $j=1,\dots,m$, then
\begin{align*}
 \|T_{\alpha}(\vec{f})\|_{L^{q,\infty}(v_{\vec{w}}^{q})}  &\le  C \prod_{j=1}^{m}\|f_{j}\|_{L^{p_{j}}(w_{j}^{p_{j}})}.
\end{align*}
\end{enumerate}
\end{theorem}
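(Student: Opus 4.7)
The plan is to reduce both parts to two ingredients: a pointwise sharp-maximal control of $T_{\alpha}$ by the $m$-linear fractional maximal operator
\begin{align*}
\mathcal{M}_{\alpha}(\vec{f})(x) &= \sup_{Q\ni x}|Q|^{\alpha/n}\prod_{j=1}^{m}\frac{1}{|Q|}\int_{Q}|f_{j}(y_{j})|\,dy_{j},
\end{align*}
together with the weighted theory of Moen \cite{moen2009weighted} for $\mathcal{M}_{\alpha}$ under the $A_{\vec{P},q}$ condition. The key pointwise estimate I aim to establish is
\begin{align*}
M^{\#}_{\delta}\bigl(T_{\alpha}(\vec{f})\bigr)(x) &\le C\,\mathcal{M}_{\alpha}(\vec{f})(x),
\end{align*}
valid for some fixed $0<\delta<1/m$, where $M^{\#}_{\delta}g=\bigl(M^{\#}(|g|^{\delta})\bigr)^{1/\delta}$ denotes the fractional sharp maximal function.

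Given this pointwise bound, part (i) follows from the weighted Fefferman--Stein inequality: since $\vec{w}\in A_{\vec{P},q}$ implies $v_{\vec{w}}^{q}\in A_{\infty}$, one obtains
$\|T_{\alpha}(\vec{f})\|_{L^{q}(v_{\vec{w}}^{q})}\le C\|M^{\#}_{\delta}(T_{\alpha}(\vec{f}))\|_{L^{q}(v_{\vec{w}}^{q})}\le C\|\mathcal{M}_{\alpha}(\vec{f})\|_{L^{q}(v_{\vec{w}}^{q})}\le C\prod_{j=1}^{m}\|f_{j}\|_{L^{p_{j}}(w_{j}^{p_{j}})}$,
the last step being Moen's strong bound. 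Part (ii) proceeds analogously, using the weak-type Fefferman--Stein inequality together with Moen's weak-type bound for $\mathcal{M}_{\alpha}$ at the endpoint. A standard truncation on $\vec{f}$ (to bounded, compactly supported data followed by monotone convergence) secures the a priori finiteness that both Fefferman--Stein inequalities require.

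To establish the sharp function estimate I fix $x\in\mathbb{R}^{n}$ and a cube $Q\ni x$, set $Q^{\ast}=8\sqrt{n}\,Q$, and decompose $f_{j}=f_{j}^{0}+f_{j}^{\infty}$ with $f_{j}^{0}=f_{j}\chi_{Q^{\ast}}$. Expanding $\prod_{j} f_{j}$ produces $2^{m}$ terms indexed by $\vec{\sigma}\in\{0,\infty\}^{m}$. Using the equivalent form $M^{\#}_{\delta}g(x)\approx\sup_{Q\ni x}\inf_{c}\bigl(\tfrac{1}{|Q|}\int_{Q}|g(y)-c|^{\delta}dy\bigr)^{1/\delta}$ and the subadditivity of $t\mapsto t^{\delta}$, I choose $c=T_{\alpha}(f_{1}^{\infty},\dots,f_{m}^{\infty})(z_{Q})$ for $z_{Q}$ the center of $Q$, and subtract $0$ for the remaining $2^{m}-1$ terms. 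The all-zero piece is handled by Kolmogorov's inequality combined with the endpoint weak-type bound from \cref{thm:dini-multi-fract-endpoint}, which after simple size manipulations yields a bound of $C|Q^{\ast}|^{\alpha/n}\prod_{j}\frac{1}{|Q^{\ast}|}\int_{Q^{\ast}}|f_{j}|\le C\,\mathcal{M}_{\alpha}(\vec{f})(x)$. For each mixed term I apply the size estimate \eqref{equ:w-CZK-frac-size-estimate} in the variables carrying $f_{j}^{0}$ and the regularity estimates \eqref{equ:w-CZK-frac-regularity-1} or \eqref{equ:w-CZK-frac-regularity-2} in the variables carrying $f_{j}^{\infty}$; decomposing $\mathbb{R}^{n}\setminus Q^{\ast}$ into dyadic annuli $2^{k+1}Q^{\ast}\setminus 2^{k}Q^{\ast}$, each annulus contributes a factor $\omega(C\cdot 2^{-k})$, and the hypothesis $\omega\in\dini(1)$ gives $\sum_{k\ge 0}\omega(C\cdot 2^{-k})\le C|\omega|_{\dini(1)}$, producing the desired bound by $\mathcal{M}_{\alpha}(\vec{f})(x)$.

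The main obstacle will be the careful bookkeeping in the mixed terms: the regularity estimate must be invoked in exactly those variables that carry $f_{j}^{\infty}$, and after the dyadic decomposition one must verify that the homogeneity of degree $(mn-\alpha)$ balances correctly against $m$ copies of $\frac{1}{|2^{k}Q^{\ast}|}\int_{2^{k}Q^{\ast}}|f_{j}|$ together with the fractional volume factor $|2^{k}Q^{\ast}|^{\alpha/n}$, so that a uniform geometric factor remains available in each annulus to be absorbed into the Dini sum. A secondary technical point, already mentioned, is the a priori finiteness condition for the weighted Fefferman--Stein inequalities, which is handled by the standard truncation argument.
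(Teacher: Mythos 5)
Your overall architecture — pointwise sharp-maximal bound $M_{\delta}^{\sharp}(T_{\alpha}\vec f)\lesssim\mathcal M_{\alpha}\vec f$, weighted Fefferman--Stein for $A_{\infty}$ weights, Moen's weighted theory for $\mathcal M_{\alpha}$ under $A_{\vec P,q}$, and a reduction to bounded compactly supported data — is exactly the paper's. The gap is in your choice of the constant $c$ inside the sharp function. You set $c=T_{\alpha}(f_{1}^{\infty},\dots,f_{m}^{\infty})(z_{Q})$ and ``subtract $0$'' from the remaining $2^{m}-1$ pieces; but then every \emph{mixed} term (at least one $\sigma_j=0$ and at least one $\sigma_i=\infty$) has to be estimated \emph{raw}, with no subtraction. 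The regularity hypotheses \eqref{equ:w-CZK-frac-regularity-1}--\eqref{equ:w-CZK-frac-regularity-2} control differences of kernel values, not a single kernel value; with nothing subtracted you only have the size estimate \eqref{equ:w-CZK-frac-size-estimate}, and the resulting annular sum looks like
\begin{align*}
\sum_{k\ge 0}\,|2^{k}Q|^{\alpha/n}\prod_{j=1}^{m}\frac{1}{|2^{k}Q|}\int_{2^{k}Q}|f_{j}|,
\end{align*}
which carries no geometric or Dini decay and diverges. Your phrase ``apply the regularity estimate in the variables carrying $f_j^{\infty}$'' is also misleading: there is nothing against which to difference in those variables once $c$ has been spent on the all-$\infty$ piece.

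The correct choice, and the one the paper makes, is
\begin{align*}
c \;=\; \sum_{(\rho_1,\dots,\rho_m)\in\rho} T_{\alpha}(f_1^{\rho_1},\dots,f_m^{\rho_m})(x),\qquad \rho=\{(\rho_1,\dots,\rho_m): \text{at least one }\rho_j\neq 0\},
\end{align*}
so that each mixed term appears as a difference $T_{\alpha}(f^{\rho})(z)-T_{\alpha}(f^{\rho})(x)$. Since the integration in such a term includes at least one variable ranging over $\mathbb{R}^{n}\setminus Q^{*}$, one has $\max_j|z-y_j|\ge c\,l(Q)\ge 2|z-x|$ for $z,x\in Q$, so the regularity estimate \eqref{equ:w-CZK-frac-regularity-1} in the \emph{first} (output) variable applies to $K_{\alpha}(z,\vec y)-K_{\alpha}(x,\vec y)$ and supplies the factor $\omega(C2^{-k})$ on the $k$-th annulus, which makes the Dini sum converge. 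Once the constant is corrected in this way, the rest of your argument (Kolmogorov's inequality for the all-$0$ piece with exponent $n/(mn-\alpha)$, the weighted sharp-function inequality, and \cref{lem.weight-multi-frac-max-I}) goes through and reproduces the paper's proof.
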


As a consequence of  the theorem above the following result is obtained. The proof is left to the reader since it is simple.
\begin{corollary} \label{cor:dini-multi-fract-czo}
Let  $0<\alpha<mn $ and $T_{\alpha}$ be an   $m$-linear $\omega_{\alpha}$-CZO with $\omega \in \dini(1)$. Suppose that 
$1/q =  1/p_{_{1}}  + 1/p_{_{2}} +\cdots+1/p_{_{m}} - \alpha/n >0$.
\begin{enumerate}[leftmargin=2em,label=(\roman*),itemindent=1.5em]  
\item  If   $1< p_{_{j}} < \infty$ for all $j=1,\dots,m$, then
\begin{align*}
  \|T_{\alpha}(\vec{f})\|_{L^{q}(\mathbb{R}^{n})}  &\le C  \prod_{j=1}^{m}\|f_{j}\|_{L^{p_{j}}(\mathbb{R}^{n})}.
\end{align*}
\item  If  $1 \le p_{_{j}} < \infty$ for all $j=1,\dots,m$, and at least one $p_{_{j}} =1$ for some $j=1,\dots,m$, then
\begin{align*}
 \|T_{\alpha}(\vec{f})\|_{L^{q,\infty}(\mathbb{R}^{n})}  &\le  C \prod_{j=1}^{m}\|f_{j}\|_{L^{p_{j}}(\mathbb{R}^{n})}.
\end{align*}
\end{enumerate}
\end{corollary}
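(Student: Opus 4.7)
The plan is to derive \cref{cor:dini-multi-fract-czo} as an immediate specialization of \cref{thm:dini-multi-fract-czo-weight} to the trivial weight vector $\vec{w} = (1,1,\ldots,1)$. The only things to verify are that this choice of $\vec{w}$ lies in $A_{\vec{P},q}$ and that the weighted Lebesgue norms appearing in the theorem collapse to the ordinary Lebesgue norms on $\mathbb{R}^n$.

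First I would check membership $\vec{w} \in A_{\vec{P},q}$. Taking $w_j \equiv 1$ for every $j$ yields $v_{\vec{w}} = \prod_{j=1}^{m} w_j \equiv 1$, so $\bigl(\frac{1}{|Q|}\int_Q (v_{\vec{w}})^q\,dx\bigr)^{1/q}=1$ on every cube $Q$. Each of the factors $\bigl(\frac{1}{|Q|}\int_Q (w_j)^{-p'_j}\,dx\bigr)^{1/p'_j}$ likewise equals $1$; in the endpoint case $p_j=1$ the corresponding factor is to be read as $(\inf_Q w_j)^{-1}=1$ per the definition of the class. Hence the supremum defining the $A_{\vec{P},q}$ constant equals $1<\infty$, which confirms $\vec{w} \in A_{\vec{P},q}$. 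Observe also that the side constraint $1/m < p \le q < \infty$ required by the definition is automatic from the hypotheses $1 \le p_j < \infty$ (so $p \ge 1/m$) and $\alpha > 0$ combined with $1/q = 1/p - \alpha/n > 0$ (so $q > p$).

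With this choice of weights, $v_{\vec{w}}^{\,q} \equiv 1$ and $w_j^{p_j} \equiv 1$, so $\|f_j\|_{L^{p_j}(w_j^{p_j})} = \|f_j\|_{L^{p_j}(\mathbb{R}^n)}$ and $\|T_{\alpha}(\vec{f})\|_{L^q(v_{\vec{w}}^{\,q})} = \|T_{\alpha}(\vec{f})\|_{L^q(\mathbb{R}^n)}$, with the analogous identity for the weak norm used in part (ii). Invoking part (i) of \cref{thm:dini-multi-fract-czo-weight} when all $p_j > 1$ and part (ii) when at least one $p_j = 1$ then delivers the two unweighted bounds claimed. There is essentially no obstacle here --- as the author notes, the argument is simple --- the only conceptual content being the routine observation that the constant weight satisfies the multilinear fractional Muckenhoupt condition.
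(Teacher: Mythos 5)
Your proposal is correct and is exactly the ``simple'' deduction the authors had in mind: the paper offers no written proof of \cref{cor:dini-multi-fract-czo}, merely stating that it follows as a consequence of \cref{thm:dini-multi-fract-czo-weight}, and specializing that theorem to $\vec{w}=(1,\dots,1)$ is the natural way to do so. Your verification that the constant weight lies in $A_{\vec{P},q}$ and that the weighted norms collapse to the Lebesgue norms is accurate. One small caveat worth flagging: you write that $1\le p_j<\infty$ gives $p\ge 1/m$, but the class $A_{\vec{P},q}$ is defined only under the strict inequality $1/m<p$, and equality $p=1/m$ does occur in part (ii) when every $p_j=1$. That boundary case is nevertheless covered directly by \cref{thm:dini-multi-fract-endpoint} (which gives $L^1\times\cdots\times L^1\to L^{n/(mn-\alpha),\infty}$), so the corollary still holds; it would be cleaner to either invoke that theorem for the all-ones case or note that the paper's definition implicitly admits $p=1/m$.
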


\begin{remark}
\begin{enumerate}[leftmargin=2em,label=(\roman*),itemindent=1.5em]  
\item  When $\alpha=0$,  \cref{thm:dini-multi-fract-czo-weight}  was proved in \cite{lu2014multilinear}.
\item   When $\alpha=0$ and $\omega(t) = t^{\varepsilon}$ for some $\varepsilon>0$,  \cref{thm:dini-multi-fract-czo-weight}  was proved in \cite{grafakos2002multilinear}.
\item  when $\alpha=0$, $\omega$ is concave and $\omega \in \dini(1/2)$, the first part of \cref{thm:dini-multi-fract-czo-weight}  was proved in   \cite{maldonado2009weighted}.
\end{enumerate}
\end{remark}

\subsection{On variable exponent Lebesgue spaces}

In this section, we will study the boundedness properties of  $m$-linear $\omega_{\alpha}$-CZO  with mild regularity on variable exponent Lebesgue spaces. We first recall some definitions and notations.


 \begin{definition} \label{def.variable-Lebesgue}
  Let ~$ q(\cdot): \mathbb{R}^{n}\to[1,\infty)$ be a measurable function.
\begin{enumerate}[leftmargin=2em,label=(\roman*),itemindent=1.5em]  
\item  The variable exponent Lebesgue spaces $L^{q(\cdot)}(\mathbb{R}^{n})$ is defined by
  $$ L^{q(\cdot)}(\mathbb{R}^{n})=\{f~ \mbox{is measurable function}:  F_{q}(f/\eta)<\infty ~\mbox{for some constant}~ \eta>0\}, $$
  where $F_{q}(f):=\int_{\mathbb{R}^{n}} |f(x)|^{q(x)} \mathrm{d}x$ is a convex functional modular. The Lebesgue space $L^{q(\cdot)}(\mathbb{R}^{n})$ is a Banach function space with respect to the Luxemburg type norm
  \begin{equation*}
   \|f\|_{L^{q(\cdot)}(\mathbb{R}^{n})}=\inf \Big\{ \eta>0:  F_{q}(f/\eta)=\int_{\mathbb{R}^{n}} \Big( \frac{|f(x)|}{\eta} \Big)^{q(x)} \mathrm{d}x \le 1 \Big\}.
\end{equation*}
\item    The space $L_{\loc}^{q(\cdot)}(\mathbb{R}^{n})$ is defined by
  $$ L_{\loc}^{q(\cdot)}(\mathbb{R}^{n})=\{f ~\mbox{is measurable}: f\in L^{q(\cdot)}(E) ~\mbox{for all compact  subsets}~ E\subset \mathbb{R}^{n}\}.  $$
  \item  The weighted Lebesgue space $L_{w}^{q(\cdot)}(\mathbb{R}^{n})$ is defined by as the set of all measurable functions for which $$\|f\|_{L^{q(\cdot)}_{w}(\mathbb{R}^{n})}=\|w f\|_{L^{q(\cdot)}(\mathbb{R}^{n})}<\infty.$$
\end{enumerate}
\end{definition}

Next we define some classes of variable  exponent functions. Given a function $f\in L_{\loc}^{1}(\mathbb{R}^{n})$, the Hardy-Littlewood maximal operator $M$ is defined by
$$Mf(x)= \sup_{Q\ni x} \dfrac{1}{|Q| } \int_{Q}  |f(y)| \mathrm{d}y. $$

\begin{definition} \label{def.variable-exponent}\ \
 \ Given a measurable function $q(\cdot)$ defined on $\mathbb{R}^{n}$. For $E\subset \mathbb{R}^{n}$, we write
$$
q_{-}(E):=\essinf_{x\in E} q(x),\ \
q_{+}(E):= \esssup_{x\in E} q(x),$$
and write $q_{-}(\mathbb{R}^{n}) = q_{-}$ and $q_{+}(\mathbb{R}^{n}) = q_{+}$ simply.

\begin{list}{}{}
\item[(i)]\   $q'_{-}=\essinf\limits_{x\in \mathbb{R}^{n}} q'(x)=\frac{q_{+}}{q_{+}-1},\ \ q'_{+}= \esssup\limits_{x\in \mathbb{R}^{n}} q'(x)=\frac{q_{-}}{q_{-}-1}.$

\item[(ii)]\ Denote by $\mathscr{P}_{0}(\mathbb{R}^{n})$
the set of all measurable functions $ q(\cdot): \mathbb{R}^{n}\to(0,\infty)$ such that
$$0< q_{-}\le q(x) \le q_{+}<\infty,\ \ x\in \mathbb{R}^{n}.$$

\item[(iii)]\ Denote by $\mathscr{P}_{1}(\mathbb{R}^{n})$
the set of all measurable functions $ q(\cdotp): \mathbb{R}^{n}\to[1,\infty)$ such that
$$1\le q_{-}\le q(x) \le q_{+}<\infty,\ \ x\in \mathbb{R}^{n}.$$

\item[(iv)]\ Denote by $\mathscr{P}(\mathbb{R}^{n})$ the set of all measurable functions $ q(\cdot): \mathbb{R}^{n}\to(1,\infty)$ such that
$$1< q_{-}\le q(x) \le q_{+}<\infty,\ \ x\in \mathbb{R}^{n}.$$

\item[(v)]\  The set $\mathscr{B}(\mathbb{R}^{n})$ consists of all  measurable functions  $q(\cdot)\in\mathscr{P}(\mathbb{R}^{n})$ satisfying that the Hardy-Littlewood maximal operator $M$ is bounded on $L^{q(\cdot)}(\mathbb{R}^{n})$.

\end{list}
\end{definition}

\begin{definition} [$\log$-H\"{o}lder continuity] \label{def.log-holder} \ \
 \ Let $q(\cdot)$ be a real-valued function on  $\mathbb{R}^{n}$.
\begin{list}{}{}
\item[(i)]\    Denote by  $\mathscr{C}^{\log}_{loc}(\mathbb{R}^{n})$
the set of all local  $\log$-H\"{o}lder continuous functions $q(\cdotp)$ which satisfies
 \begin{equation*}    
 |q(x)-q(y)| \le \frac{-C}{\ln(|x-y|)},  \ \ \  |x-y|\le 1/2,\ x,y \in \mathbb{R}^{n},
\end{equation*}
where $C$ denotes a universal positive constant that may differ from line to line, and  $C$ does not depend on $x, y$.
\item[(ii)]\  The set $\mathscr{C}^{\log}_{\infty}(\mathbb{R}^{n})$ consists of all  $\log$-H\"{o}lder continuous functions $ q(\cdot)$ at infinity   satisfies
\begin{equation*}
 |q(x)-q_{\infty}| \le \frac{C_{\infty}}{\ln(\mathe+|x|)},  \ \ \ x \in \mathbb{R}^{n},
\end{equation*}
 where $q_{\infty}=\lim\limits_{|x|\to \infty}q(x)$.
\item[(iii)]\  Denote by  $\mathscr{C}^{\log}(\mathbb{R}^{n}):=\mathscr{C}^{\log}_{loc}(\mathbb{R}^{n})\cap \mathscr{C}^{\log}_{\infty}(\mathbb{R}^{n})$ the set of all global $\log$-H\"{o}lder continuous functions $ q(\cdot)$.
\end{list}
\end{definition}

\begin{remark}
\begin{enumerate}[leftmargin=2em,label=(\alph*),itemindent=1.5em]  
\item  The $\mathscr{C}^{\log}_{\infty}(\mathbb{R}^{n})$ condition is equivalent to the uniform continuity condition
\begin{equation*}  
  |q(x)-q(y)| \le \frac{C}{\ln(\mathe+|x|)},  \ \ \  |y|\ge|x|,\ x,y \in \mathbb{R}^{n}.
\end{equation*}
The $\mathscr{C}^{\log}_{\infty}(\mathbb{R}^{n})$ condition was originally defined in this form in \cite{cruz2003maximal}.
\item   In what follows, we denote $\mathscr{C}^{\log}(\mathbb{R}^{n}) \cap\mathscr{P}(\mathbb{R}^{n})$ by  $\mathscr{P}^{\log}(\mathbb{R}^{n})$.
\end{enumerate}
\end{remark}

The theory of function spaces with variable exponent were first studied by Orlicz \cite{orlicz1931konjugierte}, and it has been intensely investigated in the past twenty years since some elementary properties were established by Kov{\'a}{\v{c}}ik  and R{\'a}kosn{\'\i}k in \cite{kovavcik1991spaces}, and  
because of its connection with the study of variational integrals and partial differential equations with non-standard growth conditions (see, for instance, \cite{acerbi2005gradient, harjulehto2008minimizers,sanchon2009entropy}). In 2003, Diening and R\r{u}\u{z}i\u{c}ka \cite{diening2003calderon} studied the Calder\'{o}n-Zygmund operators on variable exponent Lebesgue spaces and gave some applications to problems related to fluid dynamics. In 2006, by applying the theory of weighted norm inequalities and extrapolation, Cruz-Uribe et al. \cite{cruz2006theboundedness} showed that many classical operators in harmonic analysis are bounded on the variable exponent Lebesgue space. For more information on function spaces with variable exponent, we refer to \cite{diening2011lebesgue,cruz2013variable}.

For $m$-linear $\omega_{\alpha}$-CZO, we have the following result.
\begin{theorem} \label{thm:dini-multi-fract-czo-vari}
Let  $0<\alpha<mn $ and $T_{\alpha}$ be an   $m$-linear $\omega_{\alpha}$-CZO with $\omega \in \dini(1)$. Given  $\frac{1}{p(\cdot)}= \frac{1}{p_{1}(\cdot)}+ \cdots + \frac{1}{p_{m}(\cdot)}$ with $  p(\cdot), p_{j}(\cdot) \in \mathscr{P}^{\log}(\mathbb{R}^{n})~ (j = 1,\dots,m)$.
Suppose that $ 0< \frac{1}{q(\cdot)} =  \frac{1}{p(\cdot)}  - \frac{\alpha}{n}  <1$. Then there exists a positive constant $C$ such that
\begin{align*}
  \|T_{\alpha}(\vec{f})\|_{L^{q(\cdot)}(\mathbb{R}^{n} )}  &\le C  \prod_{j=1}^{m}\|f_{j}\|_{L^{p_{j}(\cdot)}(\mathbb{R}^{n})}.
\end{align*}
\end{theorem}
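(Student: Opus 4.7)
The plan is to derive the variable exponent estimate from the weighted estimates already established in Theorem \ref{thm:dini-multi-fract-czo-weight} by invoking a multilinear version of Rubio de Francia's extrapolation theorem on variable exponent Lebesgue spaces. Specifically, part (i) of Theorem \ref{thm:dini-multi-fract-czo-weight} furnishes, for every fixed exponent vector $\vec{P}_{0}=(p_{1,0},\dots,p_{m,0})$ with $1<p_{j,0}<\infty$ and $1/q_{0}=\sum_{j}1/p_{j,0}-\alpha/n>0$, the weighted bound
\begin{equation*}
\|T_{\alpha}(\vec f)\|_{L^{q_{0}}(v_{\vec w}^{q_{0}})} \le C\prod_{j=1}^{m}\|f_{j}\|_{L^{p_{j,0}}(w_{j}^{p_{j,0}})},\qquad \vec w\in A_{\vec P_{0},q_{0}}.
\end{equation*}
This weighted inequality is the input to the extrapolation machinery.

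The extrapolation step, in the spirit of Cruz-Uribe–Fiorenza–Martell–Pérez \cite{cruz2013variable} (and its multilinear/fractional extensions), transfers such weighted inequalities to the variable exponent setting. The mechanism is as follows: given $f_{j}\in L^{p_{j}(\cdot)}(\mathbb R^{n})$ and a test function $g\in L^{q'(\cdot)}(\mathbb R^{n})$ of unit norm, one applies the Rubio de Francia iteration algorithm associated with $M$ on $L^{q'(\cdot)}$ to construct an $A_{1}$-weight $W$ dominating $g$; dually, one builds weights from each input factor by iterating $M$ on $L^{p_{j}(\cdot)}$. Combining these produces a vector $\vec w\in A_{\vec P_{0},q_{0}}$ (for a carefully chosen $\vec P_{0}$ compatible with $p_{j,-}\le p_{j}(\cdot)\le p_{j,+}$), and the weighted estimate above, after pairing with $g$ via duality and applying Hölder's inequality on variable exponent spaces, yields the target bound $\|T_{\alpha}(\vec f)\|_{L^{q(\cdot)}}\lesssim \prod_{j}\|f_{j}\|_{L^{p_{j}(\cdot)}}$. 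The hypothesis $p_{j}(\cdot)\in\mathscr{P}^{\log}(\mathbb R^{n})$ is exactly what guarantees boundedness of $M$ on $L^{p_{j}(\cdot)}$ and $L^{p_{j}'(\cdot)}$ (by the Cruz-Uribe–Fiorenza–Neugebauer theorem), which drives the iteration; the condition $1/q(\cdot)<1$ ensures $q'(\cdot)\in\mathscr{P}^{\log}$ so that $M$ is likewise bounded on $L^{q'(\cdot)}$, a step indispensable to the dual construction of $W$.

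The main obstacle is the bookkeeping of the Rubio de Francia algorithm in the multilinear, fractional, variable-exponent setting — in particular, choosing the auxiliary exponents $\vec P_{0}$ so that the iterated weights genuinely lie in $A_{\vec P_{0},q_{0}}$ uniformly, and ensuring the final constants depend only on $n$, $m$, $\alpha$, $|\omega|_{\dini(1)}$ and the log-Hölder norms of the $p_{j}(\cdot)$. An alternative, more elementary route would be to establish a pointwise sparse domination of $T_{\alpha}(\vec f)$ by a multilinear fractional sparse form (possible since $\omega\in\dini(1)$) and read off the variable exponent estimate from boundedness of that sparse form on variable exponent spaces; this would bypass extrapolation at the cost of a separate pointwise domination argument.
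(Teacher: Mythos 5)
Your approach differs from the paper's in a fundamental way, and it is worth drawing out the contrast. The paper does not feed the full $A_{\vec P,q}$ weighted estimate from \cref{thm:dini-multi-fract-czo-weight} into a multilinear Rubio de Francia machine. Instead it proceeds in two more modular stages. First, via \cref{thm.dini-CZ-sharp-max-fract} and the Fefferman--Stein inequality with $A_\infty$ weights, it has (\cref{thm:weight-multi-czo-max-fract}) the single-weight comparison
\begin{align*}
\int_{\mathbb R^n} |T_\alpha(\vec f)(x)|^{q_0}\,w(x)\,\mathrm{d}x \le C \int_{\mathbb R^n} |\mathcal M_\alpha(\vec f)(x)|^{q_0}\,w(x)\,\mathrm{d}x,\qquad w\in A_1,
\end{align*}
for a fixed scalar $q_0$ with $1<q_0<q_-$. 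This is a one-weight, one-exponent inequality for the ordered pair $(T_\alpha(\vec f),\mathcal M_\alpha(\vec f))$, so the \emph{linear} extrapolation theorem (\cref{lem.extrapolation-thm}) applies directly and yields $\|T_\alpha(\vec f)\|_{q(\cdot)}\le C\|\mathcal M_\alpha(\vec f)\|_{q(\cdot)}$. Second, the remaining multilinear structure is resolved by the elementary pointwise bound $\mathcal M_\alpha(\vec f)\le\prod_{i=1}^m M_{\alpha_i}f_i$ with $\alpha=\alpha_1+\cdots+\alpha_m$, together with the known boundedness $M_{\alpha_i}\colon L^{p_i(\cdot)}\to L^{q_i(\cdot)}$ (\cref{lem.frac-max-variable}) and the generalized H\"older inequality on variable Lebesgue spaces (\cref{lem.holder-inequality}). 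Thus the only extrapolation invoked is scalar, and the multilinearity is killed by a product factorization rather than by a multilinear Rubio de Francia iteration.

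The sticking point in your plan is exactly the one you flag. You are invoking ``a multilinear version of Rubio de Francia's extrapolation theorem on variable exponent Lebesgue spaces'' that takes $A_{\vec P,q}$ weighted estimates as input and outputs variable-exponent bounds, but you neither cite a specific theorem nor carry out the construction; you acknowledge the bookkeeping of iterated weights across several input slots and a dual slot as an unresolved obstacle. Such multilinear extrapolation results do exist in the literature, but they are not available in this paper's toolkit, and constructing the vector weight $\vec w\in A_{\vec P_0,q_0}$ uniformly from separate Rubio de Francia iterations on each $L^{p_j(\cdot)}$ and on $L^{q'(\cdot)}$ is a nontrivial verification that your sketch leaves open. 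Absent that, the proposal is a plausible but incomplete plan rather than a proof. Notice also that the paper's route buys you more for less: it only requires boundedness of $M$ (not of $T_\alpha$) on the relevant variable exponent spaces, uses off-the-shelf linear extrapolation, and isolates the fractional/multilinear difficulty in the well-understood operators $M_{\alpha_i}$. Your alternative suggestion of a multilinear fractional sparse domination would also work and is genuinely different, but again it requires establishing a pointwise sparse bound for $T_\alpha$ under $\omega\in\dini(1)$ — a separate substantial argument not undertaken here.
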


\section{Notation and preliminaries}

\subsection{ Sharp maximal function and $A_{p}$ weights}

The following  concepts are needed.

Let $f$ be a locally integral function defined on  $\mathbb{R}^{n}$. Denote by $M$  the usual Hardy-Littlewood maximal operator, for  a cube $Q \subset \mathbb{R}^{n}$ and $\delta>0$,   the maximal functions $M_{\delta}$ is defined by
\begin{align*}
M_{\delta} (f)(x)   &= \big[ M(|f|^{\delta}) (x)  \big]^{1/\delta}  = \Big(\sup\limits_{Q\ni x}  \dfrac{1}{|Q|}  \dint_{Q} |f(y)|^{\delta} \mathrm{d} y \Big)^{1/\delta}.
\end{align*}
Let  $M^{\sharp}$ be  the standard sharp  maximal function of Fefferman and Stein\cite{fefferman1972h}, that is
\begin{align*}
M^{\sharp}f (x)     &= \sup_{Q\ni x} \inf_{c} \dfrac{1}{|Q|} \int_{Q} |f(y)-c| \mathrm{d}y  \approx  \sup_{Q\ni x} \dfrac{1}{|Q|} \int_{Q} |f(y)-f_{Q}| \mathrm{d}y,
\end{align*}
where, as usual, $f_{Q}$ denotes the average of $f$ over $Q$, and the supremum is taking over all the cubes $Q$ containing the point $x$.
 The operator  $M_{\delta}^{\sharp}$  is defined by  $M_{\delta}^{\sharp}f (x)    = \big[ M^{\sharp}(|f|^{\delta})(x) \big]^{1/\delta}$.

Let $w$ be a nonnegative locally integrable function defined in $\mathbb{R}^{n}$.
\begin{enumerate}[leftmargin=2em,label=(\roman*),itemindent=1.5em]  
\item  For   $1<p<  \infty$,  we say that $w$  is in the Muckenhoupt class $A_{p}$  (namely, $w\in A_{p}$), if there exists a constant $C>0$ (depending on the $A_{p}$ constant of $w$) such that for any cube $Q$, there has
\begin{align*}
  \left(\dfrac{1}{|Q|} \int_{Q}  w(x) \mathrm{d}x \right)  \left(\dfrac{1}{|Q|} \int_{Q}  w(x)^{1-p'} \mathrm{d}x \right)^{p-1} \le C.
\end{align*}
\item  We say that $w\in A_{1}$ if there exists a constant $C>0$ (depending on the $A_{1}$ constant of $w$) such that $Mw(x) \le C w(x)$ almost everywhere $x\in \mathbb{R}^{n}$.
\item  Define the $A_{\infty}$ by  $A_{\infty} = \bigcup_{p\ge 1} A_{p}$.
\end{enumerate}
  See \cite{garcia1985weighted,stein1993harmonic} or \cite{grafakos2014classical}(Chapter 7) for more information about the Muckenhoupt weight class $A_{p}$.

The following relationships between $ M_{\delta}^{\sharp}$ and $M_{\delta}$ to be used is a version of the classical ones due to Fefferman and
Stein \cite{fefferman1972h} ( see also \cite{journe1983calderon} or P. 1228 in \cite{lerner2009new}).
\begin{lemma} \label{lem:relation-sharp-maximal-weight}
Let  $0<p,\delta< \infty$ and $w$ be any   $A_{\infty}$-weight.
\begin{enumerate}[leftmargin=2em,label=(\roman*),itemindent=1.5em]  
\item   Then there exists a constant $C>0$  (depending on the $A_{\infty}$ constant of $w$),   such that the inequality
\begin{align*}
 \int \big( M_{\delta} (f)(x) \big)^{p} w(x) \mathrm{d}x  &\le C \int \big( M_{\delta}^{\sharp}(f)(x) \big)^{p} w(x) \mathrm{d}x
\end{align*}
holds for any function $f$ for which the left hand side is finite.
\item  Similarly, there exists another constant $C > 0$  (depending on the $A_{\infty}$ constant of $w$), such that
\begin{align*}
 \| M_{\delta} (f)\|_{L^{p,\infty}(w)}  &\le  C  \|M_{\delta}^{\sharp}(f)\|_{L^{p,\infty}(w)}
\end{align*}
holds for any function $f$ for which the left hand side is finite.
\end{enumerate}
\end{lemma}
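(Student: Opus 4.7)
The plan is to reduce both parts to the classical Fefferman--Stein good-$\lambda$ argument applied to the weight $w$. Setting $g = |f|^{\delta}$, one has $M_{\delta}(f) = (Mg)^{1/\delta}$ and $M_{\delta}^{\sharp}(f) = (M^{\sharp}g)^{1/\delta}$, so a change of variable in the distribution function reduces part (i) to the weighted inequality
\begin{align*}
\int_{\mathbb{R}^{n}} (Mg(x))^{q} w(x) \mathrm{d}x &\le C \int_{\mathbb{R}^{n}} (M^{\sharp}g(x))^{q} w(x) \mathrm{d}x
\end{align*}
with exponent $q = p/\delta \in (0,\infty)$ and the same finiteness hypothesis on the left-hand side, and reduces part (ii) to the analogous $L^{q,\infty}(w)$ bound. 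This reduction removes $\delta$ from the picture.

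The key ingredient will be the weighted good-$\lambda$ inequality: there exist constants $\gamma_{0}, \varepsilon, C > 0$, depending only on $n$ and the $A_{\infty}$ constant of $w$, such that for every $\lambda > 0$ and every $0 < \gamma < \gamma_{0}$,
\begin{align*}
w\bigl(\{x : Mg(x) > 2\lambda,\ M^{\sharp}g(x) \le \gamma\lambda\}\bigr) &\le C\gamma^{\varepsilon}\, w\bigl(\{x : Mg(x) > \lambda\}\bigr).
\end{align*}
I would prove this by decomposing the open level set $\{Mg > \lambda\}$ into its Whitney--Calder\'on--Zygmund cubes $\{Q_{k}\}$ and working locally on each $Q_{k}$. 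On $Q_{k}$, the stopping condition forces $Mg$ to be essentially realized by averages over a fixed dilate $Q_{k}^{*}$; replacing $g$ by $g - g_{Q_{k}^{*}}$ and using $M^{\sharp}g(x) \le \gamma\lambda$ together with Chebyshev yields the Lebesgue-measure bound $|\{\cdots\} \cap Q_{k}| \le C\gamma |Q_{k}|$. The $A_{\infty}$ property of $w$, in its reverse-H\"older form, then upgrades this to a $w$-measure bound with the stated $\gamma^{\varepsilon}$ factor.

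With the good-$\lambda$ inequality in hand, part (i) follows by multiplying through by $q\lambda^{q-1}$ and integrating over $\lambda \in (0,\infty)$; the right-hand side contributes a term $C\gamma^{\varepsilon} 2^{q} \int (Mg)^{q} w$, which, on choosing $\gamma$ small and invoking the finiteness of the left-hand side, can be absorbed. For part (ii), one integrates the same inequality against the distribution function defining the weak norm, so that $\sup_{\lambda} \lambda^{q} w(\{Mg > \lambda\})$ is bounded by the analogous quantity for $M^{\sharp}g$ after the same absorption step.

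The main obstacle is precisely this absorption: without the a~priori finiteness of the left-hand side one cannot close the estimate. I would handle it via the standard truncation $M_{\delta}(f) \wedge N$, proving the inequality with the truncated version (whose norm is automatically finite) and then letting $N \to \infty$ by monotone convergence. Apart from this technicality, the remainder of the argument is a routine integration of the good-$\lambda$ inequality, and follows the template already established in \cite{fefferman1972h,journe1983calderon,lerner2009new}.
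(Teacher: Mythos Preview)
The paper does not actually prove this lemma: it is stated as a known result and attributed to Fefferman--Stein \cite{fefferman1972h}, with pointers to \cite{journe1983calderon} and \cite{lerner2009new}. Your proposal reconstructs precisely the classical good-$\lambda$ argument that appears in those references, so there is nothing to compare against in the paper itself; your sketch is correct and is the standard route.

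One minor remark: your final paragraph on truncation is unnecessary here, since the lemma already carries the hypothesis that the left-hand side is finite, which is exactly what is needed to absorb the term $C\gamma^{\varepsilon}2^{q}\int (Mg)^{q}w$. The truncation device is what one uses when that finiteness is \emph{not} assumed a priori; in the present formulation you can simply invoke the hypothesis and absorb directly.
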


\subsection{Some auxiliary lemmas}

 In this part we   state some auxiliary propositions  and lemmas which will be needed for proving  our main theorems. And we only state partial results  we need.

 \begin{lemma}\quad \label{lem.variable-property-B}
Let $ p(\cdot)\in \mathscr{P}(\mathbb{R}^{n})$.
\begin{enumerate}[leftmargin=2em,label=(\arabic*),itemindent=1.5em]  
 \item \ If $ p(\cdot)\in \mathscr{C}^{\log}(\mathbb{R}^{n})$, 
 then we have $ p(\cdot)\in \mathscr{B}(\mathbb{R}^{n})$.
\item  (see Lemma 2.3 in \cite{cruz2014variable})
The  following conditions are equivalent:
\begin{enumerate}[label=(\roman*),align=left,itemindent=1em]  
 \item \   $ p(\cdot)\in \mathscr{B}(\mathbb{R}^{n})$,
 \item \   $p'(\cdot)\in \mathscr{B}(\mathbb{R}^{n})$.
  \item \   $ p(\cdot)/p_{0}\in \mathscr{B}(\mathbb{R}^{n})$ for some $1<p_{0}<p_{-}$,
 \item \   $ (p(\cdot)/p_{0})'\in \mathscr{B}(\mathbb{R}^{n})$ for some $1<p_{0}<p_{-}$.
\end{enumerate}
\end{enumerate}
\end{lemma}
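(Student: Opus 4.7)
The plan is to follow the classical Cruz-Uribe--Fiorenza--Neugebauer and Diening strategy. Given $f$, I would split $f = f_1 + f_2$ where $f_1 = f\chi_{\{|f|\le 1\}}$ and $f_2 = f - f_1$, and target the pointwise estimate
\[
Mf(x)^{p(x)} \le C\, M(|f|^{p(\cdot)})(x) + C\, h(x)
\]
for some fixed $h \in L^1(\mathbb{R}^n) \cap L^\infty(\mathbb{R}^n)$. For the small part $f_1$, local log-Hölder continuity implies that on any cube $Q \ni x$ with side length at most $1$ one has $|Q|^{p(x)-p(y)} \lesssim 1$ uniformly in $y \in Q$, which allows the average of $|f_1|$ raised to the power $p(x)$ to be compared with the average of $|f_1|^{p(y)}$. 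For the large part $f_2$, the log-Hölder condition at infinity, combined with $|f_2|^{p_\infty} \le |f_2|^{p(x)}$ (valid since $|f_2| \ge 1$) and a decaying tail of the form $(\mathrm{e}+|x|)^{-N}$, yields the analogous control on large cubes. Integrating the resulting modular inequality against $p(\cdot)$ produces the norm bound $\|Mf\|_{L^{p(\cdot)}} \lesssim \|f\|_{L^{p(\cdot)}}$, so $p(\cdot) \in \mathscr{B}(\mathbb{R}^n)$.

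\textbf{Part (2).} The equivalence (i)$\Leftrightarrow$(ii) is Diening's duality theorem, proved via a Rubio de Francia--type extrapolation for variable exponents: boundedness of $M$ on $L^{p(\cdot)}$ is characterized by a Muckenhoupt-style averaging condition that is manifestly symmetric under the map $p \mapsto p'$. The implication (iii)$\Rightarrow$(i) is direct: for any $p_0 > 1$, Hölder's inequality gives the pointwise bound $Mf(x)^{p_0} \le M(|f|^{p_0})(x)$, and applying the assumed boundedness of $M$ on $L^{p(\cdot)/p_0}$ to $|f|^{p_0}$ together with the identity $\bigl\||f|^{p_0}\bigr\|_{L^{p(\cdot)/p_0}} = \|f\|_{L^{p(\cdot)}}^{p_0}$ yields (i). The converse (i)$\Rightarrow$(iii) is the self-improvement (``left-openness'') property of the class $\mathscr{B}$ due to Diening. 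Finally (iii)$\Leftrightarrow$(iv) is obtained by applying the already established duality (i)$\Leftrightarrow$(ii) to the exponent $p(\cdot)/p_0$, which still lies in $\mathscr{P}(\mathbb{R}^n)$ whenever $1 < p_0 < p_-$.

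The main obstacle is the self-improvement direction (i)$\Rightarrow$(iii). It rests on Diening's characterization of $\mathscr{B}(\mathbb{R}^n)$ through a Muckenhoupt-type ``class $\mathcal{A}$'' condition for variable exponents, together with an openness argument showing this condition is preserved under small rescalings of $p(\cdot)$ by a constant factor. Rather than redevelop this machinery, I would invoke Lemma~2.3 of \cite{cruz2014variable} as already cited in the statement, after a brief check that the hypothesis $p(\cdot) \in \mathscr{P}(\mathbb{R}^n)$ passes to $p(\cdot)/p_0$ so that the duality step applies in parallel to the rescaled exponent.
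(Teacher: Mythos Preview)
Your sketch is along the correct lines, but you should know that the paper does not actually prove this lemma at all: immediately after the statement it simply records that part (1) is due independently to Cruz-Uribe--Fiorenza--Neugebauer and to Nekvinda, and that part (2) is Diening's result (with a pointer to Theorem 8.1 / Theorem 1.2 in \cite{cruz2006theboundedness}). In other words, the paper treats the lemma as a citation, not as something to be argued.

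What you have written is essentially an outline of the proofs in those cited sources --- the modular/pointwise estimate for $M$ under log-H\"older regularity for part (1), and the Diening duality plus left-openness machinery for part (2) --- so your approach is consistent with the literature the paper invokes, just more detailed than the paper itself. If your goal is to match the paper, a one-line citation suffices; if your goal is a self-contained argument, your plan is the standard one, with the caveat (which you already flag) that the self-improvement step (i)$\Rightarrow$(iii) genuinely requires Diening's $\mathcal{A}$-class characterization and is not elementary.
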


 The  first part in   \cref{lem.variable-property-B} is independently due to Cruz-Uribe et al. \cite{cruz2003maximal} and to Nekvinda\cite{nekvinda2004hardy}  respectively. The second of  \cref{lem.variable-property-B}  belongs to  Diening\cite{diening2005maximalf}~(see Theorem 8.1 or Theorem 1.2 in  \cite{cruz2006theboundedness}).

%

The following gives the  generalized H\"{o}lder's inequality.
\begin{lemma} [generalized H\"{o}lder's inequality]\label{lem.holder-inequality} \
\begin{enumerate}[leftmargin=2em,label=(\arabic*),itemindent=1.5em]  
\item \ \ Let $p(\cdotp),q(\cdotp),r(\cdotp)\in  \mathscr{P}_{0}(\mathbb{R}^{n})$ satisfy the condition
\[
\dfrac{1}{r(x)} = \dfrac{1}{p(x)} + \dfrac{1}{q(x)} \qquad \mbox{for a.e.} \ x\in \mathbb{R}^{n}.
\]
\begin{enumerate}[label=(\roman*),align=left,itemindent=1em]  
\item Then, for all $f \in L^{{p(\cdotp)}}(\mathbb{R}^{n})$ and $g\in L^{{q(\cdotp)}}(\mathbb{R}^{n})$, one has
\begin{align} \label{equ:holder-2}
\|fg\|_{r(\cdotp)} &\le C\|f\|_{p(\cdotp)}  \|g\|_{q(\cdotp)}.
\end{align}
\item   When $r(\cdotp)=1$, then $p'(\cdotp) = q(\cdotp)$, hence, for all $f \in L^{{p(\cdotp)}}(\mathbb{R}^{n})$ and $g\in L^{{p'(\cdotp)}}(\mathbb{R}^{n})$, one has
\begin{align}\label{equ:holder-1}
\int_{\mathbb{R}^{n}}|fg|  &\le C\|f\|_{p(\cdotp)}  \|g\|_{p'(\cdotp)}.
\end{align}
\end{enumerate}
\item The generalized H\"{o}lder's inequality in Orlicz space:\
\begin{enumerate}[label=(\roman*),align=left,itemindent=1em]  
\item Let $r_1,\dots,r_m\ge 1$ with $\frac{1}{r}=\frac{1}{r_1}+\cdots+\frac{1}{r_m}$ and $Q$ be a cube in $\mathbb{R}^{n}$. Then
\begin{align*}
\frac{1}{|Q|}\int_{Q}|f_{1}(x)\cdots f_{m}(x)g(x)| dx &\le C\|f_{1}\|_{\exp L^{r_{1}},Q} \cdots \|f_{m}\|_{\exp L^{r_{m}},Q} \|g\|_{L(\log L)^{1/r},Q}.
\end{align*}
\item   Let $t\ge 1$, then
\begin{align}\label{equ:holder-4}
\frac{1}{|Q|}\int_{Q}|f(x)g(x)| dx &\le C\|f\|_{\exp L^{t},Q}   \|g\|_{L(\log L)^{1/t},Q}.
\end{align}
\end{enumerate}
\item \ Let $q(\cdotp),q_{1}(\cdot),\dots, q_{m}(\cdot)\in  \mathscr{P}(\mathbb{R}^{n})$ satisfy the condition
\[
\dfrac{1}{q(x)} = \dfrac{1}{q_{1}(x)}+\cdots + \dfrac{1}{q_{m}(x)} \qquad \mbox{for a.e.} \ x\in \mathbb{R}^{n}.
\]
Then, for any $f_{j} \in L^{{q_{j}(\cdotp)}}(\mathbb{R}^{n})$ , $j=1,\dots,m$, one has
\begin{align*} 
\|f_{1}\cdots f_{m} \|_{q(\cdotp)} &\le C\|f_{1}\|_{q_{1}(\cdot)} \cdots \|f_{m}\|_{q_{m}(\cdot)}.
\end{align*}
\end{enumerate}
\end{lemma}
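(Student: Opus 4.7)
The plan is to treat the three parts separately, reducing each to a pointwise Young-type inequality together with the modular-norm relation that characterizes the Luxemburg norm.

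For part (1)(i), the strategy is the classical homogeneity argument. By scaling, it suffices to prove the inequality when $\|f\|_{p(\cdot)}=\|g\|_{q(\cdot)}=1$, so that by the definition of the Luxemburg norm one has $\int_{\mathbb{R}^{n}}|f(x)|^{p(x)}dx\le 1$ and $\int_{\mathbb{R}^{n}}|g(x)|^{q(x)}dx\le 1$. Since $\frac{r(x)}{p(x)}+\frac{r(x)}{q(x)}=1$ a.e., the pointwise Young inequality yields
\[
|f(x)g(x)|^{r(x)}\le \frac{r(x)}{p(x)}|f(x)|^{p(x)}+\frac{r(x)}{q(x)}|g(x)|^{q(x)}\le |f(x)|^{p(x)}+|g(x)|^{q(x)}.
\]
Integrating gives $\int |fg|^{r(x)}dx\le 2$, and rescaling by $\eta=2^{1/r_{-}}$ in the modular produces $\|fg\|_{r(\cdot)}\le C$, with $C$ depending only on $r_{-}$. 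Part (1)(ii) is then the special case $r(\cdot)\equiv 1$, in which the identity $\frac{1}{p(x)}+\frac{1}{q(x)}=1$ forces $q(\cdot)=p'(\cdot)$ a.e.

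For part (2), the first step is to recall the standard duality $(L(\log L)^{1/t})^{\ast}\approx \exp L^{t}$ on a fixed cube (with the averaging measure $dx/|Q|$), which produces the pointwise Young-type inequality
\[
st\le \Phi(s)+\Psi(t),\qquad \Phi(s)=s(1+\log^{+}s)^{1/t},\ \ \Psi(s)=\mathrm{e}^{s^{t}}-1.
\]
Then (2)(ii) follows by the same normalization trick as in part (1): rescale $f$ and $g$ so that their $\exp L^{t}$- and $L(\log L)^{1/t}$-modulars over $Q$ are each at most $1$, add the two resulting integrals, and obtain \eqref{equ:holder-4}. For (2)(i) one iterates: first apply Young with pair $(\exp L^{r_1},L(\log L)^{1/r_1})$ to peel off $f_1$, noting that the Young complement of $\exp L^{r_1}$ on the product $f_2\cdots f_m g$ still lies in an Orlicz class whose generalized inverse can be handled inductively via the identity $\tfrac{1}{r}-\tfrac{1}{r_1}=\tfrac{1}{r_2}+\cdots+\tfrac{1}{r_m}$, and one continues until all $f_i$ are stripped away.

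Part (3) is then a pure iteration of part (1)(i). Set $\frac{1}{s_k(x)}=\frac{1}{q_1(x)}+\cdots+\frac{1}{q_k(x)}$ so that $s_m(\cdot)=q(\cdot)$ and $\frac{1}{s_{k+1}(x)}=\frac{1}{s_k(x)}+\frac{1}{q_{k+1}(x)}$. Applying part (1)(i) with $p(\cdot)=s_k(\cdot)$, $q(\cdot)=q_{k+1}(\cdot)$, $r(\cdot)=s_{k+1}(\cdot)$ at each step gives $\|f_1\cdots f_{k+1}\|_{s_{k+1}(\cdot)}\le C\|f_1\cdots f_k\|_{s_k(\cdot)}\|f_{k+1}\|_{q_{k+1}(\cdot)}$, and $m-1$ such steps produce the stated bound with constant $C=C(m,q_{-})$.

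The main obstacle will be part (2)(i): the careful bookkeeping of the Young complements of $\exp L^{r_i}$ when several Orlicz factors are present, and verifying that the residual class after peeling off $f_1,\dots,f_k$ is controlled by $\exp L^{r_{k+1}}\cdots\exp L^{r_m}\cdot L(\log L)^{1/r}$. All three parts are otherwise standard and follow the framework of Kov\'a\v{c}ik–R\'akosn\'ik and the Orlicz-space theory used in \cite{cruz2013variable}, so it is reasonable to invoke those references for the technical Orlicz duality step rather than reconstruct it in full.
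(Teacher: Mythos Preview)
Your proposal is correct and, in fact, more detailed than what the paper itself provides. The paper does not prove this lemma at all: immediately after the statement it simply cites references---part~(1) to Kov\'a\v{c}ik--R\'akosn\'{\i}k and the monographs of Cruz-Uribe and Diening et al., part~(2) to P\'erez and Lerner et al., and part~(3) to Lu--Zhang. So there is no ``paper's own proof'' to compare against; the lemma is treated as a known preliminary.

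Your arguments for parts~(1) and~(3) are exactly the standard ones found in those references: normalize, apply pointwise Young with exponents $p(x)/r(x)$ and $q(x)/r(x)$, bound the modular, and iterate. One small point worth making explicit in part~(3): the intermediate exponents $s_k(\cdot)$ can drop below $1$ (e.g.\ if all $q_j\equiv 2$ and $k\ge 2$), so it is important that you are invoking part~(1)(i) in the generality of $\mathscr{P}_0$ rather than $\mathscr{P}$---you do this correctly, but it is worth flagging since the paper states part~(3) only for $q_j(\cdot)\in\mathscr{P}$.

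For part~(2)(i), your iterative ``peel off $f_1$'' strategy is not quite how the cited references proceed, and the bookkeeping you flag as the main obstacle is genuine: after one application you would need to control $\|f_2\cdots f_m g\|_{L(\log L)^{1/r_1},Q}$, which is not the same inequality you started with. The standard route (P\'erez, Lerner et al.) instead uses a single multi-factor generalized Young inequality for the complementary Orlicz functions, rather than an induction. Since you already anticipate citing the literature for this step, and since that is precisely what the paper does, this is not a defect in your proposal---just be aware that the induction as sketched would not close on its own.
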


In     \cref{lem.holder-inequality}, the  first part is known as  the generalized H\"{o}lder's inequality on variable exponent Lebesgue spaces, and the proof can   be found in \cite{kovavcik1991spaces}(see also P.27-30 in \cite{cruz2013variable} or P.81-82, Lemma 3.2.20 in \cite{diening2011lebesgue}); the  second  part    is  generalized H\"{o}lder's inequality in Orlicz space (for  details and  the more general cases see \cite{perez2002sharp,perez1995endpoint,lerner2009new});
and the third part  see Lemma 9.2 in \cite{lu2014multilinear}.

The following inequalities are also necessary (see (2.16) in \cite{lerner2009new} or Lemma 2.3 in \cite{xue2013weighted} or Lemma 4.6 in \cite{lu2014multilinear}  or page 485 in \cite{garcia1985weighted}).

\begin{lemma}[Kolmogorov's inequality] \label{lem:kolmogorov}
Let  $0<p<q<\infty $, cube $Q \subset \mathbb{R}^{n}$. Using $L^{q,\infty}(Q) $ denotes   the weak space with norm $\|f\|_{L^{q,\infty}(Q)} = \sup\limits_{t>0} t|\{ x\in Q: |f(x)|>t\}|^{1/q} $.
\begin{enumerate}[leftmargin=2em,label=(\roman*),itemindent=1.5em]  
\item  Then there is a positive constant $C=C_{p,q}$ such that for any measurable funcction $f$ there has
\begin{align*}
 |Q|^{-1/p} \|f\|_{L^{p}(Q)}  &\le C |Q|^{-1/q} \|f\|_{L^{q,\infty}(Q)}.
\end{align*}
\item  If $0<\alpha <n$ and   $1/q = 1/p - \alpha/n$. Then there is a positive constant $C=C_{p,q}$ such that  for any measurable function $f$ there has
\begin{align*}
\|f\|_{L^{p}(Q)}  &\le  C |Q|^{\alpha/n} \|f\|_{L^{q,\infty}(Q)}.
\end{align*}
\end{enumerate}
\end{lemma}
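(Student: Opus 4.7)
The plan is to prove part (i) directly by the layer-cake formula with an optimized truncation, and then deduce part (ii) as an immediate corollary by specializing the exponents.

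For part (i), I would start from the distributional identity
\begin{align*}
\|f\|_{L^{p}(Q)}^{p} &= p\int_{0}^{\infty} t^{p-1} |\{x\in Q:|f(x)|>t\}| \, \mathrm{d}t,
\end{align*}
and split the $t$-integral at a threshold $t_{0}>0$ to be chosen. On the range $0<t\le t_{0}$ I use the trivial estimate $|\{x\in Q:|f(x)|>t\}|\le |Q|$, which contributes $|Q|\, t_{0}^{p}$. On the range $t>t_{0}$ I use the weak-type bound $|\{x\in Q:|f(x)|>t\}|\le t^{-q}\|f\|_{L^{q,\infty}(Q)}^{q}$, and since $p<q$ the integral $\int_{t_{0}}^{\infty} t^{p-1-q}\,\mathrm{d}t$ converges and equals $\frac{t_{0}^{p-q}}{q-p}$. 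This yields
\begin{align*}
\|f\|_{L^{p}(Q)}^{p} &\le |Q|\, t_{0}^{p} + \frac{p}{q-p}\, t_{0}^{p-q} \|f\|_{L^{q,\infty}(Q)}^{q}.
\end{align*}

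The optimal choice is $t_{0}=|Q|^{-1/q}\|f\|_{L^{q,\infty}(Q)}$, which balances the two terms and produces
\begin{align*}
\|f\|_{L^{p}(Q)}^{p} &\le C_{p,q}\, |Q|^{1-p/q}\, \|f\|_{L^{q,\infty}(Q)}^{p}.
\end{align*}
Taking $p$-th roots and rearranging gives $|Q|^{-1/p}\|f\|_{L^{p}(Q)}\le C_{p,q}\, |Q|^{-1/q}\|f\|_{L^{q,\infty}(Q)}$, which is part (i). The main (and really only) delicate point is verifying that the choice of $t_{0}$ is admissible, i.e.\ that $\|f\|_{L^{q,\infty}(Q)}<\infty$; if it is infinite the inequality is trivial, and if it is zero then $f=0$ a.e.\ on $Q$.

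For part (ii), I simply apply part (i) with the exponents that satisfy $\frac{1}{p}-\frac{1}{q}=\frac{\alpha}{n}$. Since $0<\alpha<n$ forces $p<q$, the hypothesis of part (i) is met, and the inequality there reads
\begin{align*}
\|f\|_{L^{p}(Q)} &\le C\, |Q|^{1/p-1/q} \|f\|_{L^{q,\infty}(Q)} = C\, |Q|^{\alpha/n} \|f\|_{L^{q,\infty}(Q)},
\end{align*}
which is exactly the claim. No further work is required, so the only real step to execute carefully is the split-and-optimize argument in part (i).
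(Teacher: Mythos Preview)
Your argument is correct and is exactly the classical layer-cake/split proof of Kolmogorov's inequality; the only thing to note is that the paper does not actually give its own proof of this lemma but simply cites external references (e.g.\ \cite{lerner2009new}, \cite{garcia1985weighted}), so there is no in-paper proof to compare against. Your approach is the standard one found in those sources, and the deduction of (ii) from (i) via $1/p-1/q=\alpha/n$ is the intended reading.
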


\subsection{Multilinear fractional maximal functions and multiple weights}

%


\begin{definition} [multilinear fractional maximal functions] \label{def.mul-max-frac} \
For all locally integrable functions $\vec{f}=(f_{1},f_{2},\dots,f_{m})$ and $x\in \mathbb{R}^n$, let $0 \le \alpha < mn$.
\begin{enumerate}[leftmargin=2em,label=(\arabic*),itemindent=1.5em]  
\item  The multilinear fractional maximal functions $\mathcal{M}_{\alpha}$ and $\mathcal{M}_{\alpha,r}$ are defined by
\begin{align*}
\mathcal{M}_{\alpha}(\vec{f})(x)   &= \sup\limits_{Q\ni x} |Q|^{\alpha/n}\prod\limits_{j=1}^{m} \frac{1}{|Q|}  \dint_{Q} |f_{j}(y_{j})| d y_{j}  = \sup\limits_{Q\ni x}  \prod\limits_{j=1}^{m} \frac{1}{|Q|^{1-\alpha/(nm)}}  \dint_{Q} |f_{j}(y_{j})| d y_{j},
\\ \intertext{and}
\mathcal{M}_{\alpha, r}(\vec{f})(x)   &= \sup\limits_{Q\ni x} |Q|^{\alpha/n} \prod\limits_{j=1}^{m} \Big(\frac{1}{|Q|}  \dint_{Q} |f_{j}(y_{j})|^{r} d y_{j} \Big)^{1/r}, \text{for}~ r>1,
\end{align*}
where the supremum is taken over all the cubes $Q$ containing $x$.
\item  the  multilinear fractional maximal functions related to Young function $\Phi(t)=t(1+\log^{+}t)$ are defined by
\begin{align*}
\mathcal{M}_{\alpha, L(\log L)}^{i}(\vec{f})(x)  & = \sup\limits_{Q\ni x} |Q|^{\alpha/n}  \|f_{i}\|_{L(\log L) ,Q} \prod\limits_{j=1 \atop j\neq i}^{m} \frac{1}{|Q|}  \dint_{Q} |f_{j}(y_{j})| d y_{j} ,
\\ \intertext{and}
\mathcal{M}_{\alpha, L(\log L)} (\vec{f})(x)   &= \sup\limits_{Q\ni x} |Q|^{\alpha/n}  \prod\limits_{j=1}^{m} \|f_{j}\|_{L(\log L) ,Q} ,
\end{align*}
where the supremum is taken over all the cubes $Q$ containing $x$, and $\|\cdot\|_{L(\log L),Q}$ is the Luxemburg type average defined via
\begin{align*}
\|g\|_{L(\log L),Q} &= \inf \Big\{\lambda>0:  \frac{1}{|Q|} \int_{Q}  \frac{|g(x)|}{\lambda} \log(e+|f|/\lambda)  \mathrm{d}x \le 1 \Big\}.
\end{align*}
\end{enumerate}
\end{definition}

\begin{remark}  \label{rem.multilinear-maximal-fractional}
\begin{enumerate}[leftmargin=2em,label=(\alph*),itemindent=1.5em]  
\item  If  we take $f\equiv 1$ in \eqref{equ:holder-4} with $t=1$, it follows that for every  $\alpha\in [0,mn)$ the
inequality
\begin{align*} \label{inequ:mulmaxfrac-relation}
\mathcal{M}_{\alpha}(\vec{f})(x)  \le C \mathcal{M}_{L(\log L)}^{i} (\vec{f})(x)   &\le C_{1} \mathcal{M}_{\alpha,L(\log L)} (\vec{f})(x).
\end{align*}
\vspace{-2em}
\label{enumerate:mulmaxfrac-relation}
\item  In \cite{bernardis2010composition}, the authors prove that   $M_{\alpha}(M^{k}) \approx M_{\alpha,L(\log L)^{k}}$   with $k \in \mathbb{N}$,  where  $M^{k}$ is the iteration of the Hardy-Littlewood maximal operator $k$ times. In particularly, for $\alpha=0$ and  $k=1$, one have   $M_{L(\log L)} \approx M^{2}=M\circ M$ ~(see also \cite{perez1995endpoint} or \cite{bernardis2006weighted}).
\label{enumerate:maxfrac-relation}
\end{enumerate}
\end{remark}

The following gives the characterization of the multiple-weight class  $A_{\vec{P}}$ and  $A_{\vec{P},q}$, Separately.
\begin{lemma}\label{lem:multiple-weights}
 Let $\vec{w} = (w_{1},\dots ,w_{m})$, $\vec{P} = (p_{1},\dots ,p_{m})$ and $\frac{1}{p}= \frac{1}{p_{1}}+ \cdots + \frac{1}{p_{m}}$ with $1\le p_{j} < \infty~ (j = 1,\dots,m)$.
\begin{enumerate}[leftmargin=2em,label=(\roman*),itemindent=1.5em]  
\item \ $\vec{w}  \in A_{\vec{P}}$  if and only if
\begin{equation*}
\left\{ \begin{aligned}
        w_{j}^{1-p'_{j}} \in A_{mp'_{j}} & \ \   (j = 1,\dots,m)\\ \vspace{-4em}
        u_{\vec{w}} \in A_{mp} &  \ \
                          \end{aligned} \right.,
                          \end{equation*}
where  $w_{j}^{1-p'_{j}}  \in A_{mp'_{j}}$  in the case $p_{j} = 1$   is understood as $w_{j}^{1/m} \in A_{1}~$.
\label{enumerate:multiple-weights}
\item \   Let $0<\alpha \le mn$ and $\frac{1}{q}= \frac{1}{p}-\frac{\alpha}{n}$. Suppose $\vec{w}  \in A_{\vec{P},q}$,  then
\begin{equation*}
\left\{ \begin{aligned}
        w_{j}^{-p'_{j}} \in A_{mp'_{j}} & \ \  (j = 1,\dots,m) \\ \vspace{-4em}
        v_{\vec{w}}^{q}  \in A_{mq} &  \ \
                          \end{aligned} \right.,
                          \end{equation*}
\vspace{-2em}
\label{enumerate:multiple-weights-fract}
\end{enumerate}
\end{lemma}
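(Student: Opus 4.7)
My plan is to verify the Muckenhoupt conditions on the right-hand sides of (i) and (ii) directly from the multiple-weight inequalities by applying the multilinear Hölder inequality with exponents adjusted to the structure of the weight product.

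For part (i), forward direction, to establish $u_{\vec{w}}\in A_{mp}$ I must estimate $\bigl(\tfrac{1}{|Q|}\int_Q u_{\vec{w}}^{-1/(mp-1)}\bigr)^{mp-1}$. Writing $u_{\vec{w}}^{-1/(mp-1)}=\prod_j w_j^{-p/(p_j(mp-1))}$ and applying Hölder with the exponents $\beta_j=p'_j(mp-1)/p$ turns each factor into $w_j^{1-p'_j}$. The conjugacy check $\sum 1/\beta_j=\frac{p}{mp-1}\sum 1/p'_j=\frac{p(m-1/p)}{mp-1}=1$ makes Hölder exact, and combining with the $A_{\vec{P}}$ product inequality yields the $A_{mp}$ bound on $u_{\vec{w}}$. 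The membership $w_j^{1-p'_j}\in A_{mp'_j}$ is proved by a symmetric dual argument, isolating the $j$-th factor and redistributing the remaining weights through Hölder. The converse direction combines the individual $A_r$ bounds with a single application of the multilinear Hölder inequality to reproduce the $A_{\vec{P}}$ product.

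For part (ii), the strategy is identical, but the fractional parameter $\alpha$ introduces a small slack in the Hölder exponents. To show $v_{\vec{w}}^q\in A_{mq}$, expand $v_{\vec{w}}^{-q/(mq-1)}=\prod_j w_j^{-q/(mq-1)}$ and use Hölder with exponents $\beta_j=p'_j(mq-1)/q$. A direct computation using $1/p=1/q+\alpha/n$ gives
\[
\sum 1/\beta_j=\frac{q(m-1/p)}{mq-1}=1-\frac{q\alpha}{n(mq-1)}\le 1,
\]
so I append a trivial factor $1$ with dual exponent $\beta_0=1/\bigl(1-\sum 1/\beta_j\bigr)$ and apply the resulting incomplete Hölder inequality; the $|Q|^{1/\beta_0}$ that appears cancels against the normalisation by $|Q|$. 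Raising the outcome to the $(mq-1)$-th power and multiplying by $\tfrac{1}{|Q|}\int_Q v_{\vec{w}}^q$ reproduces the $A_{\vec{P},q}$ quantity raised to the $q$-th power, giving the $A_{mq}$ bound. For $w_j^{-p'_j}\in A_{mp'_j}$, I factor $w_j=v_{\vec{w}}\prod_{k\neq j}w_k^{-1}$ and apply the same incomplete-Hölder scheme to $w_j^{p'_j/(mp'_j-1)}$, matching powers so that $v_{\vec{w}}$ enters at the $q$-th power and each $w_k$ at the $-p'_k$-th power; the residual exponent is again a nonnegative multiple of $\alpha$.

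The main obstacle is purely arithmetic: keeping track of all the conjugate exponents and verifying that the residual in the incomplete Hölder inequality is nonnegative (which is exactly the condition $\alpha\ge 0$). Once these identities are confirmed, the proof in both parts reduces to a single calculation.
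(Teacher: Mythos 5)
The paper itself does not prove this lemma; it is quoted verbatim from the literature, with part (i) attributed to Lerner et al.\ (Theorem~3.6 of \cite{lerner2009new}) and part (ii) to Moen and to Chen--Xue, so you are attempting a direct proof where the paper offers only a citation. Your H\"older computations for the forward implications are correct: with $\beta_j=p'_j(mp-1)/p$ one has $\sum_j 1/\beta_j=1$ and $(mp-1)/\beta_j=p/p'_j$, so the estimate for $u_{\vec w}\in A_{mp}$ closes; the companion choice $a=p'_j/(p(mp'_j-1))$, $b_k=p'_j/(p'_k(mp'_j-1))$ gives $a+\sum_{k\ne j}b_k=1$ and yields $w_j^{1-p'_j}\in A_{mp'_j}$; and in part (ii) the residual $1-\sum_j 1/\beta_j = q\alpha/\bigl(n(mq-1)\bigr)>0$ (respectively $p'_j\alpha/\bigl(n(mp'_j-1)\bigr)>0$) is indeed nonnegative precisely because $\alpha>0$, so the incomplete-H\"older trick of appending the constant function $1$ is legitimate. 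Note that the spare factor simply contributes $(\,|Q|^{-1}\int_Q 1\,)^{1/\beta_0}=1$, so nothing need ``cancel.''

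The gap is in the one-sentence treatment of the converse of part~(i). You wish to bound the product $\bigl(\tfrac{1}{|Q|}\int_Q u_{\vec w}\bigr)^{1/p}\prod_j\bigl(\tfrac{1}{|Q|}\int_Q w_j^{1-p'_j}\bigr)^{1/p'_j}$ from \emph{above}, but H\"older's inequality bounds averages of products from above by products of averages; equivalently it only gives \emph{lower} bounds on such a product. Indeed your own forward-direction inequality $\bigl(\tfrac{1}{|Q|}\int_Q u_{\vec w}^{-1/(mp-1)}\bigr)^{mp-1}\le\prod_j\bigl(\tfrac{1}{|Q|}\int_Q w_j^{1-p'_j}\bigr)^{p/p'_j}$ shows that $u_{\vec w}\in A_{mp}$ bounds the $A_{\vec P}$ quantity from \emph{below}, not above. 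The converse genuinely requires the $A_\infty$ structure of the individual weights---for example the reverse Jensen inequality $\tfrac{1}{|Q|}\int_Q v\le C\exp\bigl(\tfrac{1}{|Q|}\int_Q\log v\bigr)$ applied to $u_{\vec w}$ and each $w_j^{1-p'_j}$, whereupon the logarithmic exponents telescope to zero because $\tfrac1p\log u_{\vec w}+\sum_j\tfrac{1-p'_j}{p'_j}\log w_j\equiv 0$. A ``single application of the multilinear H\"older inequality'' will not produce this bound. Since the rest of the paper only ever uses the forward implication (in fact only the fact that $\vec w\in A_{\vec P,q}$ forces $v_{\vec w}^q\in A_\infty$), your argument covers everything the paper needs, but it does not prove the ``if and only if'' as stated.
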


 The  first part in   \cref{lem:multiple-weights} is   due to Lerner et al. in \cite{lerner2009new} (see Theorem 3.6).  The second of  \cref{lem:multiple-weights}  was   introduced by Moen \cite{moen2009linear} independently (see Theorem 3.4  in  \cite{moen2009linear} or Theorem 2.1 in  \cite{chen2010weighted}).

\begin{remark}\label{rem:multiple-weights}
 Let $\vec{w} = (w_{1},\dots ,w_{m})$, $\vec{P} = (p_{1},\dots ,p_{m})$ and $\frac{1}{p}= \frac{1}{p_{1}}+ \cdots + \frac{1}{p_{m}}$ with $1\le p_{j} < \infty~ (j = 1,\dots,m)$.
\begin{enumerate}[leftmargin=2em,label=(\roman*),itemindent=1.5em]  
\item  When $m = 1$,  $A_{\vec{P},q}$ will be degenerated to the classical $A_{p,q}$ weights,  and  $A_{\vec{P}}$   reduces to the classical $A_{p}$ weights.
\item  (see P.1232 in \cite{lerner2009new}) If $w_{j} \in  A_{p_{j}}~(j=1,\dots,m)$,   then by H\"{o}lder's inequality we have
\begin{align*}
&\;  \Big(\frac{1}{|Q|} \int_{Q}  u_{\vec{w}}(x)  \mathrm{d}x  \Big)^{1/p}  \prod_{j=1}^{m} \Big(\frac{1}{|Q|} \int_{Q}  \big( w_{j} (x)  \big)^{1-p'_{j}}   \mathrm{d}x \Big)^{ 1/p'_{j}}             \\
  &=  \Big(\frac{1}{|Q|} \int_{Q}\prod_{j=1}^{m} \big( w_{j}(x) \big)^{p/p_{j}} \mathrm{d}x  \Big)^{1/p}  \prod_{j=1}^{m} \Big(\frac{1}{|Q|} \int_{Q}  \big( w_{j} (x)  \big)^{1-p'_{j}}   \mathrm{d}x \Big)^{ 1/p'_{j}}          \\
 &\le  \prod_{j=1}^{m} \Big(\frac{1}{|Q|} \int_{Q}   w_{j}(x)  \mathrm{d}x  \Big)^{1/p_{j}}   \Big(\frac{1}{|Q|} \int_{Q}  \big( w_{j} (x)  \big)^{1-p'_{j}}   \mathrm{d}x \Big)^{ 1/p'_{j}}< \infty,
\end{align*}
so we have
\begin{align*}
  \prod_{j=1}^{m} A_{p_{j}}  &\subsetneq     A_{\vec{P}}.
\end{align*}

\item  (see Remark 3.3 and 7.5 in \cite{moen2009weighted}) If $p_{j} \le q_{j}$, $w_{j} \in  A_{p_{j},q_{j}} ~(j=1,\dots,m)$, and $\frac{1}{q}= \frac{1}{q_{1}}+ \cdots + \frac{1}{q_{m}}$,   then by H\"{o}lder's inequality we have
\begin{align*}
&\;  \Big(\frac{1}{|Q|} \int_{Q} \big(v_{\vec{w}}(x)\big)^{q} dx  \Big)^{1/q}  \prod_{j=1}^{m} \Big(\frac{1}{|Q|} \int_{Q}  \big( w_{j} (x)  \big)^{-p'_{j}}   dx \Big)^{ 1/p'_{j}}             \\
  &=  \Big(\frac{1}{|Q|} \int_{Q} \big(\prod_{j=1}^{m}  w_{j}(x) \big)^{q} \mathrm{d}x  \Big)^{1/q}  \prod_{j=1}^{m} \Big(\frac{1}{|Q|} \int_{Q}  \big( w_{j} (x)  \big)^{-p'_{j}}   \mathrm{d}x \Big)^{ 1/p'_{j}}          \\
 &\le  \prod_{j=1}^{m} \Big(\frac{1}{|Q|} \int_{Q} \big(  w_{j}(x) \big)^{q_{j}} \mathrm{d}x  \Big)^{1/q_{j}}   \Big(\frac{1}{|Q|} \int_{Q}  \big( w_{j} (x)  \big)^{-p'_{j}}   \mathrm{d}x \Big)^{ 1/p'_{j}} < \infty,
\end{align*}
and therefore,
\begin{align*}
\bigcup_{q_{1},\cdots,q_{m}}   \prod_{j=1}^{m} A_{p_{j}, q_{j}}  &\subsetneq     A_{\vec{P},q},
\end{align*}
where the union is over all $q_{j} \ge p_{j}$ that satisfy $\frac{1}{q}= \frac{1}{q_{1}}+ \cdots + \frac{1}{q_{m}}$.
\end{enumerate}
\end{remark}


\section{Proof of \cref{thm:dini-multi-fract-endpoint}}

\begin{proof}
Let $B = \|T_{\alpha}\|_{L^{p_1} \times  \cdots\times L^{p_m} \to L^{q,\infty}}$. Fix $\lambda> 0$ and consider functions  $f_{j} \in L^{1}(\mathbb{R}^{n})$ for $1\le j \le m$. Without loss of generality, we may assume that $\|f_{1}\|_{L^{1}(\mathbb{R}^{n})} = \cdots = \|f_{m}\|_{L^{1}(\mathbb{R}^{n})}=1$.  it need to  show that there is a constant $C=C _{m, n,|\omega|_{\dini(1)}} >0$  such  that
\begin{align}\label{equ:weak-norm-estimate}
 |\{x\in \mathbb{R}^{n}: |T_{\alpha}(\vec{f})(x)|> \lambda\}| &\le  C   \Big(\frac{A+ B}{\lambda} \Big)^{\frac{n}{mn-\alpha}},
\end{align}

Set  $\gamma$  be a positive real number to be determined later. Applying the Calder\'{o}n-Zygmund decomposition to each function $f_{j} $
at height $ (\lambda\gamma)^{\frac{n}{mn-\alpha}}$  to obtain ``good'' function $g_{j}$ and  ``bad'' function $b_{j}$ with a sequence of pairwise disjoint cubes $\{Q_{j,k_{j}}\}_{k_{j}=1}^{\infty}$ such that
\begin{align*}
 f_{j} &=   g_{j} + b_{j} = g_{j} + \sum_{k_{j}} b_{j,k_{j}}
\end{align*}
for all $j=1,\dots,m$, where
\begin{enumerate}[leftmargin=2em,label=(P\arabic*),itemindent=1.5em]  
\item   $\supp(b_{j,k_{j}}) \subset Q_{j,k_{j}}$,
\label{enumerate:CZ-decom-1}
\item     $\dint_{\mathbb{R}^{n}} b_{j,k_{j}}(x) \mathrm{d}x =0$,
\label{enumerate:CZ-decom-2}
\item   $\dint_{\mathbb{R}^{n}} |b_{j,k_{j}}(x)| \mathrm{d}x  \le C (\lambda\gamma)^{\frac{n}{mn-\alpha}} |Q_{j,k_{j}}|$,
\label{enumerate:CZ-decom-3}
\item   $ \Big|\bigcup_{k_{j}} Q_{j,k_{j}} \Big| = \sum_{k_{j}} |Q_{j,k_{j}} | \le C (\lambda\gamma)^{\frac{-n}{mn-\alpha}}$,
\label{enumerate:CZ-decom-4}
\item   $ \|b_{j} \|_{L^{1}(\mathbb{R}^{n}) }  \le C $,
\label{enumerate:CZ-decom-5}
\item   $ \|g_{j} \|_{L^{s}(\mathbb{R}^{n}) } \le C (\lambda\gamma)^{\frac{n}{(mn-\alpha)s'}} $ for any $1\le s \le \infty$.  
\label{enumerate:CZ-decom-6}
\end{enumerate}

Let   $c_{j,k_{j}}$ be the center of cube $Q_{j,k_{j}}$ and $l(Q_{j,k_{j}})$ be its side length. Set $Q_{j,k_{j}}^{*} = 8\sqrt{n} Q_{j,k_{j}}$, $\Omega_{j}^{*} =\bigcup_{k_{j}} Q_{j,k_{j}}^{*} ~(j=1,\dots,m)$, and  $\Omega^{*}= \bigcup_{j=1}^{m} \Omega_{j}^{*} $. Now let
\begin{align*}
E_{1} &=  \{x\in \mathbb{R}^{n}: |T_{\alpha}(g_{1},g_{2},\dots, g_{m})(x)|> \lambda/2^{m}\},   \\
E_{2} &=  \{x\in \mathbb{R}^{n}\setminus  \Omega^{*}: |T_{\alpha}(b_{1},g_{2},\dots, g_{m})(x)|> \lambda/2^{m}\},   \\
E_{3} &=  \{x\in \mathbb{R}^{n}\setminus \Omega^{*}: |T_{\alpha}(g_{1},b_{2},\dots, g_{m})(x)|> \lambda/2^{m}\},   \\
\cdots  & \cdots \\
E_{2^{m}} &=  \{x\in \mathbb{R}^{n}\setminus \Omega^{*}: |T_{\alpha}(b_{1},b_{2},\dots, b_{m})(x)|> \lambda/2^{m}\},
\end{align*}
where each  $E_{s} =  \{x\in \mathbb{R}^{n}\setminus \Omega^{*}: |T_{\alpha}(h_{1},h_{2},\dots, h_{m})(x)|> \lambda/2^{m}\}$ with $h_{j} \in \{g_{j},b_{j}\}$ and all the sets  $E_{s}$ are distinct.

It follows from property   \labelcref{enumerate:CZ-decom-4} that
\begin{align*}
|\Omega^{*} | &\le \sum_{j=1}^{m} |\Omega_{j}^{*} |  \le  C  \sum_{j=1}^{m}  \sum_{k_{j}} |Q_{j,k_{j}} | \le C (\lambda\gamma)^{\frac{-n}{mn-\alpha}}.
\end{align*}

 Let us first estimate $E_{1}$ which is the easiest.Note that $ 1/q =  1/p  - \alpha/n$, by the Chebyshev's inequality, the   $L^{p_1}(\mathbb{R}^{n}) \times  \cdots\times L^{p_m}(\mathbb{R}^{n})\to  L^{q,\infty}(\mathbb{R}^{n})$ boundedness of   $T_{\alpha}$ and property \labelcref{enumerate:CZ-decom-6} to obtain
\begin{align*}
|E_{1} | &=  |\{x\in \mathbb{R}^{n}: |T_{\alpha}(g_{1},g_{2},\dots, g_{m})(x)|> \lambda/2^{m}\} |  \\
  &\le \Big(\frac{2^{m}B}{\lambda}\Big)^{q}  \prod_{j=1}^{m} \|g_{j} \|_{L^{p_{j}}(\mathbb{R}^{n}) }^{q}   \\
   &\le \Big(\frac{2^{m}B}{\lambda}\Big)^{q}  \prod_{j=1}^{m}  (\lambda\gamma)^{\frac{nq}{(mn-\alpha)p'_{j}}}   \\
&\le  C \Big(\frac{B}{\lambda}\Big)^{q}    (\lambda\gamma)^{(m-\frac{1}{p})\frac{nq}{(mn-\alpha)}}   \\
&=C  B^{q}  \lambda^{-\frac{n}{mn-\alpha}}  \gamma^{q-\frac{n}{mn-\alpha}}.
\end{align*}

Since
\begin{align}\label{equ:weak-CZD-Es}
\begin{split}
 |\{x\in \mathbb{R}^{n}: |T_{\alpha}(\vec{f})(x)|> \lambda\}|    
 &\le \sum_{s=1}^{2^{m}} |E_{s} | + C |\Omega^{*} | \\
 &\le \sum_{s=2}^{2^{m}} |E_{s} | + C  B^{q}  \lambda^{-\frac{n}{mn-\alpha}}  \gamma^{q-\frac{n}{mn-\alpha}} +C (\lambda\gamma)^{\frac{-n}{mn-\alpha}}.
\end{split}
\end{align}
Thus, it will need  to give the appropriate estimates for each $|E_{s} |$ with $2\le s\le 2^{m}$ to guarantee
the validity of \labelcref{equ:weak-norm-estimate}.

For the sake of clarity, we split the proof into two cases.

 \textbf{Case 1:} when $m=2$,  $\Omega_{1}^{*} =\bigcup_{k_{1}} Q_{1,k_{1}}^{*}$,   $\Omega_{2}^{*} =\bigcup_{k_{2}} Q_{2,k_{2}}^{*}$,   $\Omega^{*}= \Omega_{1}^{*}\bigcup\Omega_{2}^{*}  $, and   $\mathrm{d}\vec{y} =\mathrm{d}y_{1} \mathrm{d}y_{2}$. There leaves only the following three terms to be considered
\begin{align*}
E_{2} &=  \{x\in \mathbb{R}^{n}\setminus  \Omega^{*}: |T_{\alpha}(b_{1},g_{2})(x)|> \lambda/4\},   \\
E_{3} &=  \{x\in \mathbb{R}^{n}\setminus \Omega^{*}: |T_{\alpha}(g_{1},b_{2})(x)|> \lambda/4\},   \\
E_{4} &=  \{x\in \mathbb{R}^{n}\setminus \Omega^{*}: |T_{\alpha}(b_{1},b_{2})(x)|> \lambda/4\}.
\end{align*}

The following, for $s=2,3,4$, will  show that
\begin{align}\label{equ:weak-CZD-Es-2}
|E_{s}| &\le  C A \Big(\frac{\gamma}{\lambda}\Big)^{\frac{n}{2n-\alpha}}\gamma^{\frac{-\alpha}{2n-\alpha}}.
\end{align}

Now, for the term $|E_{2} |$, by Chebyshev's inequality and property \labelcref{enumerate:CZ-decom-2}, we have
\begin{align} \label{equ:weak-CZD-E2}
|E_{2}| &=  |\{x\in \mathbb{R}^{n}\setminus  \Omega^{*}: |T_{\alpha}(b_{1},g_{2})(x)|> \lambda/4\}|    \notag \\
 &\le \dint_{\{x\in \mathbb{R}^{n}\setminus  \Omega^{*}: |T_{\alpha}(b_{1},g_{2})(x)|> \lambda/4\}} \dfrac{|T_{\alpha}(b_{1},g_{2})(x)|}{\lambda/4}   \mathrm{d}x    \notag \\
  &\le  \frac{4}{\lambda}  \sum_{k_{1}}\dint_{  \mathbb{R}^{n}\setminus  \Omega^{*}}  |T_{\alpha}(b_{1,k_{1}},g_{2})(x)| \mathrm{d}x \\
  &\le  \frac{4}{\lambda}  \sum_{k_{1}} \dint_{  \mathbb{R}^{n}\setminus  \Omega^{*}}   \Big|\dint_{  (\mathbb{R}^{n})^{2}} \Big(K_{\alpha}(x,y_{1},y_{2})-K_{\alpha}(x,c_{1,k_{1}},y_{2})  \Big) b_{1,k_{1}}(y_{1}) g_{2}(y_{2})\mathrm{d}\vec{y} \Big| \mathrm{d}x    \notag \\
  &\le  \frac{4  \|g_{2} \|_{L^{\infty}(\mathbb{R}^{n}) } }{\lambda} \sum_{k_{1}} \dint_{Q_{1,k_{1}}}  | b_{1,k_{1}}(y_{1})|   \dint_{  \mathbb{R}^{n}} \dint_{  \mathbb{R}^{n}\setminus  \Omega^{*}}  \Big|K_{\alpha}(x,y_{1},y_{2})-K_{\alpha}(x,c_{1,k_{1}},y_{2})  \Big|  \mathrm{d}x \mathrm{d}\vec{y}.             \notag
\end{align}

For any fixed $k_{1}$,  let $\mathcal {Q}_{1,k_{1}}^{i}= (2^{i+2} \sqrt{n}  Q_{1,k_{1}}) \setminus (2^{i+1} \sqrt{n} Q_{1,k_{1}}) $ with $i=1,2,\dots$. Clearly we have  $\mathbb{R}^{n}\setminus  \Omega^{*} \subset \mathbb{R}^{n}\setminus  Q_{1,k_{1}}^{*} \subset \bigcup_{i=1}^{\infty} \mathcal {Q}_{1,k_{1}}^{i}$.   For any $y_{1} \in Q_{1,k_{1}}$ and  $y_{2} \in \mathbb{R}^{n}$, since $\omega$ is nondecreasing, then it follows from \labelcref{equ:w-CZK-frac-regularity-2} that
\begin{align} \label{equ:weak-CZD-E2-kernel}
&\; \dint_{  \mathbb{R}^{n}\setminus  \Omega^{*}}  \Big|K_{\alpha}(x,y_{1},y_{2})-K_{\alpha}(x,c_{1,k_{1}},y_{2})  \Big|  \mathrm{d}x
\le A \dint_{  \mathbb{R}^{n}\setminus  \Omega^{*}}  \dfrac{ \omega \Big( \frac{|y_{1}-c_{1,k_{1}}|}{ |x-y_{1}|+|x-y_{2}|} \Big) }{\Big (|x-y_{1}| + |x-y_{2}| \Big)^{2n-\alpha}}   \mathrm{d}x     \notag\\
 &\le A  \sum_{i=1}^{\infty}  \dint_{\mathcal {Q}_{1,k_{1}}^{i}}  \dfrac{ \omega \Big( \frac{|y_{1}-c_{1,k_{1}}|}{ |x-y_{1}|+|x-y_{2}|} \Big) }{\Big (|x-y_{1}| + |x-y_{2}| \Big)^{2n-\alpha}}   \mathrm{d}x  \\
&\le A  \sum_{i=1}^{\infty} \omega(2^{-i}) \dint_{\mathcal {Q}_{1,k_{1}}^{i}}  \dfrac{ 1 }{\Big (|x-y_{1}| + |x-y_{2}| \Big)^{2n-\alpha}}   \mathrm{d}x,         \notag
\end{align}
where in the last step we use the facts that, for $x \in \mathcal {Q}_{1,k_{1}}^{i}$  and any $y_{1} \in Q_{1,k_{1}}$,  there has
\begin{align*}
|y_{1}-c_{1,k_{1}}| \le  \frac{1}{2} \sqrt{n} l( Q_{1,k_{1}}) \ & \text{and} \  |x-y_{1}| \ge  2^{i-1} \sqrt{n} l( Q_{1,k_{1}}) .
\end{align*}
Substituting \eqref{equ:weak-CZD-E2-kernel} into \eqref{equ:weak-CZD-E2},
note that the fact (see \eqref{inequ:(m-1)-CZK})  
\begin{align}\label{inequ:(m-1)-CZK-1}
 \dint_{  \mathbb{R}^{n}} \dfrac{1}{\Big (|x-y_{1}| + |x-y_{2}| \Big)^{2n-\alpha}} \mathrm{d}y_{2}  &\le  \dfrac{C }{|x-y_{1}|  ^{n-\alpha}},
\end{align}
 and applying properties \labelcref{enumerate:CZ-decom-3,enumerate:CZ-decom-4,enumerate:CZ-decom-6},  we have
\begin{align*}
|E_{2}|  &\le  \frac{4  \|g_{2} \|_{L^{\infty}(\mathbb{R}^{n}) } }{\lambda} \sum_{k_{1}} \dint_{Q_{1,k_{1}}}  | b_{1,k_{1}}(y_{1})|   \dint_{  \mathbb{R}^{n}} \dint_{  \mathbb{R}^{n}\setminus  \Omega^{*}}  \Big|K_{\alpha}(x,y_{1},y_{2})-K_{\alpha}(x,c_{1,k_{1}},y_{2})  \Big|  \mathrm{d}x \mathrm{d}\vec{y} \\
&\le  \frac{C A}{\lambda} (\lambda\gamma)^{\frac{n}{2n-\alpha}}  \sum_{k_{1}} \sum_{i=1}^{\infty} \omega(2^{-i}) \dint_{Q_{1,k_{1}}}  | b_{1,k_{1}}(y_{1})|   \dint_{\mathcal {Q}_{1,k_{1}}^{i}}  \dfrac{1}{|x-y_{1}|^{n-\alpha}}   \mathrm{d}x \mathrm{d}y_{1} \\
&\le \frac{C A}{\lambda} (\lambda\gamma)^{\frac{n}{2n-\alpha}}  \sum_{k_{1}} \sum_{i=1}^{\infty} \omega(2^{-i})  \dint_{Q_{1,k_{1}}}  | b_{1,k_{1}}(y_{1})|   \dint_{2^{i+2} \sqrt{n}  Q_{1,k_{1}} }  \dfrac{1}{|2^{i-1} \sqrt{n}  Q_{1,k_{1}}|^{1-\alpha/n}}   \mathrm{d}x \mathrm{d}y_{1}    \\
&\le  \frac{C A}{\lambda} (\lambda\gamma)^{\frac{n}{2n-\alpha}}  \sum_{k_{1}} \sum_{i=1}^{\infty} \omega(2^{-i})2^{-i\alpha} |Q_{1,k_{1}}|^{\alpha/n}\dint_{Q_{1,k_{1}}}  | b_{1,k_{1}}(y_{1})|   \mathrm{d}y_{1} \\
&\le   C A \Big(\frac{\gamma}{\lambda}\Big)^{\frac{n}{2n-\alpha}}\gamma^{\frac{-\alpha}{2n-\alpha}}.
\end{align*}

Similarly, we can obtain that      $|E_{3}| \le   C A \big(\frac{\gamma}{\lambda}\big)^{\frac{n}{2n-\alpha}}\gamma^{\frac{-\alpha}{2n-\alpha}}$.

The following estimate   $|E_{4}|$. By Chebyshev's inequality, properties \labelcref{enumerate:CZ-decom-1,enumerate:CZ-decom-2}, we have
\begin{align*}
|E_{4}| &=  |\{x\in \mathbb{R}^{n}\setminus \Omega^{*}: |T_{\alpha}(b_{1},b_{2})(x)|> \lambda/4\}|   \\
  &\le  \frac{4}{\lambda}  \sum_{k_{1},k_{2}}\dint_{  \mathbb{R}^{n}\setminus  \Omega^{*}}  |T_{\alpha}(b_{1,k_{1}},b_{2,k_{2}})(x)| \mathrm{d}x \\
  &\le \frac{4}{\lambda}  \sum_{k_{1},k_{2}} \dint_{  \mathbb{R}^{n}\setminus  \Omega^{*}}   \Big|\dint_{  (\mathbb{R}^{n})^{2}} \Big(K_{\alpha}(x,y_{1},y_{2})-K_{\alpha}(x,c_{1,k_{1}},y_{2})  \Big) b_{1,k_{1}}(y_{1}) b_{2,k_{2}}(y_{2})\mathrm{d}\vec{y} \Big| \mathrm{d}x \\
  &\le \frac{4}{\lambda}  \sum_{k_{1},k_{2}} \dint_{  \mathbb{R}^{n}\setminus  \Omega^{*}}  \dint_{Q_{2,k_{2}}}  \dint_{Q_{1,k_{1}}}  \Big|K_{\alpha}(x,y_{1},y_{2})-K_{\alpha}(x,c_{1,k_{1}},y_{2})  \Big|  |b_{1,k_{1}}(y_{1}) b_{2,k_{2}}(y_{2})| \mathrm{d}\vec{y}   \mathrm{d}x \\
  &\le   \frac{4}{\lambda}  \sum_{k_{1},k_{2}}   \dint_{Q_{2,k_{2}}}  \dint_{Q_{1,k_{1}}} \Big(  \dint_{  \mathbb{R}^{n}\setminus  \Omega^{*}}  \Big|K_{\alpha}(x,y_{1},y_{2})-K_{\alpha}(x,c_{1,k_{1}},y_{2})  \Big|  \mathrm{d}x \Big)  |b_{1,k_{1}}(y_{1}) b_{2,k_{2}}(y_{2})|   \mathrm{d}\vec{y}.
\end{align*}

For any fixed $k_{2}$,  let $\mathcal {Q}_{1,k_{1}}^{i}$ be as above, denote by $\mathcal {Q}_{2,k_{2}}^{h}= (2^{h+2} \sqrt{n}  Q_{2,k_{2}}) \setminus (2^{h+1} \sqrt{n} Q_{2,k_{2}}) $ with $h=1,2,\dots$. Then
\begin{align*}
 \mathbb{R}^{n}\setminus  \Omega^{*} &\subset \mathbb{R}^{n}\setminus  \big(Q_{1,k_{1}}^{*}\bigcup Q_{2,k_{2}}^{*} \big) \subset \bigcup_{h=1}^{\infty} \bigcup_{i=1}^{\infty} \big(\mathcal{Q}_{1,k_{1}}^{i}\bigcap \mathcal {Q}_{2,k_{2}}^{h} \big).
\end{align*}
For any $( y_{1} , y_{2} ) \in Q_{1,k_{1}} \times Q_{2,k_{2}}$, similar to \eqref{equ:weak-CZD-E2-kernel}, we have
\begin{align} \label{equ:weak-CZD-E4-kernel}
 &\; \dint_{  \mathbb{R}^{n}\setminus  \Omega^{*}}  \Big|K_{\alpha}(x,y_{1},y_{2})-K_{\alpha}(x,c_{1,k_{1}},y_{2})  \Big|  \mathrm{d}x
\le A \dint_{  \mathbb{R}^{n}\setminus  \Omega^{*}}  \dfrac{ \omega \Big( \frac{|y_{1}-c_{1,k_{1}}|}{ |x-y_{1}|+|x-y_{2}|} \Big) }{\Big (|x-y_{1}| + |x-y_{2}| \Big)^{2n-\alpha}}   \mathrm{d}x          \notag \\
 &\le A  \sum_{h=1}^{\infty} \sum_{i=1}^{\infty}  \dint_{\mathcal{Q}_{1,k_{1}}^{i}\bigcap \mathcal {Q}_{2,k_{2}}^{h}}  \dfrac{ \omega \Big( \frac{|y_{1}-c_{1,k_{1}}|}{ |x-y_{1}|+|x-y_{2}|} \Big) }{\Big (|x-y_{1}| + |x-y_{2}| \Big)^{2n-\alpha}}   \mathrm{d}x  \\
&\le A  \sum_{h=1}^{\infty}\sum_{i=1}^{\infty} \omega(2^{-i}) \dint_{\mathcal{Q}_{1,k_{1}}^{i}\bigcap \mathcal {Q}_{2,k_{2}}^{h}}  \dfrac{ 1 }{\Big (|x-y_{1}| + |x-y_{2}| \Big)^{2n-\alpha}}   \mathrm{d}x.        \notag
\end{align}
Note that, for any  $x \in\mathcal{Q}_{1,k_{1}}^{i}\bigcap \mathcal {Q}_{2,k_{2}}^{h}$ and $( y_{1} , y_{2} ) \in Q_{1,k_{1}} \times Q_{2,k_{2}}$, there has
\begin{align*}
|x-y_{1}| \approx  2^{i+1} \sqrt{n} l( Q_{1,k_{1}}) \ & \text{and} \  |x-y_{2}| \approx  2^{h+1} \sqrt{n} l( Q_{2,k_{2}}),
\end{align*}
then, for any $( y_{1} , y_{2} ) \in Q_{1,k_{1}} \times Q_{2,k_{2}}$,  the following holds
\begin{align}\label{equ:weak-CZD-E4-kernel-1}
\begin{split}
\dint_{\mathcal{Q}_{1,k_{1}}^{i}\bigcap \mathcal {Q}_{2,k_{2}}^{h}}  \dfrac{ 1 }{\Big (|x-y_{1}| + |x-y_{2}| \Big)^{2n-\alpha}}   \mathrm{d}x  &\approx \dfrac{ \big|\mathcal{Q}_{1,k_{1}}^{i}\bigcap \mathcal {Q}_{2,k_{2}}^{h} \big| }{\Big (2^{i+1} \sqrt{n} l( Q_{1,k_{1}})+  2^{h+1} \sqrt{n} l( Q_{2,k_{2}}) \Big)^{2n-\alpha}}     \\
&:= \mathcal{H}(i,,k_{1};h,,k_{2}).
\end{split}
\end{align}
From estimates \labelcref{equ:weak-CZD-E4-kernel,equ:weak-CZD-E4-kernel-1} to obtain
\begin{align} \label{equ:weak-CZD-E4-kernel-2}
   \dint_{  \mathbb{R}^{n}\setminus  \Omega^{*}}  \Big|K_{\alpha}(x,y_{1},y_{2})-K_{\alpha}(x,c_{1,k_{1}},y_{2})  \Big|  \mathrm{d}x
&\le C A  \sum_{h=1}^{\infty}\sum_{i=1}^{\infty} \omega(2^{-i}) \mathcal{H}(i,,k_{1};h,,k_{2}).
\end{align}
Then, by \eqref{equ:weak-CZD-E4-kernel-2}  and property \labelcref{enumerate:CZ-decom-3} one has
\begin{align*}
|E_{4}| &\le   \frac{4}{\lambda}  \sum_{k_{1},k_{2}}   \dint_{Q_{2,k_{2}}}  \dint_{Q_{1,k_{1}}} \Big(  \dint_{  \mathbb{R}^{n}\setminus  \Omega^{*}}  \Big|K_{\alpha}(x,y_{1},y_{2})-K_{\alpha}(x,c_{1,k_{1}},y_{2})  \Big|  \mathrm{d}x \Big)  |b_{1,k_{1}}(y_{1}) b_{2,k_{2}}(y_{2})|   \mathrm{d}\vec{y} \\
  &\le   \frac{CA}{\lambda}  \sum_{k_{1},k_{2}}   \dint_{Q_{2,k_{2}}}  \dint_{Q_{1,k_{1}}} \Big(   \sum_{h=1}^{\infty}\sum_{i=1}^{\infty} \omega(2^{-i}) \mathcal{H}(i,,k_{1};h,,k_{2}) \Big)  |b_{1,k_{1}}(y_{1}) b_{2,k_{2}}(y_{2})|   \mathrm{d}\vec{y}   \\
  &\le    \frac{CA}{\lambda}  (\lambda\gamma)^{\frac{n}{2n-\alpha}} (\lambda\gamma)^{\frac{n}{2n-\alpha}}    \sum_{i=1}^{\infty} \omega(2^{-i})  \sum_{k_{1},k_{2}}  |Q_{1,k_{1}}| |Q_{2,k_{2}}|  \Big(   \sum_{h=1}^{\infty} \mathcal{H}(i,,k_{1};h,,k_{2}) \Big)     \\
 &\le    \frac{CA}{\lambda}  (\lambda\gamma)^{\frac{2n}{2n-\alpha}}      \sum_{i=1}^{\infty} \omega(2^{-i})  \sum_{k_{1},k_{2}} \dint_{Q_{1,k_{1}}} \dint_{Q_{2,k_{2}}} \Big(   \sum_{h=1}^{\infty}  \mathcal{H}(i,,k_{1};h,,k_{2}) \Big)     \mathrm{d}\vec{y}.
%
\end{align*}
Applying \labelcref{equ:weak-CZD-E4-kernel-1} again and noting that for any fixed $k_{2}$, the sequence $\{ \mathcal {Q}_{2,k_{2}}^{h} \}_{h=1}^{\infty}$  is pairwise disjoint, it follows from property \labelcref{enumerate:CZ-decom-4} and  estimate \eqref{inequ:(m-1)-CZK-1} that
\begin{align*}
|E_{4}| &\le     \frac{CA}{\lambda}  (\lambda\gamma)^{\frac{2n}{2n-\alpha}}   \sum_{i=1}^{\infty} \omega(2^{-i})   \sum_{k_{1},k_{2}} \dint_{Q_{1,k_{1}}} \dint_{Q_{2,k_{2}}} \Big(   \sum_{h=1}^{\infty}  \mathcal{H}(i,,k_{1};h,,k_{2}) \Big)     \mathrm{d}\vec{y} \\
&\le     \frac{CA}{\lambda}  (\lambda\gamma)^{\frac{2n}{2n-\alpha}} \sum_{i=1}^{\infty} \omega(2^{-i})   \sum_{k_{1},k_{2}} \dint_{Q_{1,k_{1}}} \dint_{Q_{2,k_{2}}} \Big(     \dint_{\mathcal{Q}_{1,k_{1}}^{i} }  \dfrac{ 1 }{\Big (|x-y_{1}| + |x-y_{2}| \Big)^{2n-\alpha}}   \mathrm{d}x   \Big)     \mathrm{d}\vec{y} \\
&\le     \frac{CA}{\lambda}  (\lambda\gamma)^{\frac{2n}{2n-\alpha}} \sum_{i=1}^{\infty} \omega(2^{-i})   \sum_{k_{1}} \dint_{Q_{1,k_{1}}} \dint_{\mathcal{Q}_{1,k_{1}}^{i} }     \dfrac{ 1 }{ |x-y_{1}| ^{n-\alpha}}    \mathrm{d}x   \mathrm{d}y_{1}   \\
&\le    \frac{CA}{\lambda}  (\lambda\gamma)^{\frac{2n}{2n-\alpha}} \sum_{i=1}^{\infty} \omega(2^{-i})   \sum_{k_{1}} \dint_{Q_{1,k_{1}}}   \dint_{2^{i+2} \sqrt{n}  Q_{1,k_{1}} }  \dfrac{1}{|2^{i} \sqrt{n}  Q_{1,k_{1}}|^{1-\alpha/n}}   \mathrm{d}x \mathrm{d}y_{1}    \\
&\le    \frac{CA}{\lambda}  (\lambda\gamma)^{\frac{2n}{2n-\alpha}} \sum_{i=1}^{\infty} \omega(2^{-i}) 2^{-i\alpha}   \sum_{k_{1}} |Q_{1,k_{1}}|^{1+\alpha/n}      \\
&\le  C A \Big(\frac{\gamma}{\lambda}\Big)^{\frac{n}{2n-\alpha}}\gamma^{\frac{-\alpha}{2n-\alpha}}.
\end{align*}

It is easy to see that the constants $C$ involved depend only on $m$, $n$ and $|\omega| _{\dini ( 1 )}$. So, \eqref{equ:weak-CZD-Es-2} is proven. Set $\gamma = ( A + B ) ^{-1}$, it follows from \labelcref{equ:weak-CZD-Es,equ:weak-CZD-Es-2} that
\begin{align*}
&\; |\{x\in \mathbb{R}^{n}: |T_{\alpha}(f_{1},f_{2})(x)|> \lambda\}|\le \sum_{s=2}^{2^{2}} |E_{s} |  + C  B^{q}  \lambda^{-\frac{n}{2n-\alpha}}  \gamma^{q-\frac{n}{2n-\alpha}} +C (\lambda\gamma)^{\frac{-n}{2n-\alpha}}     \\
 &\le  C A \Big(\frac{\gamma}{\lambda}\Big)^{\frac{n}{2n-\alpha}}\gamma^{\frac{-\alpha}{2n-\alpha}} +  C  B^{q}  \lambda^{-\frac{n}{2n-\alpha}}  \gamma^{q-\frac{n}{2n-\alpha}} +C (\lambda\gamma)^{\frac{-n}{2n-\alpha}}     \\
 &\le  C \lambda^{-\frac{n}{2n-\alpha}} \Big( A  (A+B)^{\frac{\alpha-n}{2n-\alpha}}  +     B^{q}  (A+B)^{\frac{n}{2n-\alpha}-q} +  (A+B)^{\frac{n}{2n-\alpha}}  \Big)   \\
  &\le  C  \Big(\frac{A+B}{\lambda}\Big)^{\frac{n}{2n-\alpha}} \Big( A  (A+B)^{\frac{\alpha-2n}{2n-\alpha}}  +     B^{q}  (A+B)^{-q} +  1  \Big)   \\
   &\le  C  \Big(\frac{A+B}{\lambda}\Big)^{\frac{n}{2n-\alpha}} \Big( A  (A+B)^{-1}  +     B^{q}  (A+B)^{-q} +  1  \Big)   \\
 &\le  C  \Big(\frac{A+B}{\lambda}\Big)^{\frac{n}{2n-\alpha}},
\end{align*}
which is the desired result. The proof of the case $m = 2$ is completed.

 \textbf{Case 2:} when $m\ge 3$,  we need to estimate $| E_{s} |$ for $2 \le  s \le 2^{m} $. Suppose that for some $1 \le \ell  \le m$, we have $\ell$ bad functions and $m-\ell$ good functions appearing in    $T_{\alpha}(h_{1},h_{2},\dots, h_{m})$ with $h_{j} \in \{g_{j},b_{j}\}$. For matters of simplicity, without loss of generality, we may  assume that the bad functions appear at the entries $1,\dots, \ell$, and denote the corresponding term by $| E _{s}^{(\ell)}|$
 to distinguish it from the other terms.  The following will consider
\begin{align*}
|E_{s}^{(\ell)}| &=  |\{x\in \mathbb{R}^{n}\setminus \Omega^{*}: |T_{\alpha}(b_{1},\dots,b_{\ell}, g_{\ell+1},\dots, g_{m})(x)|> \lambda/2^{m}\}|,
\end{align*}
and the other terms can be estimated similarly. We will show
\begin{align}\label{equ:weak-CZD-Es-m}
|E_{s}^{(\ell)}| &\le  C A \Big(\frac{\gamma}{\lambda}\Big)^{\frac{n}{mn-\alpha}}\gamma^{\frac{(m-2)n-\alpha}{mn-\alpha}}.
\end{align}

Recall that $\supp(b_{1,k_{1}}) \subset Q_{1,k_{1}}$ and $c_{1,k_{1}}$ be the center of cube $Q_{1,k_{1}}$. Denote by $\prod_{r=1}^{\ell} Q_{r,k_{r}} =Q_{1,k_{1}}\times \cdots \times Q_{\ell,k_{\ell}}$ and  $\vec{y}_{*}=(c_{1,k_{1}}, y_{2}, \dots, y_{m})$  for simplicity. Then it follows from properties \labelcref{enumerate:CZ-decom-2,enumerate:CZ-decom-6}  that, for any $x\in  \mathbb{R}^{n}\setminus  \Omega^{*}$,
\begin{align*}
&\; |T_{\alpha}(b_{1},\dots,b_{\ell},g_{\ell+1},\dots, g_{m})(x)|
\le  \sum_{k_{1},\dots,k_{\ell}}  \Big|\dint_{  (\mathbb{R}^{n})^{m}}  K_{\alpha}(x,\vec{y})  \prod_{r=1}^{\ell} b_{r,k_{r}}(y_{r}) \prod_{r=\ell+1}^{m} g_{r}(y_{r})\mathrm{d}\vec{y} \Big|   \\
&\le  \sum_{k_{1},\dots,k_{\ell}}  \dint_{  (\mathbb{R}^{n})^{m}} \Big| K_{\alpha}(x,\vec{y}) - K_{\alpha}(x,\vec{y}_{*})\Big| \prod_{r=1}^{\ell} |b_{r,k_{r}}(y_{r})| \prod_{r=\ell+1}^{m} |g_{r}(y_{r})|\mathrm{d}\vec{y}     \\
&\le C  \prod_{r=\ell+1}^{m} \|g_{r}\|_{L^{\infty}(\mathbb{R}^{n}) }  \sum_{k_{1},\dots,k_{\ell}}  \dint_{  (\mathbb{R}^{n})^{m}} \Big| K_{\alpha}(x,\vec{y}) - K_{\alpha}(x,\vec{y}_{*})\Big| \prod_{r=1}^{\ell} |b_{r,k_{r}}(y_{r})|  \mathrm{d}\vec{y}    \\
&\le C (\lambda\gamma)^{\frac{n(m-\ell)}{mn-\alpha}}  \sum_{k_{1},\dots,k_{\ell}}  \dint_{  (\mathbb{R}^{n})^{m}} \Big| K_{\alpha}(x,\vec{y}) - K_{\alpha}(x,\vec{y}_{*})\Big| \prod_{r=1}^{\ell} |b_{r,k_{r}}(y_{r})|  \mathrm{d}\vec{y}.
\end{align*}
This together with Chebychev's inequality gives
\begin{align*}
|E_{s}^{(\ell)}| &=  |\{x\in \mathbb{R}^{n}\setminus \Omega^{*}: |T_{\alpha}(b_{1},\dots,b_{\ell}, g_{\ell+1},\dots, g_{m})(x)|> \lambda/2^{m}\}|   \\
&\le \frac{2^{m}}{\lambda}  \dint_{\mathbb{R}^{n}\setminus \Omega^{*}} |T_{\alpha}(b_{1},\dots,b_{\ell}, g_{\ell+1},\dots, g_{m})(x)|  \mathrm{d}x    \\
&\le \frac{C}{\lambda} (\lambda\gamma)^{\frac{n(m-\ell)}{mn-\alpha}}  \dint_{\mathbb{R}^{n}\setminus \Omega^{*}} \Big(\sum_{k_{1},\dots,k_{\ell}}  \dint_{  (\mathbb{R}^{n})^{m}} \Big| K_{\alpha}(x,\vec{y}) - K_{\alpha}(x,\vec{y}_{*})\Big| \prod_{r=1}^{\ell} |b_{r,k_{r}}(y_{r})|  \mathrm{d}\vec{y} \Big)  \mathrm{d}x    \\
&\le \frac{C}{\lambda} (\lambda\gamma)^{\frac{n(m-\ell)}{mn-\alpha}} \sum_{k_{1},\dots,k_{\ell}}  \dint_{  (\mathbb{R}^{n})^{m}} \Big(\dint_{\mathbb{R}^{n}\setminus \Omega^{*}} \big| K_{\alpha}(x,\vec{y}) - K_{\alpha}(x,\vec{y}_{*})\big| \mathrm{d}x \Big) \prod_{r=1}^{\ell} |b_{r,k_{r}}(y_{r})|  \mathrm{d}\vec{y} \\
&\le \frac{C}{\lambda} (\lambda\gamma)^{\frac{n(m-\ell)}{mn-\alpha}} \sum_{k_{1},\dots,k_{\ell}}  \dint_{  (\mathbb{R}^{n})^{m-\ell}}  \dint_{\prod_{r=1}^{\ell} Q_{r,k_{r}} }\prod_{r=1}^{\ell} |b_{r,k_{r}}(y_{r})| \Big(\dint_{\mathbb{R}^{n}\setminus \Omega^{*}} \big| K_{\alpha}(x,\vec{y}) - K_{\alpha}(x,\vec{y}_{*})\big| \mathrm{d}x \Big) \mathrm{d}\vec{y}.
\end{align*}
Let $\mathcal {Q}_{r,k_{r}}^{i_{r}}= (2^{i_{r}+2} \sqrt{n}  Q_{r,k_{r}}) \setminus (2^{i_{r}+1} \sqrt{n} Q_{r,k_{r}}) $ for $r=1,2,\dots,\ell$ and $i_{r}=1,2,\dots$. Then
\begin{align*}
 \mathbb{R}^{n}\setminus  \Omega^{*} &\subset \mathbb{R}^{n}\setminus  \big( \bigcup_{r=1}^{\ell} Q_{r,k_{r}}^{*} \big)    \\
 &\subset \bigcup_{i_{1}=1}^{\infty} \cdots\bigcup_{i_{\ell}=1}^{\infty} \big(\mathcal {Q}_{1,k_{1}}^{i_{1}}\bigcap \cdots \bigcap  \mathcal {Q}_{\ell,k_{\ell}}^{i_{\ell}} \big) = \bigcup_{i_{1},\dots,i_{\ell}=1}^{\infty} \big(\bigcap_{r=1}^{\ell}  \mathcal {Q}_{r,k_{r}}^{i_{r}} \big).
\end{align*}
For any $( y_{1},\dots , y_{\ell} ) \in \prod_{r=1}^{\ell} Q_{r,k_{r}}$ and any $( y_{\ell+1},\dots , y_{m} ) \in \mathbb{R}^{n})^{m-\ell}$,  applying \eqref{equ:w-CZK-frac-regularity-2} and the fact that $\omega$ is nondecreasing, similar to \labelcref{equ:weak-CZD-E2-kernel,equ:weak-CZD-E4-kernel}, we have
\begin{align*}
&\; \dint_{  \mathbb{R}^{n}\setminus  \Omega^{*}}  \Big| K_{\alpha}(x,\vec{y}) - K_{\alpha}(x,\vec{y}_{*})  \Big|  \mathrm{d}x
\le A \dint_{  \mathbb{R}^{n}\setminus  \Omega^{*}}  \dfrac{ \omega \Big( \frac{|y_{1}-c_{1,k_{1}}|}{\sum\limits_{j=1}^{m} |x-y_{j}|} \Big) }{\Big (\sum\limits_{j=1}^{m} |x-y_{j}| \Big)^{mn-\alpha}}     \mathrm{d}x     \notag\\
 &\le A  \sum\limits_{i_{1},\dots,i_{\ell}=1}^{\infty}  \dint_{\bigcap_{r=1}^{\ell}  \mathcal {Q}_{r,k_{r}}^{i_{r}}}  \dfrac{ \omega \Big( \frac{|y_{1}-c_{1,k_{1}}|}{ |x-y_{1}|} \Big) }{\Big (\sum\limits_{j=1}^{m} |x-y_{j}| \Big)^{mn-\alpha}}   \mathrm{d}x  \\
&\le A  \sum\limits_{i_{1},\dots,i_{\ell}=1}^{\infty} \omega(2^{-i_{1}}) \dint_{\bigcap_{r=1}^{\ell}  \mathcal {Q}_{r,k_{r}}^{i_{r}}}  \dfrac{ 1 }{\Big (\sum\limits_{j=1}^{m} |x-y_{j}|  \Big)^{mn-\alpha}}   \mathrm{d}x,         \notag
\end{align*}
Then, 
\begin{align*}
|E_{s}^{(\ell)}|  
&\le \frac{CA}{\lambda} (\lambda\gamma)^{\frac{n(m-\ell)}{mn-\alpha}} \sum_{k_{1},\dots,k_{\ell}} \sum\limits_{i_{1},\dots,i_{\ell}=1}^{\infty} \omega(2^{-i_{1}}) \dint_{  (\mathbb{R}^{n})^{m-\ell}}  \dint_{\prod_{r=1}^{\ell} Q_{r,k_{r}} }\prod_{r=1}^{\ell} |b_{r,k_{r}}(y_{r})|    \\
&\: \hspace{2em} \times \Big(\dint_{\bigcap_{r=1}^{\ell}  \mathcal {Q}_{r,k_{r}}^{i_{r}}}  \dfrac{ 1 }{\Big (\sum\limits_{j=1}^{m} |x-y_{j}|  \Big)^{mn-\alpha}}   \mathrm{d}x\Big) \mathrm{d}\vec{y}   \\
&\le \frac{CA}{\lambda} (\lambda\gamma)^{\frac{n(m-\ell)}{mn-\alpha}} \sum_{k_{1},\dots,k_{\ell}} \sum\limits_{i_{1},\dots,i_{\ell}=1}^{\infty} \omega(2^{-i_{1}})   \dint_{\prod_{r=1}^{\ell} Q_{r,k_{r}} }\prod_{r=1}^{\ell} |b_{r,k_{r}}(y_{r})|    \\
&\: \hspace{2em} \times \Big(\dint_{\bigcap_{r=1}^{\ell}  \mathcal {Q}_{r,k_{r}}^{i_{r}}} \Big( \dint_{  (\mathbb{R}^{n})^{m-\ell}} \dfrac{ 1 }{\Big (\sum\limits_{j=1}^{m} |x-y_{j}|  \Big)^{mn-\alpha}}  \mathrm{d}y_{\ell+1}\cdots \mathrm{d}y_{m} \Big) \mathrm{d}x\Big) \mathrm{d} y_{1} \cdots\mathrm{d} y_{\ell}   \\
&\le \frac{CA}{\lambda} (\lambda\gamma)^{\frac{n(m-\ell)}{mn-\alpha}} \sum_{k_{1},\dots,k_{\ell}} \sum\limits_{i_{1},\dots,i_{\ell}=1}^{\infty} \omega(2^{-i_{1}})   \dint_{\prod_{r=1}^{\ell} Q_{r,k_{r}} }\prod_{r=1}^{\ell} |b_{r,k_{r}}(y_{r})|    \\
&\: \hspace{2em} \times \Big(\dint_{\bigcap_{r=1}^{\ell}  \mathcal {Q}_{r,k_{r}}^{i_{r}}}   \dfrac{ 1 }{\Big (\sum\limits_{j=1}^{\ell} |x-y_{j}|  \Big)^{n\ell-\alpha}}    \mathrm{d}x\Big) \mathrm{d} y_{1} \cdots\mathrm{d} y_{\ell}.
\end{align*}
On the other hand, similar to \eqref{equ:weak-CZD-E4-kernel-1}, for any  $( y_{1},\dots , y_{\ell} ) \in \prod_{r=1}^{\ell} Q_{r,k_{r}}$, there has
\begin{equation}\label{equ:weak-CZD-Em-kernel-1}
\dint_{\bigcap_{r=1}^{\ell}  \mathcal {Q}_{r,k_{r}}^{i_{r}}}  \dfrac{ 1 }{\Big (\sum\limits_{r=1}^{\ell} |x-y_{r}|  \Big)^{n\ell-\alpha}}    \mathrm{d}x  \approx \dfrac{ \big|\bigcap_{r=1}^{\ell}  \mathcal {Q}_{r,k_{r}}^{i_{r}} \big| }{\Big (\sum\limits_{r=1}^{\ell} 2^{i_{r}+1} \sqrt{n} l( Q_{r,k_{r}})   \Big)^{n\ell-\alpha}}.
\end{equation}
Then by \eqref{equ:weak-CZD-Em-kernel-1} and the property \labelcref{enumerate:CZ-decom-3}, we have
\begin{align*}
|E_{s}^{(\ell)}| &\le \frac{CA}{\lambda} (\lambda\gamma)^{\frac{n(m-\ell)}{mn-\alpha}} \sum_{k_{1},\dots,k_{\ell}} \sum\limits_{i_{1},\dots,i_{\ell}=1}^{\infty} \omega(2^{-i_{1}})   \dint_{\prod_{r=1}^{\ell} Q_{r,k_{r}} }\prod_{r=1}^{\ell} |b_{r,k_{r}}(y_{r})|    \\
&\: \hspace{2em} \times \Big(\dint_{\bigcap_{r=1}^{\ell}  \mathcal {Q}_{r,k_{r}}^{i_{r}}}   \dfrac{ 1 }{\Big (\sum\limits_{j=1}^{m} |x-y_{r}|  \Big)^{n\ell-\alpha}}    \mathrm{d}x\Big) \mathrm{d} y_{1} \cdots\mathrm{d} y_{\ell}   \\
 &\le \frac{CA}{\lambda} (\lambda\gamma)^{\frac{n(m-\ell)}{mn-\alpha}} \sum_{k_{1},\dots,k_{\ell}} \sum\limits_{i_{1},\dots,i_{\ell}=1}^{\infty} \omega(2^{-i_{1}})   \dint_{\prod_{r=1}^{\ell} Q_{r,k_{r}} }\prod_{r=1}^{\ell} |b_{r,k_{r}}(y_{r})|    \\
&\: \hspace{2em} \times  \dfrac{ \big|\bigcap_{r=1}^{\ell}  \mathcal {Q}_{r,k_{r}}^{i_{r}} \big| }{\Big (\sum\limits_{r=1}^{\ell} 2^{i_{r}+1} \sqrt{n} l( Q_{r,k_{r}})   \Big)^{n\ell-\alpha}}  \mathrm{d} y_{1} \cdots\mathrm{d} y_{\ell}   \\
 &\le \frac{CA}{\lambda} (\lambda\gamma)^{\frac{nm}{mn-\alpha}} \sum_{k_{1},\dots,k_{\ell}} \sum\limits_{i_{1},\dots,i_{\ell}=1}^{\infty} \omega(2^{-i_{1}})   \prod_{r=1}^{\ell} |Q_{r,k_{r}}|    \dfrac{ \big|\bigcap_{r=1}^{\ell}  \mathcal {Q}_{r,k_{r}}^{i_{r}} \big| }{\Big (\sum\limits_{r=1}^{\ell} 2^{i_{r}+1} \sqrt{n} l( Q_{r,k_{r}})   \Big)^{n\ell-\alpha}}    \\
 &\le \frac{CA}{\lambda} (\lambda\gamma)^{\frac{nm}{mn-\alpha}} \sum_{k_{1},\dots,k_{\ell}} \sum\limits_{i_{1},\dots,i_{\ell}=1}^{\infty} \omega(2^{-i_{1}})   \dint_{\prod_{r=1}^{\ell} Q_{r,k_{r}}}    \dfrac{ \big|\bigcap_{r=1}^{\ell}  \mathcal {Q}_{r,k_{r}}^{i_{r}} \big| }{\Big (\sum\limits_{r=1}^{\ell} 2^{i_{r}+1} \sqrt{n} l( Q_{r,k_{r}})   \Big)^{n\ell-\alpha}}  \mathrm{d} y_{1} \cdots\mathrm{d} y_{\ell}.
\end{align*}
Applying \eqref{equ:weak-CZD-Em-kernel-1} again, we can see that $|E_{s}^{(\ell)}|$ is dominated by
\begin{align*}
&\: \frac{A}{\lambda} (\lambda\gamma)^{\frac{nm}{mn-\alpha}} \sum_{k_{1},\dots,k_{\ell}} \sum\limits_{i_{1},\dots,i_{\ell}=1}^{\infty} \omega(2^{-i_{1}})   \dint_{\prod_{r=1}^{\ell} Q_{r,k_{r}}}    \dfrac{ \big|\bigcap_{r=1}^{\ell}  \mathcal {Q}_{r,k_{r}}^{i_{r}} \big| }{\Big (\sum\limits_{r=1}^{\ell} 2^{i_{r}+1} \sqrt{n} l( Q_{r,k_{r}})   \Big)^{n\ell-\alpha}}  \mathrm{d} y_{1} \cdots\mathrm{d} y_{\ell}   \\
&=\frac{A}{\lambda} (\lambda\gamma)^{\frac{nm}{mn-\alpha}} \sum_{k_{1},\dots,k_{\ell}} \sum\limits_{i_{1},\dots,i_{\ell}=1}^{\infty} \omega(2^{-i_{1}})   \dint_{\prod_{r=1}^{\ell} Q_{r,k_{r}}}   \bigg(\dint_{\bigcap_{r=1}^{\ell}  \mathcal {Q}_{r,k_{r}}^{i_{r}}}  \dfrac{ 1 }{\Big (\sum\limits_{r=1}^{\ell} |x-y_{r}|  \Big)^{n\ell-\alpha}}    \mathrm{d}x \bigg) \mathrm{d} y_{1} \cdots\mathrm{d} y_{\ell}            \\
&=\frac{A}{\lambda} (\lambda\gamma)^{\frac{nm}{mn-\alpha}} \sum\limits_{i_{1},\dots,i_{\ell}=1}^{\infty} \omega(2^{-i_{1}})   \sum_{k_{1}} \dint_{  Q_{1,k_{1}}}   \bigg(\dint_{\bigcap_{r=1}^{\ell}  \mathcal {Q}_{r,k_{r}}^{i_{r}}}  \bigg( \sum_{k_{2},\dots,k_{\ell}}\dint_{\prod_{r=2}^{\ell} Q_{r,k_{r}}}  \dfrac{ \mathrm{d} y_{2} \cdots\mathrm{d} y_{\ell}  }{\big (\sum\limits_{r=1}^{\ell} |x-y_{r}|  \big)^{n\ell-\alpha}}  \bigg)  \mathrm{d}x \bigg) \mathrm{d} y_{1}.
\end{align*}

Since for any fixed $r$, the family $\{ Q_{r,k_{r}}  \}_{k_{r}=1}^{\infty}$   is a sequence of pairwise disjoint cubes, then for any $x \in\bigcap_{r=1}^{\ell}  \mathcal {Q}_{r,k_{r}}^{i_{r}}$ and $ y_{1} \in Q_{1,k_{1}}$, using estimate \eqref{inequ:(m-1)-CZK-1}, there has
\begin{align*}
&\; \sum_{k_{2},\dots,k_{\ell}}\dint_{\prod_{r=2}^{\ell} Q_{r,k_{r}}}  \dfrac{ 1  }{\big (\sum\limits_{r=1}^{\ell} |x-y_{r}|  \big)^{n\ell-\alpha}} \mathrm{d} y_{2} \cdots\mathrm{d} y_{\ell}       \\
&\le \sum_{k_{2},\dots,k_{\ell-1}}\dint_{\prod_{r=2}^{\ell-1} Q_{r,k_{r}}}   \Big(  \dint_{ \mathbb{R}^{n}}  \dfrac{ 1  }{\big (\sum\limits_{r=1}^{\ell} |x-y_{r}|  \big)^{n\ell-\alpha}}\mathrm{d} y_{\ell} \Big) \mathrm{d} y_{2} \cdots\mathrm{d} y_{\ell-1}   \\
&\le C\sum_{k_{2},\dots,k_{\ell-1}}\dint_{\prod_{r=2}^{\ell-1} Q_{r,k_{r}}}      \dfrac{ 1  }{\big (\sum\limits_{r=1}^{\ell-1} |x-y_{r}|  \big)^{n(\ell-1)-\alpha}}  \mathrm{d} y_{2} \cdots\mathrm{d} y_{\ell-1}   \\
&\le \cdots \le  C\sum_{k_{2}}\dint_{  Q_{2,k_{2}}}      \dfrac{ 1  }{\big (  |x-y_{1}|+ |x-y_{2}| \big)^{2n-\alpha}}  \mathrm{d} y_{2}    \\
&\le    C \dint_{\mathbb{R}^{n}}      \dfrac{ 1  }{\big (  |x-y_{1}|+ |x-y_{2}| \big)^{2n-\alpha}}  \mathrm{d} y_{2}    \\
&\le         \dfrac{ C}{  |x-y_{1}| ^{n-\alpha}}.
\end{align*}
Therefore,
\begin{align}\label{equ:weak-CZD-Es-estimate}
|E_{s}^{(\ell)}|  
 &\le \frac{CA}{\lambda} (\lambda\gamma)^{\frac{nm}{mn-\alpha}} \sum\limits_{i_{1},\dots,i_{\ell}=1}^{\infty} \omega(2^{-i_{1}}) \\
 &\; \hspace{2em}\times  \sum_{k_{1}} \dint_{  Q_{1,k_{1}}}   \bigg(\dint_{\bigcap_{r=1}^{\ell}  \mathcal {Q}_{r,k_{r}}^{i_{r}}}  \bigg( \sum_{k_{2},\dots,k_{\ell}}\dint_{\prod_{r=2}^{\ell} Q_{r,k_{r}}}  \dfrac{ \mathrm{d} y_{2} \cdots\mathrm{d} y_{\ell}  }{\big (\sum\limits_{r=1}^{\ell} |x-y_{r}|  \big)^{n\ell-\alpha}}  \bigg)  \mathrm{d}x \bigg) \mathrm{d} y_{1}    \notag\\
 &\le \frac{CA}{\lambda} (\lambda\gamma)^{\frac{nm}{mn-\alpha}} \sum\limits_{i_{1},\dots,i_{\ell}=1}^{\infty} \omega(2^{-i_{1}})   \sum_{k_{1}} \dint_{  Q_{1,k_{1}}}   \bigg(\dint_{\bigcap_{r=1}^{\ell}  \mathcal {Q}_{r,k_{r}}^{i_{r}}}   \dfrac{ 1}{  |x-y_{1}| ^{n-\alpha}}     \mathrm{d}x \bigg) \mathrm{d} y_{1}   \notag \\
 &\le \frac{CA}{\lambda} (\lambda\gamma)^{\frac{nm}{mn-\alpha}} \sum_{i_{1}=1}^{\infty} \omega(2^{-i_{1}})   \sum_{k_{1}} \dint_{  Q_{1,k_{1}}}   \bigg(\sum_{i_{2},\dots,i_{\ell}=1}^{\infty} \dint_{\bigcap_{r=1}^{\ell}  \mathcal {Q}_{r,k_{r}}^{i_{r}}}   \dfrac{ 1}{  |x-y_{1}| ^{n-\alpha}}     \mathrm{d}x \bigg) \mathrm{d} y_{1}.             \notag
\end{align}

On the other hand, noting that for any fixed $r$  the sequence $\{\mathcal {Q}_{r,k_{r}}^{i_{r}}\}_{i_{r}=1}^{\infty}$ is also pairwise disjoint, then for any $y_{1} \in Q_{1,k_{1}}$, there has
\begin{align*}
\sum_{i_{2},\dots,i_{\ell}=1}^{\infty} \dint_{\bigcap_{r=1}^{\ell}  \mathcal {Q}_{r,k_{r}}^{i_{r}}}   \dfrac{ 1}{  |x-y_{1}| ^{n-\alpha}}     \mathrm{d}x 
 &=  \sum_{i_{2},\dots,i_{\ell-1}=1}^{\infty} \Big( \dint_{\big(\bigcap_{r=1}^{\ell-1}  \mathcal {Q}_{r,k_{r}}^{i_{r}}\big) \bigcap \big(\bigcup_{i_{\ell}=1}^{\infty}  \mathcal {Q}_{\ell,k_{\ell}}^{i_{\ell}}\big) }   \dfrac{ 1}{  |x-y_{1}| ^{n-\alpha}}     \mathrm{d}x \Big)  \\
  &\le  \sum_{i_{2},\dots,i_{\ell-1}=1}^{\infty} \Big( \dint_{ \bigcap_{r=1}^{\ell-1}  \mathcal {Q}_{r,k_{r}}^{i_{r}}  }   \dfrac{ 1}{  |x-y_{1}| ^{n-\alpha}}     \mathrm{d}x \Big)  \\
 &\le \cdots \le  \sum_{i_{2}=1}^{\infty} \Big( \dint_{  \mathcal {Q}_{1,k_{1}}^{i_{1}} \bigcap \mathcal {Q}_{2,k_{2}}^{i_{2}} }   \dfrac{ 1}{  |x-y_{1}| ^{n-\alpha}}     \mathrm{d}x \Big)  \\
 &\le    \dint_{  \mathcal {Q}_{1,k_{1}}^{i_{1}}  }   \dfrac{ 1}{  |x-y_{1}| ^{n-\alpha}}     \mathrm{d}x.
\end{align*}
Substituting the above estimate into \eqref{equ:weak-CZD-Es-estimate} and applying property \labelcref{enumerate:CZ-decom-4}, we have
\begin{align*}
|E_{s}^{(\ell)}| &\le  \frac{CA}{\lambda} (\lambda\gamma)^{\frac{nm}{mn-\alpha}} \sum_{i_{1}=1}^{\infty} \omega(2^{-i_{1}})   \sum_{k_{1}} \dint_{  Q_{1,k_{1}}}   \bigg(\sum_{i_{2},\dots,i_{\ell}=1}^{\infty} \dint_{\bigcap_{r=1}^{\ell}  \mathcal {Q}_{r,k_{r}}^{i_{r}}}   \dfrac{ 1}{  |x-y_{1}| ^{n-\alpha}}     \mathrm{d}x \bigg) \mathrm{d} y_{1}    \\
&\le  \frac{CA}{\lambda} (\lambda\gamma)^{\frac{nm}{mn-\alpha}} \sum_{i_{1}=1}^{\infty} \omega(2^{-i_{1}})   \sum_{k_{1}} \dint_{  Q_{1,k_{1}}}   \bigg(\dint_{ 2^{i_{1}+2}\sqrt{n}  Q_{1,k_{1}}   }   \dfrac{ 1}{  | 2^{i_{1}}\sqrt{n}  Q_{1,k_{1}} | ^{1-\alpha/n}}     \mathrm{d}x \bigg) \mathrm{d} y_{1}    \\
&\le    \frac{CA}{\lambda} (\lambda\gamma)^{\frac{nm}{mn-\alpha}}  \sum_{i=1}^{\infty} \omega(2^{-i_{1}}) 2^{-i_{1}\alpha}   \sum_{k_{1}} |Q_{1,k_{1}}|^{1+\alpha/n}      \\
&\le    \frac{CA}{\lambda} (\lambda\gamma)^{\frac{nm}{mn-\alpha}} (\lambda\gamma)^{\frac{-n(1+\alpha/n)}{mn-\alpha}} \sum_{i=1}^{\infty} \omega(2^{-i_{1}}) 2^{-i_{1}\alpha}      \\
&\le  C A \Big(\frac{\gamma}{\lambda}\Big)^{\frac{n}{mn-\alpha}}\gamma^{\frac{(m-2)n-\alpha}{mn-\alpha}}.
\end{align*}
This shows that \eqref{equ:weak-CZD-Es-m} holds.

Now, we have proved that each $|E_{s} |$ satisfies $|E_{s} | \le  C A \big(\frac{\gamma}{\lambda}\big)^{\frac{n}{mn-\alpha}}\gamma^{\frac{(m-2)n-\alpha}{mn-\alpha}}$. So, by \labelcref{equ:weak-CZD-Es,equ:weak-CZD-Es-m},
set $\gamma = ( A + B ) ^{-1}$, we have
\begin{align*}
&\; |\{x\in \mathbb{R}^{n}: |T_{\alpha}(\vec{f})(x)|> \lambda\}|  
  \le \sum_{s=2}^{2^{m}} |E_{s} | + C  B^{q}  \lambda^{-\frac{n}{mn-\alpha}}  \gamma^{q-\frac{n}{mn-\alpha}} +C (\lambda\gamma)^{\frac{-n}{mn-\alpha}}  \\
 &\le  C A \Big(\frac{\gamma}{\lambda}\Big)^{\frac{n}{mn-\alpha}}\gamma^{\frac{(m-2)n-\alpha}{mn-\alpha}} + C  B^{q}  \lambda^{-\frac{n}{mn-\alpha}}  \gamma^{q-\frac{n}{mn-\alpha}} +C (\lambda\gamma)^{\frac{-n}{mn-\alpha}}            \\
 &\le  C\Big(\frac{1}{\lambda}\Big)^{\frac{n}{mn-\alpha}} \big( A \gamma^{\frac{(m-1)n-\alpha}{mn-\alpha}} +   B^{q}  \gamma^{q-\frac{n}{mn-\alpha}} +  \gamma^{\frac{-n}{mn-\alpha}} \big)          \\
 &=  C\Big(\frac{A+B}{\lambda}\Big)^{\frac{n}{mn-\alpha}}.
\end{align*}

 The proof of  \cref{thm:dini-multi-fract-endpoint} is finished.
\end{proof}

\section{Proof of \cref{thm:dini-multi-fract-czo-weight}}

In order  to prove \cref{thm:dini-multi-fract-czo-weight}, we first establish the following pointwise estimates on the $\delta$-sharp maximal  function
 acting on  $T_{_{\alpha}}(\vec{f})$ controlled in terms of the multilinear maximal function   $\mathcal{M}_{\alpha}$

\begin{theorem} \label{thm.dini-CZ-sharp-max-fract}
Let $m\ge 2$, $0<\alpha<mn$, $T_{\alpha}$ be an $m$-linear  $\omega_{\alpha}$-CZO with   $\omega \in \dini(1)$.
Assume that  $0<\delta<    1$ and $0<\delta< n/(mn-\alpha)$. Then for all $\vec{f}$ in any product space $ L^{p_1}(\mathbb{R}^{n}) \times \cdots\times L^{p_m}(\mathbb{R}^{n}) $ with $1\le p_{j}< \infty~(j=1,2,\dots,m)$, there exists a positive constant $C$ such that
\begin{align*}
  M_{\delta}^{\sharp}(T_{_{\alpha}} (\vec{f})) (x)   &\le C   \mathcal{M}_{\alpha}(\vec{f})(x).
\end{align*}
\end{theorem}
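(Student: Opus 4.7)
My plan is to follow the classical sharp-maximal framework adapted to the multilinear setting. Fix a cube $Q\ni x$ with center $x_{0}$ and set $Q^{*}=8\sqrt{n}\,Q$. Split each $f_{j}=f_{j}^{0}+f_{j}^{\infty}$ with $f_{j}^{0}=f_{j}\chi_{Q^{*}}$, and expand
\[
T_{\alpha}(\vec{f})(z)=T_{\alpha}(\vec{f}^{\,0})(z)+\sum_{\vec{\alpha}\neq \vec{0}}T_{\alpha}(f_{1}^{\alpha_{1}},\dots,f_{m}^{\alpha_{m}})(z).
\]
I would take the centering constant
\[
c_{0}=\sum_{\vec{\alpha}\neq \vec{0}}T_{\alpha}(f_{1}^{\alpha_{1}},\dots,f_{m}^{\alpha_{m}})(x_{0}),
\]
which is well defined since $x_{0}\notin \supp f_{j}^{\infty}$ and the size estimate \eqref{equ:w-CZK-frac-size-estimate} makes every integrand absolutely integrable. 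Using the elementary inequality $||a|^{\delta}-|b|^{\delta}|\le |a-b|^{\delta}$ valid for $0<\delta\le 1$, together with the definition $M_{\delta}^{\sharp}(\cdot)=(M^{\sharp}(|\cdot|^{\delta}))^{1/\delta}$, the theorem reduces to proving
\[
\Big(\frac{1}{|Q|}\int_{Q}|T_{\alpha}(\vec{f})(z)-c_{0}|^{\delta}dz\Big)^{1/\delta}\le C\,\mathcal{M}_{\alpha}(\vec{f})(x)
\]
with $C$ independent of $Q$.

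For the local contribution $T_{\alpha}(\vec{f}^{\,0})$, I would apply Kolmogorov's inequality (\cref{lem:kolmogorov}) with exponents $\delta$ and $q_{0}=n/(mn-\alpha)$, which is legitimate because $\delta<n/(mn-\alpha)$. Combined with the end-point weak-type bound of \cref{thm:dini-multi-fract-endpoint},
\[
\Big(\frac{1}{|Q|}\int_{Q}|T_{\alpha}(\vec{f}^{\,0})(z)|^{\delta}dz\Big)^{1/\delta}\le C|Q|^{-(mn-\alpha)/n}\prod_{j=1}^{m}\int_{Q^{*}}|f_{j}|,
\]
and a short algebraic manipulation rewrites the right-hand side as $C|Q^{*}|^{\alpha/n}\prod_{j}\frac{1}{|Q^{*}|}\int_{Q^{*}}|f_{j}|\le C\,\mathcal{M}_{\alpha}(\vec{f})(x)$.

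For each mixed tuple $\vec{\alpha}\neq \vec{0}$ (let $J=\{j:\alpha_{j}=\infty\}\neq\emptyset$) I need a pointwise bound on $|T_{\alpha}(\vec{f}^{\vec{\alpha}})(z)-T_{\alpha}(\vec{f}^{\vec{\alpha}})(x_{0})|$. Since at least one coordinate $y_{l}$ with $l\in J$ lies outside $Q^{*}$, we have $|z-y_{l}|\ge 2|z-x_{0}|$, so the regularity estimate \eqref{equ:w-CZK-frac-regularity-1} applies. The crucial step is to decompose the integration region with a \emph{single} annular parameter,
\[
\Omega_{k}=\Big\{\vec{y}\in\prod_{j}\supp f_{j}^{\alpha_{j}}\;:\;\max_{j\in J}|x_{0}-y_{j}|\in[2^{k-1}l(Q^{*}),2^{k}l(Q^{*}))\Big\},\qquad k\ge 1.
\]
On $\Omega_{k}$ one has $\sum_{j}|z-y_{j}|\gtrsim 2^{k}l(Q^{*})$, so the regularity estimate gives $|K_{\alpha}(z,\vec{y})-K_{\alpha}(x_{0},\vec{y})|\le CA\,\omega(2^{-k})(2^{k}l(Q^{*}))^{-(mn-\alpha)}$. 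Every $y_{j}$ lying in $\Omega_{k}$ belongs to the single cube $2^{k+1}Q^{*}$, so
\[
\int_{\Omega_{k}}|K_{\alpha}(z,\vec{y})-K_{\alpha}(x_{0},\vec{y})|\prod_{j}|f_{j}^{\alpha_{j}}|d\vec{y}\le CA\,\omega(2^{-k})|2^{k+1}Q^{*}|^{\alpha/n}\prod_{j}\frac{1}{|2^{k+1}Q^{*}|}\int_{2^{k+1}Q^{*}}|f_{j}|
\]
is dominated by $CA\,\omega(2^{-k})\mathcal{M}_{\alpha}(\vec{f})(x)$. Summing in $k\ge 1$ then yields the factor $|\omega|_{\dini(1)}$, and summing the finitely many $\vec{\alpha}\neq \vec{0}$ concludes the proof.

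The main obstacle I anticipate is precisely the choice of decomposition parameter in the previous paragraph. A naive per-coordinate decomposition $y_{j}\in 2^{k_{j}+1}Q^{*}\setminus 2^{k_{j}}Q^{*}$ would produce a summand whose $\omega$-weight depends only on $\max k_{j}$ but with combinatorial multiplicity of order $N^{|J|-1}$ at level $N$, forcing a $\log^{|J|-1}$-Dini assumption on $\omega$. Using the joint maximum $\max_{j\in J}|x_{0}-y_{j}|$ as the sole decomposition parameter eliminates these multiplicities and is what allows the argument to close under the minimal hypothesis $\omega\in \dini(1)$.
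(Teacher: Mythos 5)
Your proposal is correct and follows essentially the same route as the paper's own proof: split $f_j=f_j^0+f_j^\infty$ over $Q^*=8\sqrt{n}Q$, use Kolmogorov's inequality plus the endpoint weak-type bound of \cref{thm:dini-multi-fract-endpoint} for the all-local term, center by the values of the nonlocal pieces at a reference point, and apply the $\omega$-regularity estimate \eqref{equ:w-CZK-frac-regularity-1} on dyadic annuli to collect the factor $|\omega|_{\dini(1)}$. The only noticeable deviation is presentational but worth highlighting: the paper decomposes the nonlocal region by simply writing $\int_{(\mathbb{R}^n\setminus Q^*)^m}\le\sum_k\int_{(\mathcal{Q}_k)^m}$ with $\mathcal{Q}_k$ a fixed annulus, which as literally written is not a cover of the product domain; the intended meaning is the nested-cube decomposition $(2^{k+3}\sqrt{n}Q)^m\setminus(2^{k+2}\sqrt{n}Q)^m$. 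Your choice of the single annular parameter $\max_{j\in J}|x_0-y_j|$ makes this precise and, as your closing remark correctly observes, is exactly what allows all the $y_j$ to be placed in a single cube $2^{k+1}Q^*$ so the bound lands on $\mathcal{M}_\alpha(\vec f)(x)$ with only a $\dini(1)$ sum in $k$; a coordinate-wise dyadic decomposition would not do this cleanly. Aside from the cosmetic choice of reference point ($x_0$ vs.\ $x$, both in $Q$) and the fact that you treat the all-$\infty$ tuple together with the mixed tuples rather than separately, this is the paper's argument.
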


\begin{proof}
 For a fixed point $x$ and a cube $Q  \ni x$. Due to the fact $\big| | a|^{\gamma} - | c|^{\gamma} \big| \le | a-c|^{\gamma}$ for $0<\gamma<1$,  it suffices to prove that, for $0<\delta< \min\{1,n/(mn-\alpha)\}$, there exists a positive constant $C$ such that
\begin{equation*}
   \left(\dfrac{1}{|Q|}\dint_{Q} \Big| T_{_{\alpha}}(\vec{f})(z) -c \Big|^{\delta} \mathrm{d}z \right)^{1/\delta} \le C    \mathcal{M}_{\alpha}(\vec{f})(x),
\end{equation*}
where the constant  $c$ is to be determined later.

For each $j$, we decompose $f_{j}=f_{j}^{0} + f_{j}^{\infty}$ with $f_{j}^{0}=f_{j} \chi_{Q^{*}} $ and $Q^{*} = 8\sqrt{n} Q$. 
Then
\begin{align*}
 \prod_{j=1}^{m} f_{j}(y_{j})   &=  \prod_{j=1}^{m} \Big( f_{j}^{0}(y_{j}) + f_{j}^{\infty}(y_{j}) \Big)
   =   \sum_{\rho_{1},\dots,\rho_{m}\in \{0,\infty\}}  f_{1}^{\rho_{1}}(y_{1}) \cdots  f_{m}^{\rho_{m}} (y_{m}) \\
  &=  \prod_{j=1}^{m} f_{j}^{0}(y_{j}) +  \sum_{(\rho_{1},\dots,\rho_{m})\in \rho} f_{1}^{\rho_{1}}(y_{1}) \cdots  f_{m}^{\rho_{m}} (y_{m}),
\end{align*}
where $\rho=\{ (\rho_{1},\dots,\rho_{m}): \hbox{there is at least one}~ \rho_{j}\neq 0\}$.  It is easy to see that
\begin{align*}
 T_{_{\alpha}}(\vec{f})(z)  &= T_{\alpha}(f_{1}^{0},\dots,  f_{m}^{0})(z) + \sum_{(\rho_{1},\dots,\rho_{m})\in \rho} T_{\alpha}(f_{1}^{\rho_{1}},\dots,  f_{m}^{\rho_{m}})(z) .
\end{align*}
Furthermore, we have
\begin{align*}
   \left(\dfrac{1}{|Q|}\dint_{Q} \Big| T_{_{\alpha}}(\vec{f})(z) -c \Big|^{\delta} \mathrm{d}z \right)^{1/\delta} &\le  \left(\dfrac{1}{|Q|}\dint_{Q} \Big| T_{\alpha}(f_{1}^{0},\dots,  f_{m}^{0})(z) \Big|^{\delta} \mathrm{d}z \right)^{1/\delta}      \\
&\; \hspace{1em}  +  \left(\dfrac{1}{|Q|}\dint_{Q} \Big| \sum_{(\rho_{1},\dots,\rho_{m})\in \rho} T_{\alpha}(f_{1}^{\rho_{1}},\dots,  f_{m}^{\rho_{m}})(z) -c \Big|^{\delta} \mathrm{d}z \right)^{1/\delta}    \\
& :=  I_{1}+I_{2}.
\end{align*}

Let us first estimate $I_{1}$. Since $T_{\alpha}$ is an $m$-linear  $\omega_{\alpha}$-CZO with   $\omega \in \dini(1)$, then it follows from  \cref{thm:dini-multi-fract-endpoint} that $T_{\alpha}$ maps  $L^{1}(\mathbb{R}^{n}) \times  \cdots\times L^{1}(\mathbb{R}^{n}) $ into  $L^{\frac{n}{mn-\alpha},\infty}(\mathbb{R}^{n})$.  Applying Kolmogorov's inequality (see \cref{lem:kolmogorov}) with $p=\delta$ and $q=\frac{n}{mn-\alpha}$, we have
\begin{align*}
 I_{1}  &=  \left(\dfrac{1}{|Q|}\dint_{Q} \Big| T_{\alpha}(f_{1}^{0},\dots,  f_{m}^{0})(z) \Big|^{\delta} dz \right)^{1/\delta}      \\
&\le C |Q|^{-\frac{mn-\alpha}{n}} \|T_{\alpha}(f_{1}^{0},\dots,  f_{m}^{0})\|_{L^{\frac{n}{mn-\alpha},\infty}(\mathbb{R}^{n})}   \\
&\le C  \prod_{j=1}^{m} \frac{1}{|Q^{*}|^{1-\frac{\alpha}{mn}}} \dint_{Q^{*}} |f_{j}(z)| \mathrm{d} z    \\
&\le C    \mathcal{M}_{\alpha}(\vec{f})(x).
\end{align*}

To estimate the remaining terms in $I_{2}$, we choose
\begin{align*}
c  &=  \sum\limits_{(\rho_{1},\dots,\rho_{m})\in \rho}   T_{\alpha}(f_{1}^{\rho_{1}},\dots, f_{m}^{\rho_{m}})(x),
\end{align*}
and it suffices to show that, for any $z \in Q$, the following estimates hold
\begin{align} \label{inequ:dini-frac-max}
\sum_{(\rho_{1},\dots,\rho_{m})\in \rho} \Big| T_{\alpha}(f_{1}^{\rho_{1}},\dots,  f_{m}^{\rho_{m}})(z)      -    T_{\alpha}(f_{1}^{\rho_{1}},\dots, f_{m}^{\rho_{m}})(x) \Big|  &\le C    \mathcal{M}_{\alpha}(\vec{f})(x).
\end{align}

we consider first the case when $\rho_{1}=\cdots =\rho_{m}=\infty$. For any $x, z\in Q$, there has
\begin{align*}
&  \Big|T_{\alpha}(f_{1}^{\infty},\dots, f_{m}^{\infty})(z)   -  T_{\alpha}(f_{1}^{\infty},\dots, f_{m}^{\infty})(x)    \Big|  \\
&\le    \int_{(\mathbb{R}^{n}\setminus  Q^{*})^{m}}   |K_{\alpha}(z,\vec{y})-K_{\alpha}(x,\vec{y})| \prod_{j=1}^{m}| f_{j}^{\infty}(y_{j})|  d\vec{y} \\
&\le   \sum_{k=1}^{\infty} \int_{(\mathcal {Q}_{k})^{m}}   |K_{\alpha}(z,\vec{y})-K_{\alpha}(x,\vec{y})| \prod_{j=1}^{m}| f_{j}^{\infty}(y_{j})|  d\vec{y},
\end{align*}
where $\mathcal {Q}_{k}= ( 2^{k+3}\sqrt{n}Q) \setminus (2^{k+2}\sqrt{n}Q) $ for $k=1,2,\dots$. Note that, for $x,z\in Q$ and any $(y_{1},\dots,y_{m}) \in (\mathcal {Q}_{k})^{m}$, there has
\begin{align*}
2^{k}\sqrt{n}l(Q)\le |z-y_{j}| \ & ~\hbox{and}\ |z-x|\le \sqrt{n} l(Q),
\end{align*}
and recalling that $\omega$  is   nondecreasing, and applying \eqref{equ:w-CZK-frac-regularity-1}, we have
\begin{align} \label{equ:w-CZK-frac-regularity-1-1}
|K_{\alpha}(z,\vec{y})-K_{\alpha}(x,\vec{y})| &\le \dfrac{A }{\Big (\sum\limits_{j=1}^{m}|z-y_{j}| \Big)^{mn-\alpha}} \omega \bigg( \frac{|z-x|}{ \sum\limits_{j=1}^{m}|z-y_{j}|} \bigg)
\le \dfrac{C \omega (2^{-k})}{|2^{k}\sqrt{n} Q |^{m-\alpha/n}}.
\end{align}
Then
\begin{align*}
&\;  \Big|T_{\alpha}(f_{1}^{\infty},\dots, f_{m}^{\infty})(z)   -  T_{\alpha}(f_{1}^{\infty},\dots, f_{m}^{\infty})(x)    \Big|   \\
 &\le   \sum_{k=1}^{\infty} \int_{(\mathcal {Q}_{k})^{m}}   |K_{\alpha}(z,\vec{y})-K_{\alpha}(x,\vec{y})| \prod_{j=1}^{m}| f_{j}^{\infty}(y_{j})|  d\vec{y} \\
&\le   C \sum_{k=1}^{\infty}\omega (2^{-k}) \int_{(\mathcal {Q}_{k})^{m}}   \dfrac{1}{|2^{k}\sqrt{n} Q |^{m-\alpha/n}} \prod_{j=1}^{m}| f_{j}^{\infty}(y_{j})|  d\vec{y}      \\
&\le   C \sum_{k=1}^{\infty}\omega (2^{-k})  |2^{k+3}\sqrt{n}Q|^{\alpha/n}  \prod_{j=1}^{m}  \dfrac{1}{|2^{k+3}\sqrt{n}Q|}\int_{2^{k+3}\sqrt{n}Q}| f_{j}(y_{j})|  dy_{j}   \\
 &\le C |\omega|_{\dini(1)}   \mathcal{M}_{\alpha}(\vec{f})(x).
\end{align*}

 Now, for $(\rho_{1},\dots,\rho_{m})\in \rho$, let us consider the terms  \eqref{inequ:dini-frac-max}  such that at least one
 $\rho_{j}=0$ and  one  $\rho_{i}=\infty$. Without loss of generality, we assume that  $ \rho_{1}=\cdots=\rho_{\ell}=0$ and $\rho_{\ell+1}=\cdots=\rho_{m}=\infty$ with $1\le \ell<m$. Thus
\begin{align} \label{inequ:dini-frac-max-1}
&  \Big|T_{\alpha}(f_{1}^{\rho_{1}},\dots, f_{m}^{\rho_{m}})(z)   -     T_{\alpha}(f_{1}^{\rho_{1}},\dots, f_{m}^{\rho_{m}})(x)   \Big|  \le  \int_{(\mathbb{R}^{n})^{m}}   |K_{\alpha}(z,\vec{y})-K_{\alpha}(x,\vec{y})| \prod_{j=1}^{m}| f_{j}^{\rho_{j} }(y_{j})|  d\vec{y}    \notag \\
&\le   \int_{( Q^{*})^{\ell}}\prod_{j=1}^{\ell}| f_{j}^{0}(y_{j})| \int_{(\mathbb{R}^{n}\setminus  Q^{*})^{m-\ell}}   |K_{\alpha}(z,\vec{y})-K_{\alpha}(x,\vec{y})| \prod_{j=\ell+1}^{m}| f_{j}^{\infty}(y_{j})|  d\vec{y}    \\
&\le   \int_{(Q^{*})^{\ell}}\prod_{j=1}^{\ell}| f_{j}^{0}(y_{j})| \sum_{k=1}^{\infty}\int_{(\mathcal {Q}_{k})^{m-\ell}}   |K_{\alpha}(z,\vec{y})-K_{\alpha}(x,\vec{y})| \prod_{j=\ell+1}^{m}| f_{j}^{\infty}(y_{j})|  d\vec{y}.         \notag
\end{align}
Since for  $x, z\in Q$, and any $ y_{j}  \in  \mathcal {Q}_{k} $ with $\ell+1\le j \le m$,   there has $2^{k}\sqrt{n} l(Q)\le |z-y_{j}|$,
then, similar to  \eqref{equ:w-CZK-frac-regularity-1-1}, we obtain that
\begin{align*}
|K_{\alpha}(z,\vec{y})-K_{\alpha}(x,\vec{y})| &\le \dfrac{A }{\Big (\sum\limits_{j=1}^{m}|z-y_{j}| \Big)^{mn-\alpha}} \omega \bigg( \frac{|z-x|}{ \sum\limits_{j=1}^{m}|z-y_{j}|} \bigg)            \\
&\le \dfrac{A }{\Big (\sum\limits_{j=\ell+1}^{m}|z-y_{j}| \Big)^{mn-\alpha}} \omega \bigg( \frac{|z-x|}{ \sum\limits_{j=\ell+1}^{m}|z-y_{j}|} \bigg)            \\
&\le \dfrac{C \omega (2^{-k})}{|2^{k}\sqrt{n}  Q |^{m-\alpha/n}}.
\end{align*}
This together with \eqref{inequ:dini-frac-max-1} gives
\begin{align*}
&  \Big|T_{\alpha}(f_{1}^{\rho_{1}},\dots, f_{m}^{\rho_{m}})(z)   -     T_{\alpha}(f_{1}^{\rho_{1}},\dots, f_{m}^{\rho_{m}})(x)   \Big|  \\
& \le C \int_{( Q^{*})^{\ell}}\prod_{j=1}^{\ell}| f_{j}^{0}(y_{j})| \sum_{k=1}^{\infty}\int_{(\mathcal {Q}_{k})^{m-\ell}}   |K_{\alpha}(z,\vec{y})-K_{\alpha}(x,\vec{y})| \prod_{j=\ell+1}^{m}| f_{j}^{\infty}(y_{j})|  \mathrm{d}\vec{y}  \\
& \le C \int_{( Q^{*})^{\ell}}\prod_{j=1}^{\ell}| f_{j}^{0}(y_{j})| \sum_{k=1}^{\infty} \omega (2^{-k})\int_{(\mathcal {Q}_{k})^{m-\ell}}   \dfrac{1}{|2^{k}\sqrt{n} Q |^{m-\alpha/n}} \prod_{j=\ell+1}^{m}| f_{j}^{\infty}(y_{j})|  \mathrm{d}\vec{y}  \\
& \le C \sum_{k=1}^{\infty} \omega (2^{-k})\dfrac{1}{|2^{k}\sqrt{n} Q |^{m-\alpha/n}} \Big(\prod_{j=1}^{\ell}\int_{ Q^{*}}| f_{j}^{0}(y_{j})| \mathrm{d}y_{j} \Big) \Big( \prod_{j=\ell+1}^{m}\int_{2^{k+3}\sqrt{n} Q}| f_{j}^{\infty}(y_{j})|  \mathrm{d}y_{j} \Big)   \\
&\le  C \sum_{k=1}^{\infty} \omega (2^{-k}) |2^{k+3}\sqrt{n} Q |^{\alpha/n} \Big(\prod_{j=1}^{m} \dfrac{1}{|2^{k+3}\sqrt{n} Q |}  \int_{2^{k+3}\sqrt{n} Q}| f_{j}(y_{j})|  \mathrm{d}y_{j} \Big)   \\
&\le   C |\omega|_{\dini(1)}   \mathcal{M}_{\alpha}(\vec{f})(x).
\end{align*}
So, \eqref{inequ:dini-frac-max} is proven.
Therefore, we have
\begin{align*}
 I_{2}  &=\left(\dfrac{1}{|Q|}\dint_{Q} \Big| \sum_{(\rho_{1},\dots,\rho_{m})\in \rho} T_{\alpha}(f_{1}^{\rho_{1}},\dots,  f_{m}^{\rho_{m}})(z) -c \Big|^{\delta} dz \right)^{1/\delta}    \\
&\le C  \mathcal{M}_{\alpha}(\vec{f})(x).
\end{align*}

Combining the above estimates we get the desired result. The proof is completed.
\end{proof}

In 2014, Grafakos et al. \cite{grafakos2014multilineara}  proved the following result in the context of RD-spaces, which serves as an analog of the classical Fefferman-Stein inequalities (see Lemma 4.11 in \cite{grafakos2014multilineara}). Here, we rewrite their result as follows.
\begin{lemma} \label{lem:one-weight-sharp-max}
Let $0< q_{0}\le q <\infty$ and $w\in A_{\infty}$. Then  there exists a  positive constant $C$  (depending on $n$, $q$ and the $A_{\infty}$ constant of $w$), such that for all functions $f\in L_{\loc}^{1}(\mathbb{R}^{n})$ with $M(f) \in L^{q_{0},\infty}(w)$,
\begin{enumerate}[leftmargin=2em,label=(\roman*),itemindent=1.5em]  
\item when $q_{0}< q$,  we have
\begin{align*}
  \|M (f)\|_{L^{q}(w)} &\le  C \| M^{\sharp}(f)\|_{L^{q}(w)}
\end{align*}
\item   when  $q_{0}\le q$,    we have
\begin{align*}
  \|M (f)\|_{L^{q,\infty}(w)} &\le  C \| M^{\sharp}(f)\|_{L^{q,\infty}(w)}.
\end{align*}
\end{enumerate}
\end{lemma}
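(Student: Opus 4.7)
The overall strategy is the classical Fefferman--Stein good-$\lambda$ approach, adapted to $A_{\infty}$ weights. The heart of the matter is the inequality
\[
w\bigl(\{x\in\mathbb{R}^{n}: M(f)(x)>2\lambda,\; M^{\sharp}(f)(x)\le \gamma\lambda\}\bigr) \le C\gamma^{\epsilon}\, w\bigl(\{x\in\mathbb{R}^{n}: M(f)(x)>\lambda\}\bigr),
\]
valid for all $\lambda>0$ and all sufficiently small $\gamma>0$, where $\epsilon>0$ depends only on the $A_{\infty}$ constant of $w$. The role of the hypothesis $M(f)\in L^{q_{0},\infty}(w)$ is purely qualitative: it lets us truncate at height $N$ and then safely absorb terms on the left when integrating.

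First I would prove the good-$\lambda$ inequality by a Whitney decomposition. Since $\{M(f)>\lambda\}$ is open and (by the hypothesis) has finite Lebesgue measure on each bounded height truncation, write it as a disjoint union of Whitney cubes $\{Q_{j}\}$. On each $Q_{j}$ pick a point $\tilde x_{j}$ at which $M(f)(\tilde x_{j})\le\lambda$ (by maximality of $Q_{j}$ and boundedness of the neighbouring cubes). A standard cube-by-cube argument using the usual Lebesgue estimate $|\{x\in Q_{j}: M(f)(x)>2\lambda,\; M^{\sharp}(f)(x)\le\gamma\lambda\}|\le C\gamma |Q_{j}|$ then gives the Lebesgue analogue with $\gamma$ in place of $\gamma^{\epsilon}$; finally invoke the $A_{\infty}$ property $w(E)/w(Q)\le C(|E|/|Q|)^{\epsilon}$ to upgrade from Lebesgue to $w$-measure, and sum over $j$.

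For part (i) with $q_{0}<q$, introduce the truncated integral
\[
I_{N} \;=\; q\int_{0}^{N}\lambda^{q-1}\,w\bigl(\{M(f)>\lambda\}\bigr)\,\mathrm{d}\lambda,
\]
which is finite because $M(f)\in L^{q_{0},\infty}(w)$ bounds $w(\{M(f)>\lambda\})\le \lambda^{-q_{0}}\|M(f)\|_{L^{q_{0},\infty}(w)}^{q_{0}}$ and $q>q_{0}$. Split $w(\{M(f)>2\lambda\}) \le w(\{M(f)>2\lambda,\ M^{\sharp}(f)\le\gamma\lambda\}) + w(\{M^{\sharp}(f)>\gamma\lambda\})$, multiply by $q\lambda^{q-1}$, integrate from $0$ to $N/2$, and apply the good-$\lambda$ inequality to obtain $2^{-q}I_{N}\le C\gamma^{\epsilon}I_{N}+C\gamma^{-q}\|M^{\sharp}(f)\|_{L^{q}(w)}^{q}$. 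Choose $\gamma$ small so that $2^{q}C\gamma^{\epsilon}<1/2$, absorb, and let $N\to\infty$; the monotone convergence theorem delivers $\|M(f)\|_{L^{q}(w)}\le C\|M^{\sharp}(f)\|_{L^{q}(w)}$.

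For part (ii), the argument is analogous but now we work with the weak quasi-norms. Fix $t>0$ and estimate $t\,w(\{M(f)>2t\})^{1/q}$ by combining the good-$\lambda$ inequality at level $\lambda=t$ with the elementary splitting $w(\{M(f)>2t\})\le C\gamma^{\epsilon}w(\{M(f)>t\})+w(\{M^{\sharp}(f)>\gamma t\})$; taking the supremum over $t$ and absorbing the first term (using finiteness of $\|M(f)\|_{L^{q_{0},\infty}(w)}$ to justify the subtraction after a truncation) yields $\|M(f)\|_{L^{q,\infty}(w)}\le C\|M^{\sharp}(f)\|_{L^{q,\infty}(w)}$. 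The main obstacle is the absorption step in this endpoint case: when $q_{0}=q$ one cannot simply argue as in (i), because one must truncate the weak-type norm at a height $N$, prove the resulting finite quantity satisfies the inequality, and then pass to the limit; this requires a careful layer-cake estimate combined with the $A_{\infty}$ self-improvement to ensure all constants are independent of $N$.
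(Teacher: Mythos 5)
The paper does not prove this lemma: it is quoted from the cited work of Grafakos, Liu, Maldonado and Yang (their Lemma~4.11), which establishes it in the more general setting of RD-spaces. Your sketch supplies the classical Fefferman--Stein good-$\lambda$ argument on $\mathbb{R}^{n}$, and it is sound in outline and in substance the Euclidean case of that reference's proof. Two points to tighten. First, the point $\tilde{x}_{j}$ with $M(f)(\tilde{x}_{j})\le\lambda$ cannot lie in $Q_{j}$ itself, since $Q_{j}\subset\{M(f)>\lambda\}$; by the Whitney property it lies in a fixed dilate $cQ_{j}$ for a dimensional constant $c>1$, and the local estimate $|\{x\in Q_{j}: M(f)(x)>2\lambda,\ M^{\sharp}(f)(x)\le\gamma\lambda\}|\le C\gamma|Q_{j}|$ is then obtained by splitting $f$ around its average on $cQ_{j}$ and applying the weak $(1,1)$ bound for $M$ to the oscillation piece, whose size is controlled by $M^{\sharp}(f)$ at any point of $Q_{j}$ belonging to the exceptional set. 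Second, ``finite Lebesgue measure'' is not quite the role of the hypothesis $M(f)\in L^{q_{0},\infty}(w)$: it guarantees that $\{M(f)>\lambda\}$ is a \emph{proper} open subset of $\mathbb{R}^{n}$ (a nontrivial $A_{\infty}$ weight has $w(\mathbb{R}^{n})=\infty$), so the Whitney decomposition exists, and, as you correctly use, it makes the truncated layer-cake integral in (i) and the truncated weak quasi-norm $\sup_{0<t\le N}t^{q}w(\{M(f)>t\})$ in (ii) finite so that the absorption after choosing $\gamma$ small is legitimate. Your identification of the endpoint $q_{0}=q$ in part (ii) as the spot requiring a truncation in $t$ rather than in the layer-cake integral is exactly the right caution.
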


Now, by \cref{thm.dini-CZ-sharp-max-fract,lem:one-weight-sharp-max,thm:dini-multi-fract-endpoint}, we can get the following result. Since the argument is almost the same as the proof of Proposition 4.13 in \cite{grafakos2014multilineara} (see also Theorem 6.2 in \cite{lu2014multilinear}), we omit the proof.
\begin{theorem} \label{thm:weight-multi-czo-max-fract}
Let $m\ge 2$, $0<\alpha<mn$, $T_{\alpha}$ be an $m$-linear  $\omega_{\alpha}$-CZO with   $\omega \in \dini(1)$,  $n/(mn-\alpha)\le q <\infty$ and $w\in A_{\infty}$. Then  for all bounded functions $\vec{f}$ with compact support, there exists a constant $C>0 $  (depending on $n$, $q$ and the $A_{\infty}$ constant of $w$),
\begin{enumerate}[leftmargin=2em,label=(\roman*),itemindent=1.5em]  
\item when  $n/(mn-\alpha)< q$,   we have
\begin{align*}
  \|T_{\alpha}(\vec{f})\|_{L^{q}(w)} &\le  C \| \mathcal{M}_{\alpha}(\vec{f})\|_{L^{q}(w)}
\end{align*}
\vspace{-2em}
\label{enumerate:weight-multi-czo-max-fract-1}
\item   when  $n/(mn-\alpha)\le q$,   we have
\begin{align*}
  \|T_{\alpha}(\vec{f})\|_{L^{q,\infty}(w)} &\le  C \|\mathcal{M}_{\alpha}(\vec{f})\|_{L^{q,\infty}(w)}.
\end{align*}
\vspace{-2em}
\label{enumerate:weight-multi-czo-max-fract-2}
\end{enumerate}
\end{theorem}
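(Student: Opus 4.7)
The plan is to chain three ingredients already established in the excerpt: the sharp maximal bound \cref{thm.dini-CZ-sharp-max-fract}, the weighted Fefferman--Stein style inequality \cref{lem:relation-sharp-maximal-weight}, and the endpoint weak estimate \cref{thm:dini-multi-fract-endpoint} (used only to guarantee finiteness below). First I would fix $\delta$ with $0<\delta<\min\{1,n/(mn-\alpha)\}$, which is admissible in both \cref{thm.dini-CZ-sharp-max-fract} and \cref{lem:relation-sharp-maximal-weight}. By the Lebesgue differentiation theorem, $|T_{\alpha}(\vec f)(x)|\le M_{\delta}(T_{\alpha}(\vec f))(x)$ for a.e. $x$, so in either the strong or weak formulation it suffices to control $M_{\delta}(T_{\alpha}(\vec f))$ in place of $T_{\alpha}(\vec f)$.

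Next, assuming the left-hand side is finite (see the last paragraph), \cref{lem:relation-sharp-maximal-weight} applied with this $\delta$ and the given $A_{\infty}$ weight $w$ yields
\begin{align*}
\|M_{\delta}(T_{\alpha}(\vec f))\|_{L^{q}(w)} &\le C\,\|M_{\delta}^{\sharp}(T_{\alpha}(\vec f))\|_{L^{q}(w)},\\
\|M_{\delta}(T_{\alpha}(\vec f))\|_{L^{q,\infty}(w)} &\le C\,\|M_{\delta}^{\sharp}(T_{\alpha}(\vec f))\|_{L^{q,\infty}(w)}.
\end{align*}
Then \cref{thm.dini-CZ-sharp-max-fract} provides the pointwise control $M_{\delta}^{\sharp}(T_{\alpha}(\vec f))(x)\le C\,\mathcal{M}_{\alpha}(\vec f)(x)$, which, combined with the two displays above, immediately yields both \labelcref{enumerate:weight-multi-czo-max-fract-1} and \labelcref{enumerate:weight-multi-czo-max-fract-2}. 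The range condition $n/(mn-\alpha)\le q$ in \labelcref{enumerate:weight-multi-czo-max-fract-2} (resp.\ strict inequality in \labelcref{enumerate:weight-multi-czo-max-fract-1}) is exactly what allows us to choose $\delta$ below both $1$ and $n/(mn-\alpha)$, so that the weighted Fefferman--Stein estimate is applicable; this is also the point at which the structural hypothesis of \cref{lem:relation-sharp-maximal-weight} (the same $\delta$ on both sides) is used.

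The principal technical obstacle is the a priori finiteness of the left-hand sides of \cref{lem:relation-sharp-maximal-weight}, since without it the estimate is vacuous. Here I would exploit the hypothesis that $\vec f$ is bounded with compact support: by \cref{thm:dini-multi-fract-endpoint}, $T_{\alpha}(\vec f)\in L^{n/(mn-\alpha),\infty}(\mathbb{R}^{n})$; on the other hand, the kernel size estimate \eqref{equ:w-CZK-frac-size-estimate} together with compactness of $\supp\vec f$ gives $|T_{\alpha}(\vec f)(x)|\lesssim (1+|x|)^{-(mn-\alpha)}$ for $|x|$ large. A truncation of $T_{\alpha}(\vec f)$ by $\chi_{B(0,R)}$ together with the doubling/self-improvement properties of $A_{\infty}$ weights then shows $M_{\delta}(T_{\alpha}(\vec f))$ belongs to the relevant weighted (weak) $L^{q}$ space, legitimizing the use of \cref{lem:relation-sharp-maximal-weight}. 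Passing $R\to\infty$ via monotone convergence recovers the full inequality. (Alternatively, one may invoke \cref{lem:one-weight-sharp-max} directly, since its hypothesis $M(f)\in L^{q_{0},\infty}(w)$ is verified by the same reasoning with $q_{0}=n/(mn-\alpha)$.) Once finiteness is secured, no further computation is required; the proof mirrors that of Proposition 4.13 in \cite{grafakos2014multilineara} and Theorem 6.2 in \cite{lu2014multilinear}.
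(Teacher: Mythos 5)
Your proposal follows essentially the same route the paper takes: it combines the pointwise bound $M_{\delta}^{\sharp}(T_{\alpha}\vec f)\lesssim\mathcal{M}_{\alpha}(\vec f)$ from \cref{thm.dini-CZ-sharp-max-fract} with a weighted Fefferman--Stein inequality, using \cref{thm:dini-multi-fract-endpoint} to handle the a priori finiteness issue, exactly as the paper indicates (the paper omits the details, citing Prop.\ 4.13 of \cite{grafakos2014multilineara} and Thm.\ 6.2 of \cite{lu2014multilinear}). The one genuine difference is which Fefferman--Stein variant is invoked: you use \cref{lem:relation-sharp-maximal-weight}, whose hypothesis is that $\lVert M_{\delta}(T_{\alpha}\vec f)\rVert_{L^{q}(w)}$ is finite, whereas the paper points to \cref{lem:one-weight-sharp-max}, whose hypothesis is the weaker $M(|T_{\alpha}\vec f|^{\delta})\in L^{q_{0},\infty}(w)$. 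The latter is the more convenient choice precisely at the technical point you spend the last paragraph on: for a general $A_{\infty}$ weight with rapid growth, $M_{\delta}(T_{\alpha}\vec f)$ need not lie in $L^{q}(w)$ for $q$ near the endpoint $n/(mn-\alpha)$, and your truncation sketch does not quite close this, since $M_{\delta}^{\sharp}$ applied to $T_{\alpha}(\vec f)\chi_{B(0,R)}$ is not controlled by $\mathcal{M}_{\alpha}(\vec f)$, so the truncated inequality cannot be passed to the limit directly. Your parenthetical alternative (using \cref{lem:one-weight-sharp-max}) is the cleaner fix, though it should be applied to $|T_{\alpha}(\vec f)|^{\delta}$ with exponents $q_{0}$ and $q/\delta$ rather than to $T_{\alpha}(\vec f)$ itself, so the hypothesis to verify is $M(|T_{\alpha}(\vec f)|^{\delta})\in L^{q_{0},\infty}(w)$, not $M(T_{\alpha}\vec f)\in L^{n/(mn-\alpha),\infty}(w)$. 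With that adjustment the argument matches the paper's intended route.
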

The weighted norm inequalities for $\mathcal{M}_{\alpha}$  were   established in \cite{moen2009weighted} ( see also  \cite{chen2010weighted} or \cite{pradolini2010weighted}).
\begin{lemma}[Weighted estimates for  $\mathcal{M}_{\alpha}$ ] \label{lem.weight-multi-frac-max-I}
Suppose that $0<\alpha<mn $ and $1\le p_{_{1}},p_{_{2}},\dots,p_{_{m}} < \infty$ with  $1/p =  1/p_{_{1}}  + 1/p_{_{2}} +\cdots+1/p_{_{m}}$, such that
$1/q =  1/p  - \alpha/n >0$ and $1/m <p\le q<\infty$.
\begin{enumerate}[leftmargin=2em,label=(\arabic*),itemindent=1.5em]  
\item  If   $1< p_{_{j}} < \infty$ for all $j=1,\dots,m$. Then  $\vec{w}  \in A_{\vec{P},q}$ if and only if 
    $\mathcal{M}_{\alpha}$ can be extended to a bounded operator
    \begin{align*}
  \|\mathcal{M}_{\alpha}(\vec{f})\|_{L^{q}(v_{\vec{w}}^{q})}  &\le C  \prod_{j=1}^{m}\|f_{j}\|_{L^{p_{j}}(w_{j}^{p_{j}})}.
   \end{align*}
\item  If  $1 \le p_{_{j}} < \infty$ for all $j=1,\dots,m$, and at least one $p_{_{j}} =1$ for some $j=1,\dots,m$. Then, for  $\vec{w}  \in A_{\vec{P},q}$,   there is a constant $C > 0$ independent of $\vec{f}$ such that
\begin{align*}
 \|\mathcal{M}_{\alpha}(\vec{f})\|_{L^{q,\infty}(v_{\vec{w}}^{q})}  &\le  C \prod_{j=1}^{m}\|f_{j}\|_{L^{p_{j}}(w_{j}^{p_{j}})}.
\end{align*}
\end{enumerate}
\end{lemma}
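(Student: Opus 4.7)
The plan is to adapt Moen's argument from \cite{moen2009weighted}, splitting the proof into the necessity and the sufficiency of the $A_{\vec{P},q}$ condition. The ingredients are: (a) testing on characteristic-function tuples for the necessity; (b) a per-cube H\"{o}lder decomposition combined with the $A_{\vec{P},q}$ inequality for the sufficiency; (c) the linear weight embeddings $v_{\vec{w}}^{q}\in A_{mq}$ and $w_{j}^{-p_{j}'}\in A_{mp_{j}'}$ from \cref{lem:multiple-weights}(ii); and (d) a Calder\'{o}n--Zygmund stopping argument for the weak-type endpoint in part (2).

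For the necessity in part (1), I would fix an arbitrary cube $Q$ and test the assumed strong boundedness on $f_{j}=w_{j}^{-p_{j}'}\chi_{Q}$. A direct computation gives $\|f_{j}\|_{L^{p_{j}}(w_{j}^{p_{j}})}^{p_{j}}=\int_{Q}w_{j}^{-p_{j}'}$, while for every $x\in Q$ one has the lower bound $\mathcal{M}_{\alpha}(\vec{f})(x)\ge |Q|^{\alpha/n}\prod_{j}|Q|^{-1}\int_{Q}w_{j}^{-p_{j}'}$. Substituting both sides into the hypothesized inequality and rearranging yields exactly the $A_{\vec{P},q}$ constant inequality, uniformly in $Q$.

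For the sufficiency in part (1), the starting point is the per-cube H\"{o}lder estimate
\[
|Q|^{\alpha/n}\prod_{j=1}^{m}\frac{1}{|Q|}\int_{Q}|f_{j}|\le |Q|^{\alpha/n}\prod_{j=1}^{m}\Big(\frac{1}{|Q|}\int_{Q}|f_{j}|^{p_{j}}w_{j}^{p_{j}}\Big)^{1/p_{j}}\Big(\frac{1}{|Q|}\int_{Q}w_{j}^{-p_{j}'}\Big)^{1/p_{j}'}.
\]
Using the $A_{\vec{P},q}$ condition to convert the product $|Q|^{\alpha/n}\prod_{j}(|Q|^{-1}\int_{Q}w_{j}^{-p_{j}'})^{1/p_{j}'}$ into a factor of the form $C\,(v_{\vec{w}}^{q}(Q)/|Q|)^{-1/q}$, and then passing to the supremum in $Q\ni x$, one obtains a pointwise bound of $\mathcal{M}_{\alpha}(\vec{f})$ by a weighted dyadic maximal function acting on $\prod_{j}(|f_{j}|w_{j})$-averages. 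The $L^{q}(v_{\vec{w}}^{q})$ bound then follows by combining this pointwise estimate with the single-weight $A_{mq}$-boundedness of the Hardy--Littlewood maximal operator for the weight $v_{\vec{w}}^{q}$, which is supplied by \cref{lem:multiple-weights}(ii). For part (2), I would perform a Calder\'{o}n--Zygmund decomposition of $\prod_{j}(|f_{j}|w_{j})^{p_{j}}$ at a level proportional to $\lambda^{q}$, write $f_{j}=g_{j}+b_{j}$ for each $j$, bound the good part through the strong-type estimate obtained above applied to a slightly larger exponent vector, and control the bad part via the disjointness of the stopping cubes together with the $A_{\vec{P},q}$ per-cube inequality.

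The main technical obstacle will be the sufficiency direction of part (1): passing from the uniform per-cube H\"{o}lder inequality to a genuine global $L^{q}(v_{\vec{w}}^{q})$ bound requires either a careful sharp-maximal-function comparison, in the spirit of \cref{thm.dini-CZ-sharp-max-fract} and \cref{thm:weight-multi-czo-max-fract}, or a weighted Marcinkiewicz interpolation between two judiciously chosen endpoint tuples, and ensuring that the final constant depends on the weights only through the $A_{\vec{P},q}$ constant is the delicate point. Once (1) is in hand, the weak-type estimate (2) is comparatively routine, since the stopping-cube machinery reduces it to the strong-type bound on the good part and a cube-by-cube disjointness estimate on the bad part.
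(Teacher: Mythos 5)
The paper does not prove \cref{lem.weight-multi-frac-max-I}; it cites it to Moen \cite{moen2009weighted} (see also \cite{chen2010weighted,pradolini2010weighted}), so there is no in-paper proof for your attempt to track. Judged on its own terms, your necessity computation in part (1) is clean and correct: testing on $f_{j}=w_{j}^{-p_{j}'}\chi_{Q}$, the two sides rearrange exactly into the $A_{\vec{P},q}$ constant inequality once you check that the exponent balance $\alpha/n + 1/q - 1/p = 0$ makes the residual $|Q|$ powers cancel.

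The sufficiency direction, however, contains a genuine gap which your own last paragraph acknowledges without resolving. After the per-cube H\"{o}lder step, the $A_{\vec{P},q}$ condition converts $\prod_{j}\bigl(|Q|^{-1}\int_{Q}w_{j}^{-p_{j}'}\bigr)^{1/p_{j}'}$ into $C\,\bigl(v_{\vec{w}}^{q}(Q)/|Q|\bigr)^{-1/q}$, \emph{not} the product $|Q|^{\alpha/n}\prod_{j}(\cdots)^{1/p_{j}'}$ as you wrote: the fractional factor $|Q|^{\alpha/n}$ survives. The resulting pointwise bound has the shape
\[
\mathcal{M}_{\alpha}(\vec{f})(x)\le C\sup_{Q\ni x}|Q|^{\alpha/n}\Bigl(\frac{v_{\vec{w}}^{q}(Q)}{|Q|}\Bigr)^{-1/q}\prod_{j=1}^{m}\Bigl(\frac{1}{|Q|}\int_{Q}|f_{j}|^{p_{j}}w_{j}^{p_{j}}\Bigr)^{1/p_{j}},
\]
and this is \emph{not} a Hardy--Littlewood maximal function of any single locally integrable function, so the $A_{mq}$-boundedness of $M$ for the weight $v_{\vec{w}}^{q}$ cannot simply be invoked. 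The missing idea is the reverse H\"{o}lder self-improvement of the class $A_{\vec{P},q}$: one needs that $\vec{w}\in A_{\vec{P},q}$ implies $\vec{w}\in A_{\vec{P}/r,\,q/r}$ for some $r>1$ (equivalently that one can ``power bump'' the exponents), and this is where the embeddings $w_{j}^{-p_{j}'}\in A_{mp_{j}'}$ and $v_{\vec{w}}^{q}\in A_{mq}$ from \cref{lem:multiple-weights}(ii) do actual work, supplying reverse H\"{o}lder exponents for each factor. You list these embeddings as ingredient (c) but never deploy them, and neither a ``sharp-maximal-function comparison'' (the paper's \cref{thm.dini-CZ-sharp-max-fract} compares $T_{\alpha}$ against $\mathcal{M}_{\alpha}$, not $\mathcal{M}_{\alpha}$ against something simpler) nor generic Marcinkiewicz interpolation fills this hole. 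Until the openness step is made precise, the strong-type conclusion does not follow; the rest of your outline (Vitali-type or Calder\'{o}n--Zygmund covering for the weak endpoint in part (2)) is secondary to this and standard once (1) is in place.
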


\begin{proof} {\bf of \cref{thm:dini-multi-fract-czo-weight}}
Similar to the  reason as in the proof of Theorem 1.2 in \cite{lu2014multilinear},
it is enough to prove \cref{thm:dini-multi-fract-czo-weight} is valid for
$\vec{f}$ being bounded functions with compact supports. By \labelcref{enumerate:multiple-weights-fract} in \cref{lem:multiple-weights}, for   $\vec{w}  \in A_{\vec{P},q}$ there has $ v_{\vec{w}}^{q}  \in A_{\infty} $ . Then \cref{thm:dini-multi-fract-czo-weight} follows from \cref{thm:weight-multi-czo-max-fract,lem.weight-multi-frac-max-I} (the weighted boundedness  of $\mathcal{M}_{\alpha}$ with multiple-weights).

\end{proof}

\section{Proof of \cref{thm:dini-multi-fract-czo-vari}}

The following result, an extrapolation theorem originally due to Cruz-Uribe et al. \cite{cruz2006theboundedness}, are also necessary. Here, we use the following form, see Theorem 7.2.1 in \cite{diening2011lebesgue}.
\begin{lemma}[Extrapolation theorem]\label{lem.extrapolation-thm}
Let $\mathcal{F}$ denote a family  of ordered pairs of measurable functions $(f,g)$. Suppose that for some fixed $q_{0}$ with $0< q_{0}< \infty$  and every weight $w \in A_{1}$ such that
\begin{align*}
\dint_{\mathbb{R}^{n}} |f(x)|^{q_{0}} w(x)\mathrm{d}x     &\le C_{0} \dint_{\mathbb{R}^{n}} |g(x)|^{q_{0}} w(x)\mathrm{d}x .
\end{align*}
Let $q(\cdot)\in \mathscr{P} (\mathbb{R}^{n})$ with $q_{0} \le q_{-}$. If $ (q(\cdot)/q_{0})'\in \mathscr{B}(\mathbb{R}^{n})$, then there exists a positive constant $C$, such that for all $(f,g) \in \mathcal{F}$,
\begin{align*}
\|f \|_{q(\cdot)}    &\le C \|g  \|_{q(\cdot)}.
\end{align*}
\end{lemma}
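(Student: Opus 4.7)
The plan is to reduce the variable-exponent estimate for $T_\alpha$ to the corresponding estimate for the multilinear fractional maximal function $\mathcal{M}_\alpha$ via a single application of the scalar extrapolation theorem, and then reduce the latter to known variable-exponent bounds for the linear fractional maximal operator using the pointwise product decomposition and the generalized H\"older inequality on variable Lebesgue spaces.

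First, I would invoke \cref{thm:weight-multi-czo-max-fract}(i), which asserts that for every $w\in A_\infty$ and every $q_0>n/(mn-\alpha)$ one has $\|T_\alpha(\vec f)\|_{L^{q_0}(w)}\le C\|\mathcal{M}_\alpha(\vec f)\|_{L^{q_0}(w)}$. Since $A_1\subset A_\infty$, this is precisely the hypothesis of \cref{lem.extrapolation-thm} applied to the family $\mathcal{F}=\{(T_\alpha(\vec f),\mathcal{M}_\alpha(\vec f))\}$. The constraint $q_0\le q_-$ is arranged by choosing $q_0$ slightly above $n/(mn-\alpha)$: the hypothesis $p(\cdot)\in\mathscr{P}^{\log}(\mathbb{R}^n)$ forces $p_->1$, and combined with $1/q(\cdot)=1/p(\cdot)-\alpha/n$ this gives $q_->n/(n-\alpha)\ge n/(mn-\alpha)$. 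The remaining condition $(q(\cdot)/q_0)'\in\mathscr{B}(\mathbb{R}^n)$ follows from $q(\cdot)/q_0\in\mathscr{P}^{\log}(\mathbb{R}^n)$ by \cref{lem.variable-property-B}, parts (1) and (2). The extrapolation lemma then yields
\[
\|T_\alpha(\vec f)\|_{L^{q(\cdot)}(\mathbb{R}^n)}\le C\,\|\mathcal{M}_\alpha(\vec f)\|_{L^{q(\cdot)}(\mathbb{R}^n)}.
\]

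Next I would dominate $\mathcal{M}_\alpha$ pointwise by a product of scalar fractional maximal operators. Rewriting the supremum in \cref{def.mul-max-frac} and distributing the product outside the supremum yields
\[
\mathcal{M}_\alpha(\vec f)(x)\le\prod_{j=1}^m M_{\alpha/m}(f_j)(x).
\]
Define exponents $q_j(\cdot)$ by $1/q_j(\cdot)=1/p_j(\cdot)-(\alpha/m)/n$, so that $\sum_j 1/q_j(\cdot)=1/q(\cdot)$ and each $q_j(\cdot)$ inherits the log-H\"older regularity of $p_j(\cdot)$. Applying the generalized H\"older inequality in variable exponents (\cref{lem.holder-inequality}(3)) gives
\[
\|\mathcal{M}_\alpha(\vec f)\|_{L^{q(\cdot)}}\le C\prod_{j=1}^m\|M_{\alpha/m}(f_j)\|_{L^{q_j(\cdot)}}.
\]
Each scalar factor is bounded by $C\|f_j\|_{L^{p_j(\cdot)}}$ via the Muckenhoupt--Wheeden weighted inequality for $M_{\alpha/m}$ followed by a second application of the extrapolation principle (equivalently, by the Capone--Cruz-Uribe--Fiorenza variable-exponent fractional maximal inequality, valid under the stated $\mathscr{P}^{\log}$ hypothesis). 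Chaining the three displayed inequalities yields the desired conclusion.

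The main obstacle is verifying that all auxiliary exponents produced en route lie in the admissible ranges. Concretely, I must check that the interval $(n/(mn-\alpha),q_-)$ is nonempty so that a valid $q_0$ exists, that each $q_j(\cdot)$ satisfies $1<(q_j)_-\le(q_j)_+<\infty$ and belongs to $\mathscr{P}^{\log}(\mathbb{R}^n)$, and that $(q(\cdot)/q_0)'$ is admissible for the extrapolation lemma. These checks are routine consequences of the $\mathscr{P}^{\log}$ hypothesis on $p(\cdot)$ together with the implicit constraint $p_+<n/\alpha$ (forced by $1/q(\cdot)>0$), but they must be tracked carefully because the fractional shift $-\alpha/n$ tightens the allowable range at each stage, and the second extrapolation step needs the off-diagonal version of \cref{lem.extrapolation-thm} rather than the diagonal one stated in the paper.
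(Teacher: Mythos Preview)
Your proposal does not prove the stated lemma. The statement you were given is the extrapolation theorem (\cref{lem.extrapolation-thm}), which the paper does not prove at all: it is quoted from Cruz-Uribe et al.\ and Diening et al.\ (Theorem~7.2.1 in \cite{diening2011lebesgue}) as a black-box tool. What you have written instead is a proof sketch of \cref{thm:dini-multi-fract-czo-vari}, the variable-exponent boundedness of $T_\alpha$, which \emph{uses} the extrapolation lemma rather than establishes it. A proof of the extrapolation lemma itself would require the Rubio de Francia iteration algorithm in the variable-exponent setting, and nothing in your outline touches that.

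That said, if one reads your proposal as an attempt at \cref{thm:dini-multi-fract-czo-vari}, it matches the paper's argument in structure: both invoke \cref{thm:weight-multi-czo-max-fract} to get a weighted $L^{q_0}$ inequality, extrapolate via \cref{lem.extrapolation-thm}, dominate $\mathcal{M}_\alpha$ pointwise by a product $\prod_j M_{\alpha_j}f_j$, and finish with generalized H\"older plus the Capone--Cruz-Uribe--Fiorenza bound (\cref{lem.frac-max-variable}). The only substantive difference is that the paper splits $\alpha=\alpha_1+\cdots+\alpha_m$ with each $\alpha_i$ chosen so that $(p_i)_+<n/\alpha_i$, whereas you use the uniform split $\alpha_j=\alpha/m$. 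Your choice may fail: the hypothesis $p_+<n/\alpha$ does not force $(p_j)_+<mn/\alpha$ for each $j$ individually, so \cref{lem.frac-max-variable} need not apply with $\alpha_j=\alpha/m$. The paper's flexible split avoids this. Also, your closing remark about needing an ``off-diagonal'' extrapolation is unnecessary once you invoke \cref{lem.frac-max-variable} directly, as the paper does.
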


The following result   was given in  \cite{capone2007fractional} (see Theorem 1.3  in \cite{capone2007fractional} for details).
\begin{lemma}  \label{lem.frac-max-variable}
Let $0<\alpha<n $ and $ p(\cdot), q(\cdot) \in  \mathscr{P}^{\log}(\mathbb{R}^{n})$ with $p_{+}< n/\alpha$ and  $\frac{1}{q(\cdot)}= \frac{1}{p(\cdot)} - \frac{\alpha}{n}$. Then
    \begin{align*}
  \| M_{\alpha}(f)\|_{ q(\cdot)}  &\le C   \|f\|_{ p(\cdot)}.
   \end{align*}
\end{lemma}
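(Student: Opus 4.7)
The plan is to derive this variable-exponent estimate by off-diagonal extrapolation from the classical Muckenhoupt--Wheeden weighted inequalities for the fractional maximal operator. The key observation is that, although the result is diagonal in the scaling sense (the exponents $p(\cdot)$ and $q(\cdot)$ are linked by a fixed shift $-\alpha/n$), each single weighted estimate is off-diagonal ($p_0 \ne q_0$), so an off-diagonal extrapolation theorem is the natural tool.

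First I would fix a constant pair of exponents as follows. Choose $p_0$ with $1 < p_0 < p_-$ and set $1/q_0 = 1/p_0 - \alpha/n$. The hypothesis $p_+ < n/\alpha$, combined with $p_0 < p_-\le p_+$, guarantees $p_0 < n/\alpha$, so that $q_0 > 0$; moreover the shift relation forces $q_0 < q_-$. The classical Muckenhoupt--Wheeden theorem then gives, for every weight satisfying $w \in A_{p_0, q_0}$,
$$\|M_\alpha f\|_{L^{q_0}(w^{q_0})} \le C_{w}\,\|f\|_{L^{p_0}(w^{p_0})}.$$
This is a standard result whose proof uses a good-$\lambda$ inequality comparing $M_\alpha f$ to the Hardy--Littlewood maximal function $Mf$ against the weight $w^{q_0} \in A_\infty$.

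Next I would invoke the off-diagonal Rubio de Francia extrapolation theorem in the variable exponent setting (due to Cruz-Uribe--Fiorenza--Martell--P\'erez; see also Chapter 7 of Diening et al.): if a (sub)linear operator satisfies the above weighted estimate for some fixed pair $(p_0, q_0)$ and every $w \in A_{p_0, q_0}$, and if $p(\cdot), q(\cdot) \in \mathscr{P}^{\log}(\mathbb{R}^n)$ satisfy $1/q(\cdot) - 1/p(\cdot) = 1/q_0 - 1/p_0 = -\alpha/n$ together with $\bigl(p(\cdot)/p_0\bigr)', \bigl(q(\cdot)/q_0\bigr)' \in \mathscr{B}(\mathbb{R}^n)$, then the operator extends boundedly from $L^{p(\cdot)}$ to $L^{q(\cdot)}$. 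Verifying the two membership conditions is routine from \cref{lem.variable-property-B}: the choice $p_0 < p_-$ gives $p(\cdot)/p_0 \in \mathscr{P}(\mathbb{R}^n)$, and the condition $p_+ < n/\alpha$ forces $q_+ < \infty$ and $q_0 < q_-$, so $q(\cdot)/q_0 \in \mathscr{P}(\mathbb{R}^n)$; the $\log$-H\"older continuity is inherited by $p(\cdot)/p_0$ and $q(\cdot)/q_0$, which puts both exponents in $\mathscr{B}(\mathbb{R}^n)$, hence so do their conjugate duals. Applying the extrapolation result to $T = M_\alpha$ finishes the proof.

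The principal obstacle is the off-diagonal extrapolation step itself, since unlike the diagonal case it requires a Rubio de Francia iteration that simultaneously tracks two exponents through the relation $1/q_0 - 1/p_0 = -\alpha/n$; the heart of that argument is a construction of the iteration algorithm for the pair $(p(\cdot), q(\cdot))$ subordinated to the reflexivity and maximal function bounds in Lemma \ref{lem.variable-property-B}. If one prefers to avoid extrapolation altogether, an alternative is the direct Capone--Cruz-Uribe--Fiorenza strategy: split $f = f \chi_{\{|f|>1\}} + f \chi_{\{|f|\le 1\}}$ into high and low parts so each lies in a constant-exponent space $L^{p_-}$ or $L^{p_+}$, apply a Hedberg-type pointwise bound $M_\alpha h(x) \le C\|h\|_{p_0}^{\,p_0\alpha/n}(Mh(x))^{1-p_0\alpha/n}$ to each piece, and then close the modular estimate $\int(M_\alpha f(x))^{q(x)}\mathrm{d}x$ using the boundedness of $M$ on $L^{q(\cdot)}$ (again from \cref{lem.variable-property-B}) together with the local and asymptotic $\log$-H\"older bounds on $p(\cdot)$ and $q(\cdot)$.
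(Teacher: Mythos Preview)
Your proposal is correct, but note that the paper does not actually prove this lemma: it simply quotes it as Theorem~1.3 of Capone--Cruz-Uribe--Fiorenza, and the sentence following the lemma is only a remark about the classical constant-exponent Hedberg inequality $M_{\alpha}f(x)\le \|f\|_{L^{p}}^{1-p/q}(Mf(x))^{p/q}$, not a proof in the variable-exponent setting. Your primary route---off-diagonal extrapolation from the Muckenhoupt--Wheeden $A_{p_0,q_0}$ weighted bound for $M_\alpha$---is a genuinely different argument from the one in the cited reference; it trades the direct modular computation for the machinery of Rubio de Francia iteration, with the advantage that once the off-diagonal extrapolation theorem is in hand the proof is essentially a one-line application plus the routine exponent checks you indicate (and those checks are indeed handled by \cref{lem.variable-property-B}). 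The alternative you sketch at the end---splitting $f$ at level~$1$, applying the pointwise Hedberg bound to each piece, and closing the modular via boundedness of $M$ on $L^{q(\cdot)}$---is precisely the strategy of the Capone--Cruz-Uribe--Fiorenza paper that the authors cite, so in that sense your second option aligns with the paper's intended reference while your first option provides an independent and arguably cleaner proof.
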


In the setting of classical Lebesgue spaces, \cref{lem.frac-max-variable} follows immediately from the boundedness of the Hardy-Littlewood maximal operator. In fact, using H\"{o}lder's inequality it is straightforward to show that
\begin{align*}
M_{\alpha}(f)(x)       &\le \| f\|_{L^{p}(\mathbb{R}^{n})}^{1-p/q} M(f)(x)^{p/q} , \ \ x\in  \Omega.
\end{align*}

We also need the following density property ( see Theorem 3.4.12 in \cite{diening2011lebesgue}).
\begin{lemma}  \label{lem.density-property}
If $  q(\cdot) \in  \mathscr{P} (\mathbb{R}^{n})$, then   $ C_{c}^{\infty}(\mathbb{R}^{n})$  is dense in $L^{q(\cdot)}(\mathbb{R}^{n})$.
\end{lemma}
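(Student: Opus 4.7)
The plan is to establish density through the classical three-step chain: approximate $f\in L^{q(\cdot)}(\mathbb{R}^n)$ first by a bounded function of compact support, then by a continuous function of compact support (Lusin), and finally by a $C_c^\infty$ function via mollification. Throughout I would work with the modular $F_q(\cdot)$ rather than the Luxemburg norm directly, exploiting the standard fact that for $q(\cdot)\in\mathscr{P}(\mathbb{R}^n)$, i.e.\ $1<q_-\le q_+<\infty$, the modular convergence $F_q(f_k-f)\to 0$ is equivalent to norm convergence $\|f_k-f\|_{q(\cdot)}\to 0$.

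For the first step, given $f\in L^{q(\cdot)}(\mathbb{R}^n)$ I would set $f_N(x)=f(x)\chi_{E_N}(x)$ with $E_N=B(0,N)\cap\{|f|\le N\}$. Then $|f-f_N|^{q(x)}\le |f(x)|^{q(x)}$ pointwise and $|f-f_N|^{q(x)}\to 0$ a.e.; since $F_q(f)<\infty$, Lebesgue's dominated convergence theorem gives $F_q(f-f_N)\to 0$, hence $\|f-f_N\|_{q(\cdot)}\to 0$. This reduces the problem to approximating a bounded function supported in some ball $B(0,R)$ with $|f|\le M$.

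For the second step, Lusin's theorem on the finite-measure set $B(0,R+1)$ yields, for each $k$, a function $g_k\in C_c(\mathbb{R}^n)$ supported in $B(0,R+1)$ with $\|g_k\|_\infty\le M$ and $|\{f\ne g_k\}|<1/k$. Then $|f-g_k|^{q(x)}\le (2M)^{q_+}\chi_{\{f\ne g_k\}}$ is uniformly bounded and supported on a shrinking set, so again $F_q(f-g_k)\to 0$. For the third step, given $g\in C_c(\mathbb{R}^n)$ with $\operatorname{supp} g\subset K$, I take a standard mollifier $\phi\in C_c^\infty(\mathbb{R}^n)$ with $\int\phi=1$ and $\phi_\varepsilon(x)=\varepsilon^{-n}\phi(x/\varepsilon)$; then $g_\varepsilon:=g*\phi_\varepsilon\in C_c^\infty(\mathbb{R}^n)$, all supported in the fixed compact set $K_1:=K+\overline{B(0,1)}$. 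Uniform continuity of $g$ forces $\|g-g_\varepsilon\|_\infty\to 0$, so
\[
F_q(g-g_\varepsilon)=\int_{K_1}|g-g_\varepsilon|^{q(x)}\,\mathrm{d}x\le |K_1|\cdot\max\{\|g-g_\varepsilon\|_\infty^{q_-},\|g-g_\varepsilon\|_\infty^{q_+}\}\to 0,
\]
yielding $\|g-g_\varepsilon\|_{q(\cdot)}\to 0$. A diagonal selection through the three approximations then produces the desired $C_c^\infty$ sequence.

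The only subtlety, and the place I would be careful, is the passage from modular to norm convergence: this equivalence genuinely requires both $q_-\ge 1$ and $q_+<\infty$, both guaranteed by the hypothesis $q(\cdot)\in\mathscr{P}(\mathbb{R}^n)$. In particular I would not attempt to invoke boundedness of the mollifier operator on $L^{q(\cdot)}$ (which would require additional log-Hölder regularity) — that is unnecessary here, because mollification is applied only to the already compactly supported continuous approximant $g$, where uniform convergence on a fixed compact set is the crucial ingredient and gives modular convergence for free.
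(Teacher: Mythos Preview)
Your argument is correct: the three-step reduction (truncation, Lusin, mollification) together with the equivalence of modular and norm convergence when $q_{+}<\infty$ gives a clean self-contained proof. One minor clarification you might add is that $f\in L^{q(\cdot)}$ together with $q_{+}<\infty$ already forces $F_{q}(f)<\infty$ (via $F_{q}(f)\le \max\{1,\|f\|_{q(\cdot)}^{q_{+}}\}\,F_{q}(f/\|f\|_{q(\cdot)})$), so the dominated convergence in Step~1 is justified; otherwise the logic is sound.

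However, the paper itself does not prove this lemma at all: it simply records it as a known density result and cites Theorem~3.4.12 in Diening et~al., \emph{Lebesgue and Sobolev Spaces with Variable Exponents}. So your proposal is not a different route to the paper's proof but rather a full proof where the paper gives only a reference. What your approach buys is self-containment and transparency about exactly which hypotheses ($q_{+}<\infty$ for the modular--norm equivalence; no log-H\"older regularity needed since mollification is applied only after reducing to $C_{c}$) are actually used; what citing the reference buys is brevity and access to the slightly more general statement in that book, which in fact covers exponents in $\mathscr{P}_{1}(\mathbb{R}^{n})$ as well.
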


Now, we have all the ingredients to prove \cref{thm:dini-multi-fract-czo-vari}.
\begin{proof} {\bf of \cref{thm:dini-multi-fract-czo-vari}}
 Since $ q(\cdot) \in \mathscr{B}(\mathbb{R}^{n})$ then, by \cref{lem.variable-property-B}, there exists a $q_{0}$ with $1 < q_{0} <q_{-}$ such that $ (q(\cdot)/q_{0})'\in \mathscr{B}(\mathbb{R}^{n})$. On the other hand, by \cref{thm:weight-multi-czo-max-fract} we see that, for this $q_{0}$ and any $w \in A_{1}$,
\begin{align*}
\dint_{\mathbb{R}^{n}} |T_{\alpha}(\vec{f})(x)|^{q_{0}} w(x)\mathrm{d}x     &\le C  \dint_{\mathbb{R}^{n}} |\mathcal{M}_{\alpha}(\vec{f})(x)|^{q_{0}} w(x)\mathrm{d}x
\end{align*}
holds for   all $m$-tuples $\vec{f} = ( f_{_{1}},f_{_{2}},\dots,f_{_{m}})$  of bounded functions with compact support.

Apply \cref{lem.extrapolation-thm}  to the pair  $(T_{\alpha}(\vec{f}),\mathcal{M}_{\alpha}(\vec{f}))$ and obtain
\begin{align}\label{inequ:multi-maximal-czo-fract}
 \|T_{\alpha}(\vec{f})\|_{q(\cdot)}    &\le C \|\mathcal{M}_{\alpha}(\vec{f})  \|_{q(\cdot)}.
\end{align}

Let $0< \alpha_{1},\dots, \alpha_{m}<n$ with $\alpha = \alpha_{1}+\cdots+ \alpha_{m}$ and $p_{i}(\cdot)<n/\alpha_{i}$ such that $\frac{1}{q_{i}(\cdot)}= \frac{1}{p_{i}(\cdot)} - \frac{\alpha_{i}}{n}~(i=1,\dots,m)$ and $\frac{1}{q(\cdot)} = \frac{1}{q_{1}(\cdot)}  +\cdots+ \frac{1}{q_{m}(\cdot)}$.
By \cref{def.mul-max-frac},  it is easy to see that (see P.89 in \cite{moen2009linear})
\begin{align*}
 \mathcal{M}_{\alpha}(\vec{f})(x) &\le \prod_{i=1}^{m} M_{\alpha_{i}} f_{i}(x)   \ \ \text{for} \ x\in \mathbb{R}^{n}.
\end{align*}
This, together with \eqref{inequ:multi-maximal-czo-fract}, \cref{lem.frac-max-variable} and the generalized H\"{o}lder's inequality (see \cref{lem.holder-inequality}), yields
\begin{align*}
 \|T_{\alpha}(\vec{f})\|_{q(\cdot)}    &\le C \prod_{i=1}^{m} \|M_{\alpha_{i}} f_{i}  \|_{q_{i}(\cdot)}\le C \prod_{i=1}^{m} \| f_{i}  \|_{p_{i}(\cdot)}.
\end{align*}

Now, we have showed that \cref{thm:dini-multi-fract-czo-vari} is valid for all bounded functions $  f_{_{1}},f_{_{2}},\dots,f_{_{m}} $ with compact support. \cref{lem.density-property} concludes the proof of \cref{thm:dini-multi-fract-czo-vari}.

\end{proof}

 \subsubsection*{Funding information:} \noindent
 This work is  financially supported by the Science and Technology Fund of Heilongjiang  (No.2019-KYYWF-0909),  the NNSF of China (No.11571160), the Reform and Development Foundation for Local Colleges and Universities of the Central Government (No.2020YQ07) and the Scientific Research Fund of MNU (No.D211220637). 

 \subsubsection*{Conflict of interest: } \noindent
The authors state that there is no conflict of interest.   

 \subsubsection*{Data availability statement:} \noindent
  All data generated or analysed during this study are included in this manuscript. 
 \subsubsection*{Author contributions:}\noindent
 All authors contributed equally to   this work. 
 All authors read the final manuscript and approved its submission.


\phantomsection
\addcontentsline{toc}{section}{References}
\bibliographystyle{tugboat}          
\addtolength{\itemsep}{-0.5 em} 
\bibliography{wu-reference} 

\begin{thebibliography}{10}

\bibitem{acerbi2005gradient}
E.~Acerbi, G.~Mingione.
\newblock Gradient estimates for the $p (x)$-{Laplacean} system.
\newblock \emph{Journal f{\"u}r die Reine und Angewandte Mathematik}
  2005(584):117--148, 2005.

\bibitem{bernardis2006weighted}
A.L.~Bernardis, S.~Hartzstein, G.G.~Pradolini.
\newblock Weighted inequalities for commutators of fractional integrals on
  spaces of homogeneous type.
\newblock \emph{Journal of Mathematical Analysis and Applications}
  322(2):825--846, 2006.

\bibitem{bernardis2010composition}
A.L.~Bernardis, G.G.~Pradolini, et~al.
\newblock Composition of fractional {Orlicz} maximal operators and {$A_{
  1}$}-weights on spaces of homogeneous type.
\newblock \emph{Acta Mathematica Sinica, English Series} 26(8):1509--1518,
  2010.

\bibitem{capone2007fractional}
C.~Capone, D.~Cruz-Uribe, A.~Fiorenza.
\newblock The fractional maximal operator and fractional integrals on variable
  ${L}^{p}$ spaces.
\newblock \emph{Revista Matem{\'a}tica Iberoamericana} 23(3):743--770, 2007.

\bibitem{chen2010weighted}
X.~Chen, Q.~Xue.
\newblock Weighted estimates for a class of multilinear fractional type
  operators.
\newblock \emph{Journal of Mathematical Analysis and Applications}
  362(2):355--373, 2010.

\bibitem{coifman1975commutators}
R.~Coifman, Y.~Meyer.
\newblock On commutators of singular integrals and bilinear singular integrals.
\newblock \emph{Transactions of the American Mathematical Society}
  212:315--331, 1975.

\bibitem{coifman1978commutateurs}
R.~Coifman, Y.~Meyer.
\newblock Commutateurs d'int{\'e}grales singuli{\`e}res et op{\'e}rateurs
  multilin{\'e}aires.
\newblock \emph{Annales de l'Institut Fourier} 28(3):177--202, 1978.
\newblock \url{https://doi.org/10.5802/aif.708}

\bibitem{cruz2013variable}
D.~Cruz-Uribe, A.~Fiorenza.
\newblock \emph{Variable {Lebesgue} spaces: foundations and harmonic analysis}.
\newblock Springer Science \& Business Media, Heidelberg, 2013.

\bibitem{cruz2006theboundedness}
D.~Cruz-Uribe, A.~Fiorenza, et~al.
\newblock The boundedness of classical operators on variable ${L^{p}}$ spaces.
\newblock \emph{Annales Academi{\ae} Scientiarum Fennic{\ae} Mathematica}
  31(1):239--264, 2006.

\bibitem{cruz2003maximal}
D.~Cruz-Uribe, A.~Fiorenza, C.J.~Neugebauer.
\newblock The maximal function on variable ${L^{p}}$ spaces.
\newblock \emph{Annales Academi{\ae} Scientiarum Fennic{\ae} Mathematica}
  28(1):223--238, 2003.

\bibitem{cruz2019multilinear}
D.~Cruz-Uribe, K.~Moen, H.~Van~Nguyen.
\newblock Multilinear fractional {C}alder\'{o}n-{Z}ygmund operators on weighted
  {Hardy} spaces.
\newblock \emph{Houston Journal of Mathematics} 45(3):853--871, 2019.

\bibitem{cruz2014variable}
D.~Cruz-Uribe, L.A.D.~Wang.
\newblock Variable {H}ardy spaces.
\newblock \emph{Indiana University Mathematics Journal} 63(2):447--493, 2014.

\bibitem{dalmasso2018effect}
E.~Dalmasso, G.~Pradolini, W.~Ramos.
\newblock The effect of the smoothness of fractional type operators over their
  commutators with {Lipschitz} symbols on weighted spaces.
\newblock \emph{Fractional Calculus and Applied Analysis} 21(3):628--653, 2018.

\bibitem{diening2005maximalf}
L.~Diening.
\newblock Maximal function on {M}usielak-{O}rlicz spaces and generalized
  {Lebesgue} spaces.
\newblock \emph{Bulletin des Sciences Math{\'e}matiques} 129(8):657--700, 2005.

\bibitem{diening2011lebesgue}
L.~Diening, P.~Harjulehto, et~al.
\newblock \emph{Lebesgue and {Sobolev} spaces with variable exponents}, vol.
  2017 of \emph{Lecture Notes in Mathematics}.
\newblock Springer-Verlag, Berlin,Heidelberg, 2011.

\bibitem{diening2003calderon}
L.~Diening, M.~R\r{u}\u{z}i\u{c}ka.
\newblock Calder{\'o}n-{Z}ygmund operators on generalized {Lebesgue} spaces
  {$L^{p(\cdot)}$} and problems related to fluid dynamics.
\newblock \emph{Journal f\"{u}r die Reine und Angewandte Mathematik}
  563:197--220, 2003.

\bibitem{fefferman1972h}
C.~Fefferman, E.M.~Stein.
\newblock {$H^{p}$} spaces of several variables.
\newblock \emph{Acta mathematica} 129:137--193, 1972.

\bibitem{garcia1985weighted}
J.~Garc{\'\i}a-Cuerva, J.L.R.~De~Francia.
\newblock \emph{Weighted norm inequalities and related topics}, vol. 116 of
  \emph{North-Holland Mathematics Studies}.
\newblock Elsevier Science Publishers B.V., Netherlands, 1985.

\bibitem{grafakos1992multilinear}
L.~Grafakos.
\newblock On multilinear fractional integrals.
\newblock \emph{Studia Mathematica} 102(1):49--56, 1992.

\bibitem{grafakos2014classical}
L.~Grafakos.
\newblock \emph{Classical {Fourier} {Analysis}}, vol. 249 of \emph{Graduate
  Texts in Mathematics}.
\newblock Springer-Verlag, New York, third ed., 2014.

\bibitem{grafakos2014multilinear}
L.~Grafakos, D.~He.
\newblock Multilinear {Calder{\'o}n--Zygmund} operators on {Hardy} spaces,
  {II}.
\newblock \emph{Journal of Mathematical Analysis and Applications}
  416(2):511--521, 2014.

\bibitem{grafakos2014multilineara}
L.~Grafakos, L.~Liu, et~al.
\newblock \emph{Multilinear analysis on metric spaces}.
\newblock Citeseer, 2014.

\bibitem{grafakos2002multilinear}
L.~Grafakos, R.H.~Torres.
\newblock Multilinear {Calder{\'o}n--Zygmund} theory.
\newblock \emph{Advances in Mathematics} 165(1):124--164, 2002.

\bibitem{harjulehto2008minimizers}
P.~Harjulehto, P.~H{\"a}st{\"o}, V.~Latvala.
\newblock Minimizers of the variable exponent, non-uniformly convex {Dirichlet}
  energy.
\newblock \emph{Journal de Math{\'e}matiques Pures et Appliqu{\'e}es}
  89(2):174--197, 2008.

\bibitem{journe1983calderon}
J.L.~Journ{\'e}.
\newblock \emph{Calder{\'o}n-{Zygmund Operators}, {Pseudo-Differential
  Operators} and the {Cauchy Integral} of {Calder{\'o}n}}, vol. 994 of
  \emph{Lecture Notes in Mathematics}.
\newblock Springer-Verlag, Berlin, Heidelberg, 1983.

\bibitem{kenig1999multilinear}
C.E.~Kenig, E.M.~Stein.
\newblock Multilinear estimates and fractional integration.
\newblock \emph{Mathematical Research Letters} 6(1):1--15, 1999.

\bibitem{kovavcik1991spaces}
O.~Kov{\'a}{\v{c}}ik, J.~R{\'a}kosn{\'\i}k.
\newblock On spaces ${L^{p(x)}}$ and ${W^{k,p(x)}}$.
\newblock \emph{Czechoslovak Mathematical Journal} 41(4):592--618, 1991.

\bibitem{lerner2009new}
A.K.~Lerner, S.~Ombrosi, et~al.
\newblock New maximal functions and multiple weights for the multilinear
  {Calder{\'o}n--Zygmund} theory.
\newblock \emph{Advances in Mathematics} 220(4):1222--1264, 2009.

\bibitem{lin2007boundedness}
Y.~Lin, S.~Lu.
\newblock Boundedness of multilinear singular integral oprators on {Hard}y and
  {Herz}-type spaces.
\newblock \emph{Hokkaido Mathematical Journal} 36(3):585--613, 2007.

\bibitem{lu2014multilinear}
G.~Lu, P.~Zhang.
\newblock Multilinear {Calder\'{o}n-Zygmund} operators with kernels of {Dini's}
  type and applications.
\newblock \emph{Nonlinear Analysis: Theory,Methods and Applications}
  107:92--117, 2014.

\bibitem{maldonado2009weighted}
D.~Maldonado, V.~Naibo.
\newblock Weighted norm inequalities for paraproducts and bilinear
  pseudodifferential operators with mild regularity.
\newblock \emph{Journal of Fourier Analysis and Applications} 15(2):218--261,
  2009.
\newblock \url{https://doi.org/10.1007/s00041-008-9029-x}

\bibitem{moen2009linear}
K.~Moen.
\newblock \emph{Linear and multilinear fractional operators: weighted
  inequalities, sharp bounds, and other properties}.
\newblock Ph.D. thesis, University of Kansas, 2009.

\bibitem{moen2009weighted}
K.~Moen.
\newblock Weighted inequalities for multilinear fractional integral operators.
\newblock \emph{Collectanea Mathematica} 60(2):213--238, 2009.

\bibitem{nekvinda2004hardy}
A.~Nekvinda.
\newblock Hardy-{L}ittlewood maximal operator on ${L}^{p(x)}(\mathbb{R}^{n})$.
\newblock \emph{Mathematical Inequalities \& Applications} 7:255--266, 2004.

\bibitem{orlicz1931konjugierte}
W.~Orlicz.
\newblock {\"U}ber konjugierte exponentenfolgen.
\newblock \emph{Studia Mathematica} 3(1):200--211, 1931.

\bibitem{perez1995endpoint}
C.~P{\'e}rez.
\newblock Endpoint estimates for commutators of singular integral operators.
\newblock \emph{Journal of Functional Analysis} 128(1):163--185, 1995.

\bibitem{perez2002sharp}
C.~P{\'e}rez, R.~Trujillo-Gonz\'{a}lez.
\newblock Sharp weighted estimates for multilinear commutators.
\newblock \emph{Journal of the London Mathematical Society} 65(3):672--692,
  2002.

\bibitem{pradolini2010weighted}
G.~Pradolini.
\newblock Weighted inequalities and pointwise estimates for the multilinear
  fractional integral and maximal operators.
\newblock \emph{Journal of Mathematical Analysis and Applications}
  367(2):640--656, 2010.

\bibitem{sanchon2009entropy}
M.~Sanch{\'o}n, J.M.~Urbano.
\newblock Entropy solutions for the $p(x)$-{Laplace} equation.
\newblock \emph{Transactions of the American Mathematical Society}
  361(12):6387--6405, 2009.

\bibitem{stein1993harmonic}
E.M.~Stein.
\newblock \emph{Harmonic Analysis: Real-variable Methods, Orthogonality, and
  Oscillatory Integrals}.
\newblock Princeton University Press, Princeton, 1993.

\bibitem{wang2019multilinear}
H.~Wang, J.~Xu.
\newblock Multilinear fractional integral operators on central {Morrey} spaces
  with variable exponent.
\newblock \emph{Journal of Inequalities and Applications} 2019(1):1--23, 2019.

\bibitem{xue2013weighted}
Q.~Xue.
\newblock Weighted estimates for the iterated commutators of multilinear
  maximal and fractional type operators.
\newblock \emph{Studia Mathematica} 217(2):97--122, 2013.

\bibitem{yabuta1985generalizations}
K.~Yabuta.
\newblock Generalizations of {Calder{\'o}n-Zygmund} operators.
\newblock \emph{Studia Mathematica} 1(82):17--31, 1985.

\end{thebibliography}

\end{document}